\newcommand{\sumprime}{\if@display\sideset{}{'}\sum%
            \else\sum'\fi}
\DeclareRobustCommand{\intprod}{%
  \mathbin{\mathpalette\int@prod{(0.1,0)(0.9,0)(0.9,0.8)}}%
}
\DeclareRobustCommand{\intprodr}{%
  \mathbin{\mathpalette\int@prod{(0.1,0.8)(0.1,0)(0.9,0)}}}
\newcommand{\int@prod}[2]{%
  \begingroup
  \sbox\z@{$\m@th#1+$}%
  \setlength\unitlength{\wd\z@}%
  \begin{picture}(1,1)
  \roundcap
  \polyline#2
  \end{picture}%
 \endgroup
}
\begin{document}

\numberwithin{equation}{section}

\newtheorem{theorem}{Theorem}[section]
\newtheorem{proposition}[theorem]{Proposition}
\newtheorem{conjecture}[theorem]{Conjecture}
\def\theconjecture{\unskip}
\newtheorem{corollary}[theorem]{Corollary}
\newtheorem{lemma}[theorem]{Lemma}
\newtheorem{observation}[theorem]{Observation}
\newtheorem{definition}{Definition}
\numberwithin{definition}{section} 
\def\theremark{\unskip}
\newtheorem{kl}{Key Lemma}
\def\thekl{\unskip}
\newtheorem{question}{Question}
\def\thequestion{\unskip}
\newtheorem{example}{Example}
\def\theexample{\unskip}
\newtheorem{problem}{Problem}

\newtheoremstyle{mythm1}{2ex plus 1ex minus .2ex}{2ex plus 1ex minus .2ex}     {\normalfont}{}{\bfseries}{.}{0.7em}{} 
\theoremstyle{mythm1}
\newtheorem*{remark}{Remark}

\thanks{The first author is supported by National Natural Sciense Foundation of China, No. 12271101. The second author is supported by China Postdoctoral Science Foundation, No. 2024M750487.}

\address{School of Mathematical Sciences, Fudan University, Shanghai, 20043, China}

\email{boychen@fudan.edu.cn}

\address{School of Mathematical Sciences, Fudan University, Shanghai, 200433, China}

\email{ypxiong@fudan.edu.cn}

\title[Real Variable Things in Bergman Theory]{Real Variable Things in Bergman Theory}

\author{Bo-Yong Chen and Yuanpu Xiong}
\date{}

\maketitle

\begin{center}
\emph{In memory of Joseph John Kohn}
\end{center}

\begin{abstract}
 In this article,  we investigate the connection between certain real variable things and the Bergman theory.  We first use Hardy-type inequalities to give an $L^2$ Hartogs-type extension theorem and an $L^p$ integrability theorem for the Bergman kernel $K_\Omega(\cdot,w)$.  We then use the Sobolev-Morrey inequality to show the absolute continuity of  Bergman kernels on planar domains with respect to logarithmic capacities.  Finally,  we give  lower bounds of the minimum $\kappa(\Omega)$ of the Bergman kernel $K_\Omega(z)$   in terms of the interior capacity radius for planar domains and the volume density for bounded pseudoconvex domains in $\mathbb C^n$.   As a consequence,  we show that $\kappa(\Omega)\ge c_0 \lambda_1(\Omega)$ holds on planar domains,  where $c_0$ is a numerical constant and  $\lambda_1(\Omega)$ is the first Dirichlet eigenvalue of $-\Delta$. 
\newline
\newline
\noindent Keywords: Bergman kernel, Hardy inequality, Sobolev-Morrey inequality, the first eigenvalue, capacity
\newline
\newline
\noindent 2020 Mathematics Subject Classification: 32A25, 35P15, 30C85.
\end{abstract}

\section{Introduction}

It is well-known that complex analysis can be used to give beautiful  proofs of real theorems (see e.g.,  \cite{LaxZalcman}).    On the other hand,  real methods,  e.g.,  the $L^2$ theory of the Cauchy-Riemann equation pioneered by Kohn and H\"ormander,  provide powerful tools for complex analysis.  The present article may be viewed as a continuation of this tradition.  

One of the landmark result in several complex variables is the Hartogs extension theorem, i.e.,  
if $\Omega$ is a domain in $\mathbb C^n$ with $n\ge 2$ and $E$ a compact subset in $\Omega$ such that $\Omega\backslash E$ is connected,  then every holomorphic function on $\Omega\backslash E$ can be extended holomorphically to $\Omega$.  Here we would like to give an alternative proof of this theorem based on the following  classical Hardy inequality 
\begin{equation}\label{eq:Hardy_1}
\frac{N-2}4 \int_{\mathbb R^N} \frac{|\phi|^2}{|x|^2} \le \int_{\mathbb R^N}  |\nabla \phi|^2,\ \ \ \forall\,\phi\in C^\infty_0(\mathbb R^N),\ N\ge 3.
\end{equation}
Since the proof of \eqref{eq:Hardy_1} uses essentially the fact that $\mathbb R^N$ is hyperbolic for $N\ge 3$ (note that $-|x|^{2-N}$ provides a negative subharmonic function on $\mathbb R^N$),    it turns out that the basic reason for the Hartogs extension phenomenon is the hyperbolicity of $\mathbb C^n$  when $n\ge 2$.  

Actually,  we can prove the following $L^2$ Hartogs-type theorem whenever $E$ is not necessarily compact.   

\begin{theorem}\label{th:Hartogs_L2}
Let $\Omega$ be a domain in $\mathbb C^n$ ($n\geq2$) and $E$ a closed set in $\mathbb C^n$ such that
\begin{enumerate}
\item[$(1)$] there exists $r>0$ such that $E_r:=\{z\in \mathbb C^n: d(z,E)\le r\}\subset \Omega$;
\item[$(2)$] there exist an affine-linear subspace $H\subset \mathbb R^{2n}=\mathbb C^n$ of real codimension $\ge 3$ and a number $R>0$ such that 
$$
E\subset H_R:=\{z\in \mathbb C^n: d(z,H)<R\};
$$
\item[$(3)$] $\Omega\backslash E$ is connected.
\end{enumerate}
Then every $L^2$ holomorphic function defined on $\Omega\backslash E$ can be extended to an $L^2$ holomorphic function on $\Omega$.  In other words,  the Bergman space $A^2(\Omega\setminus E)$ of\/ $\Omega\setminus E$ coincides with $A^2(\Omega)$. 
\end{theorem}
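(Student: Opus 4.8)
The strategy is to show that any $f\in A^2(\Omega\setminus E)$ has no "obstruction" to extending across $E$, using the Hardy inequality \eqref{eq:Hardy_1} exactly as sketched in the introduction for the classical case. First I would reduce to the case where $f$ is bounded near the boundary of a slightly shrunk neighborhood: fix $0<r'<r$, and work inside $\Omega':=E_{r'}\cup(\Omega\setminus E)$, which is still a domain containing $E$ with a collar of width $r-r'$ around $E$ inside $\Omega$. On $\Omega'\setminus E$ the function $f$ is holomorphic and $L^2$; I want to produce $\tilde f\in A^2(\Omega')$ agreeing with $f$ off $E$, and then note $\Omega'$ and $\Omega$ share the same function theory near $E$ so the extension propagates to all of $\Omega$ by the connectedness hypothesis (3) together with the identity theorem.

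The heart of the matter is the local extension, and here is where hypothesis (2) enters. Let $d_H(z):=d(z,H)$, a Lipschitz function that is (away from $H$) essentially $|x''|$ in coordinates splitting $\mathbb R^{2n}=H\times \mathbb R^{N}$ with $N\ge 3$ the codimension. The plan is: pick a cutoff $\chi\in C^\infty_0(\Omega')$ with $\chi\equiv 1$ on $E_{r'}$ and $\mathrm{supp}\,\nabla\chi\subset E_r\setminus E_{r'}$, so that $(1-\chi)f$ extends by $0$ across $E$ to a smooth (not holomorphic) function on $\Omega'$, with $\dbar\big((1-\chi)f\big)=-f\,\dbar\chi=:g$, a $\dbar$-closed $(0,1)$-form supported in the collar, hence in particular away from $E$ and with $L^2$ (indeed bounded, compactly supported) coefficients. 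One then solves $\dbar u=g$ on $\Omega'$ with the weight $e^{-\varphi}$, where $\varphi$ is chosen to blow up along $H$: concretely $\varphi=(N-2)\log d_H$ on the region $\{d_H<R'\}$ for suitable $R>R'$, suitably regularized and patched to be globally defined and bounded away from $H$. The point of \eqref{eq:Hardy_1}, applied in the $\mathbb R^N$-slices, is precisely that this weight is admissible: it forces any $L^2(e^{-\varphi})$ solution $u$ to vanish on $H$ (since $\int |u|^2 d_H^{-N}<\infty$ near $H$ is impossible for a nonzero holomorphic function — here one uses that $u$ is holomorphic on the region where $g=0$, in particular on a neighborhood of $E\supset$ the relevant part, via interior regularity of $\dbar$). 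Then $\tilde f:=(1-\chi)f - u$ is holomorphic on $\Omega'$, lies in $A^2$ because $u\in L^2$ and $(1-\chi)f\in L^2$, and on $\Omega'\setminus E_r$ it equals $f-u$; to see $\tilde f = f$ off $E$ I would argue that $u$ is holomorphic on $\Omega'\setminus(\mathrm{supp}\,\dbar\chi)$ and vanishes on $H\cap\{d_H<R'\}$, hence vanishes identically on the connected component containing $H$, which by hypothesis (2) is the component meeting $E$ — so $u\equiv 0$ near $E$ and $\tilde f$ extends $f$.

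A cleaner packaging of the same idea avoids cutoffs: restrict attention to a single affine slice $H^\perp\cap\Omega$-direction. Since $E\subset H_R$ and $H_R$ retracts onto $H$, and $H$ has real codimension $\ge 3$, the set $H$ is negligible for $A^2$: a Hardy-type inequality shows that if $h$ is holomorphic and $L^2$ on a ball minus an affine subspace of codimension $\ge 3$, then $h$ extends — this is really \eqref{eq:Hardy_1} dualized, saying $H$ has vanishing "$A^2$-capacity." The extension is thus reduced to: (i) extend $f$ across the "thin" part of $E$ using that $E$ is swallowed by the codimension-$\ge 3$ set $H_R$, which is the content of a Hartogs-with-$L^2$-bound across subvarieties-of-high-codimension statement proven via \eqref{eq:Hardy_1}; (ii) use (1) to guarantee the extension lands inside $\Omega$ with room to spare, so no boundary issues arise; (iii) use (3) to glue. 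The main obstacle I anticipate is step (i) done carefully: one must handle that $E$ need not be a manifold, only that it sits inside the tubular neighborhood $H_R$ of an honest affine subspace, so the Hardy inequality has to be applied on the product structure $\mathbb R^{2n-N}\times\mathbb R^N$ uniformly in the first factor, integrating the slice-wise estimate $\int_{\mathbb R^N}|\phi|^2|x''|^{-2}\lesssim \int_{\mathbb R^N}|\nabla\phi|^2$ — and checking that the resulting weight is genuinely usable in a Hörmander $\dbar$-estimate on $\Omega$ (plurisubharmonicity is automatic since $\log d_H$ is plurisubharmonic when $H$ is a complex-affine subspace, but if $H$ is only real-affine of odd real codimension one instead invokes that $d_H^{2-N}$ is superharmonic and runs the estimate through the Laplacian rather than the complex Hessian, which is exactly why the introduction emphasizes hyperbolicity rather than pseudoconvexity). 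I expect this real-codimension bookkeeping, together with making the weight $\varphi$ globally defined on all of $\Omega$ while retaining both the blow-up along $H$ and an upper bound guaranteeing $\tilde f\in L^2(\Omega)$ in the unweighted sense, to be the only genuinely delicate point; the rest is standard Hörmander $L^2$ theory and the identity theorem.
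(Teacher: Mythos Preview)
Your overall architecture---cut off near $E$, solve a $\bar\partial$-equation, then invoke unique continuation via hypothesis (3)---matches the paper's. But two of your key steps diverge from the paper and, as written, do not go through.

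First, you solve $\bar\partial u=g$ on $\Omega'$. Nothing guarantees $\Omega'$ is pseudoconvex, so H\"ormander-type estimates are not directly available there, and your weight $\varphi=(N-2)\log d_H$ is not plurisubharmonic when $H$ is merely real-affine. The paper instead extends $v:=\bar\partial(\chi(d_E/r)f)$ by zero to all of $\mathbb C^n$ and solves globally. The Hardy inequality enters not as a singular weight $e^{-\varphi}$ but is combined with the \emph{unweighted} Morrey--Kohn--H\"ormander identity to produce the a~priori bound
\[
\int_{\mathbb C^n}\frac{|\omega|^2}{d_H^2}\ \lesssim\ \|\bar\partial\omega\|^2+\|\bar\partial^*\omega\|^2,\qquad \omega\in\mathcal D_{(0,1)}(\mathbb C^n),
\]
and duality then yields a solution $u$ with $\int_{\mathbb C^n}|u|^2\lesssim\int_{\mathbb C^n}|v|^2\,d_H^2<\infty$ (finite because $\mathrm{supp}\,v\subset H_{R+r}$, so $d_H\le R+r$ there).

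Second, your vanishing mechanism fails. With $\varphi=(N-2)\log d_H$ the weight is $e^{-\varphi}=d_H^{-(N-2)}$, which is \emph{locally integrable} across $H$: the normal integral is $\int_0^\varepsilon s^{-(N-2)}s^{N-1}\,ds<\infty$. Hence $\int|u|^2 e^{-\varphi}<\infty$ does not force $u|_H=0$; your own parenthetical invokes $d_H^{-N}$, a different exponent than your weight actually produces, and the Hardy inequality does not give you that stronger weight. Even granting vanishing on $H$, the unbounded set $H$ need not lie in $\Omega'$, nor in the region where $u$ is holomorphic. The paper's argument is entirely different: having solved on $\mathbb C^n$, one has $u$ holomorphic and $L^2$ on $\mathbb C^n\setminus E_r$. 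Since $H_{R+r}\subsetneq\mathbb C^n$ is convex, there is a real hyperplane $\mathcal H$ at positive distance from it, and because $n\ge2$ this $\mathcal H$ contains a complex line $\mathcal L$. On the cylinder $\mathcal L\times\mathbb B^{n-1}\subset\mathbb C^n\setminus E_r$ the $L^2$ Liouville theorem on each $\mathbb C$-slice forces $u=0$, and unique continuation propagates this to the unbounded component of $\mathbb C^n\setminus E_r$, which meets $\Omega\setminus E$. Then $F:=\chi(d_E/r)f-u$ is holomorphic on $\Omega$, agrees with $f$ on a nonempty open subset of $\Omega\setminus E$, and hypothesis (3) finishes.
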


A large literature exists for different proofs and generalizations of the Hartogs extension theorems (cf.  \cite{Bochner1},  \cite{Bochner2},  \cite{Bochner3},  \cite{BDS1},  \cite{BDS2},  \cite{Ehrenpreis},   \cite{Ohsawa},  \cite{Range}).  Among them we mention in particular the theorem of Bochner \cite{Bochner2} that every $L^2$ holomorphic function defined on  $D\times i\mathbb R^n$ can be extended to an $L^2$ holomorphic function on  $(\text{convex hull of\ }D)\times i\mathbb R^n$ for a domain $D\subset \subset \mathbb R^n$,  which is closely related to Theorem \ref{th:Hartogs_L2}.  

The Hardy inequality for a domain $\Omega\subsetneq \mathbb R^N$ is given as follows
\begin{equation}\label{eq:Hardy_2}
c^2 \int_\Omega |\phi|^2/\delta_\Omega^2 \le \int_\Omega |\nabla \phi|^2,\ \ \ \forall\,\phi\in C^\infty_0(\Omega),
\end{equation}
where $\delta_\Omega$ denotes the boundary distance.   The Hardy constant $h(\Omega)$ of $\Omega$ is defined by
\[
h(\Omega)^2:=\inf\left\{\frac{\int_\Omega|\nabla\phi|^2}{\int_\Omega\phi^2/\delta_\Omega^2}: \phi\in C_0^\infty(\Omega)\setminus \{0\} \right\}.
\]
It is well known that $h(\Omega)>0$ when $\Omega$ is a bounded domain with Lipschitz boundary and $h(\Omega)=1/2$ when $\Omega$ is a convex domain. On the other hand, there exists non-convex sectors $\Omega\subset\mathbb{C}$ such that $h(\Omega)=1/2$ (cf. Davies \cite{Davies95}).

 Let  $K_\Omega(z,w)$ denote the Bergman kernel on a domain $\Omega\subset \mathbb C^n$.   Following \cite{Chen17Hyperconvex},  we define the integrability index of $K_\Omega(\cdot,w)$ by
\[
\beta(\Omega):=\sup\{\beta\geq2:K_\Omega(\cdot,w)\in{L^\beta}(\Omega),\ \ \ \forall\,w\in\Omega\}.
\]
This concept is motivated by a celebrated conjecture of Brennan \cite{Brennan}.  An equivalent statement of this conjecture is that $\beta(\Omega)\ge 4$ holds for any simply-connected domain $\Omega\subsetneq \mathbb C$. It is also  interesting to find reasonable lower bounds of $\beta(\Omega)$ for arbitrary planar domains.  
  
\begin{theorem}\label{th:Bergman_int}
If\/ $\Omega$ is  a domain in $\mathbb C$,  then  $\beta(\Omega)\geq\frac{6}{3-h(\Omega)}$.
\end{theorem}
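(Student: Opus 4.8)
The plan is to reduce the statement to a weighted $L^2$ bound for $K_\Omega(\cdot,w)$ and then to extract that bound by inserting a test function built from powers of $|K_\Omega(\cdot,w)|$ into the Hardy inequality \eqref{eq:Hardy_2}.

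\emph{Step 1: reduction to a weighted $L^2$ estimate.} Fix $w\in\Omega$. The monotonicity of the Bergman kernel under the inclusion $B(z,\delta_\Omega(z))\subset\Omega$ gives the classical bound $K_\Omega(z,z)\le(\pi\,\delta_\Omega(z)^2)^{-1}$, hence
\[
|K_\Omega(z,w)|\le K_\Omega(z,z)^{1/2}K_\Omega(w,w)^{1/2}\le C_w\,\delta_\Omega(z)^{-1},\qquad C_w:=\big(K_\Omega(w,w)/\pi\big)^{1/2}.
\]
Therefore, for $\beta\ge2$,
\[
\int_\Omega|K_\Omega(\cdot,w)|^\beta\le C_w^{\,\beta-2}\int_\Omega|K_\Omega(\cdot,w)|^2\,\delta_\Omega^{-(\beta-2)} .
\]
Since $2+\tfrac{2h(\Omega)}{3-h(\Omega)}=\tfrac{6}{3-h(\Omega)}$, the theorem will follow once we show: for every $w\in\Omega$ and every $\gamma<\tfrac{2h(\Omega)}{3-h(\Omega)}$,
\[
\int_\Omega|K_\Omega(z,w)|^2\,\delta_\Omega(z)^{-\gamma}\,dV(z)<\infty .
\]
Exhausting $\Omega$ by relatively compact subdomains, we may assume $\Omega$ bounded.

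\emph{Step 2: the key Hardy estimate.} Write $f=K_\Omega(\cdot,w)$. For any $s>0$ one has, off the zero set, $\Delta(|f|^s)=s^2|f|^{s-2}|f'|^2$, equivalently $|\nabla(|f|^s)|^2=|f|^s\,\Delta(|f|^s)$, and this holds distributionally on all of $\Omega$ (the zeros carry no mass). Multiplying by $\chi^2$ with $\chi\in C_0^\infty(\Omega)$ and integrating by parts gives
\[
\int_\Omega|\nabla(\chi|f|^s)|^2+\int_\Omega\chi^2\,|\nabla(|f|^s)|^2=\int_\Omega|f|^{2s}\,|\nabla\chi|^2 ,
\]
so that the possibly infinite term $\int\chi^2|\nabla(|f|^s)|^2$ is subtracted off rather than produced; this is the one genuinely complex‑analytic input. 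Feeding this into the Hardy inequality $h(\Omega)^2\int\delta_\Omega^{-2}(\chi|f|^s)^2\le\int|\nabla(\chi|f|^s)|^2$ (extended by density to $W^{1,2}_0$) yields the key estimate
\[
h(\Omega)^2\int_\Omega\frac{\chi^2|f|^{2s}}{\delta_\Omega^2}\;+\;\int_\Omega\chi^2\,|\nabla(|f|^s)|^2\;\le\;\int_\Omega|f|^{2s}\,|\nabla\chi|^2 ,
\]
which controls simultaneously a Hardy‑weighted mass of $|f|^{2s}$ and the Dirichlet energy of $|f|^{s}$ by a single collar integral.

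\emph{Step 3: iteration via the planar Sobolev embedding, and the main obstacle.} Starting from the trivial fact $f\in A^2(\Omega)$, one bootstraps the integrability exponent: at each stage, for a suitable $s$ and a cutoff $\chi=\chi_\varepsilon$ concentrated near $\partial\Omega$, one combines the two quantities controlled by the key estimate with the two‑dimensional Sobolev/Gagliardo–Nirenberg inequality — this is exactly where the numbers $6$ and $3$ enter, through the arithmetic $W^{1,6/5}\hookrightarrow L^{3}$ (and $W^{1,2}\hookrightarrow L^{q}$) of the planar embeddings — and upgrades $L^{2s}$‑control of $f$ to $L^{sq}$‑control; the Hardy constant enters linearly and the fixed point of the resulting recursion for the admissible weight is precisely $\gamma=\tfrac{2h(\Omega)}{3-h(\Omega)}$, which gives the weighted estimate of Step 1 for all smaller $\gamma$. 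The hard part is the collar term $\int_\Omega|f|^{2s}|\nabla\chi_\varepsilon|^2$: it does \emph{not} vanish as $\chi_\varepsilon\uparrow1$ (indeed $\int_\Omega|f|^{2s}\delta_\Omega^{-2}=\infty$ for a generic holomorphic $f$), so one cannot simply pass to the limit, and here it is essential that $f$ is the reproducing kernel rather than an arbitrary member of $A^2(\Omega)$ — concretely, the pointwise bound $|f|\le C_w\delta_\Omega^{-1}$ of Step 1 together with $\|f\|_{L^2(\Omega)}^2=K_\Omega(w,w)$ are what let the collar terms be absorbed at each step. (That the special structure of $f$ cannot be dispensed with is visible from the failure of the Step 1 estimate for general $f\in A^2$, e.g.\ $f(z)=(1-z)^{-\alpha}$ with $\alpha$ near $1$ on the unit disc.) A secondary technical point is that for an arbitrary planar $\Omega$ the function $\delta_\Omega$ is only semiconcave, so the cutoffs and test functions must be arranged so that no term involving $\Delta\delta_\Omega$ is ever generated.
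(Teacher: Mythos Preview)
Your Step~1 is a valid reduction and Step~2 is a correct and elegant identity: for holomorphic $f$ one indeed has
\[
\int_\Omega|\nabla(\chi|f|^s)|^2+\int_\Omega\chi^2|\nabla(|f|^s)|^2=\int_\Omega|f|^{2s}|\nabla\chi|^2,
\]
so Hardy gives exactly the displayed inequality.

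Step~3, however, is not a proof --- and the gap is structural, not cosmetic. You assert that the collar term $\int|f|^{2s}|\nabla\chi_\varepsilon|^2$ can be absorbed ``concretely'' using (i) the pointwise bound $|f|\le C_w\delta_\Omega^{-1}$ and (ii) the value $\|f\|_{L^2}^2=K_\Omega(w,w)$. But \emph{every} $g\in A^2(\Omega)$ satisfies $|g(z)|\le K_\Omega(z)^{1/2}\|g\|\le(\pi)^{-1/2}\delta_\Omega(z)^{-1}\|g\|$ together with a finite $L^2$ norm; neither (i) nor (ii) distinguishes the Bergman kernel from a generic Bergman-space function. Your own counterexample $f(z)=(1-z)^{-\alpha}$ on the disc ($\alpha$ just below $1/2$) satisfies (i) and (ii) yet fails the weighted estimate $\int|f|^2\delta^{-\gamma}<\infty$ for any $\gamma>1-2\alpha$, well short of $\tfrac{2h(\mathbb D)}{3-h(\mathbb D)}=\tfrac25$. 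So whatever makes the argument work, it is not (i)+(ii), and you have not supplied it. The unspecified Sobolev iteration and the claimed recursion with fixed point $\gamma=\tfrac{2h}{3-h}$ remain slogans; no recursion is written down, no step is verified.

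The paper's route is different in substance. It first converts the Hardy inequality into a weighted $\bar\partial$-estimate with the \emph{non-plurisubharmonic} weight $\delta_\Omega^\alpha$ (Propositions~\ref{prop:L2_C}--\ref{prop:L2_DF_C} and Corollary~\ref{cor:L2_canonical_solution}), getting
\[
\int_\Omega|u_0|^2\delta_\Omega^\alpha\lesssim\int_\Omega|v|^2\delta_\Omega^{2+\alpha},\qquad 0<\alpha<\tfrac{2h(\Omega)}3,
\]
for the $L^2$-\emph{minimal} solution $u_0$ of $\bar\partial u=v$. The property of $K_\Omega(\cdot,w)$ that is actually used is the \emph{reproducing property}: with $\chi=\chi(\delta_\Omega/\varepsilon)$ a boundary cutoff, the Bergman projection of $\chi K_\Omega(\cdot,w)$ evaluated at $w$ equals $\chi(\delta_\Omega(w)/\varepsilon)K_\Omega(w,w)-u_0(w)=-u_0(w)$, so
\[
\int_{\delta_\Omega\le\varepsilon}|K_\Omega(\cdot,w)|^2\le |u_0(w)|.
\]
Controlling $|u_0(w)|$ by the weighted $\bar\partial$-estimate and iterating dyadically yields $\int_{\delta_\Omega\le\varepsilon}|K_\Omega(\cdot,w)|^2\lesssim\varepsilon^{2c/3}$ for every $c<h(\Omega)$ (Proposition~\ref{prop:Bergman_decaying}); the mean-value inequality then upgrades this to $|K_\Omega(z,w)|^2\lesssim\delta_\Omega(z)^{2c/3-2}$, and the final dyadic computation gives $\beta(\Omega)\ge\tfrac{6}{3-c}$. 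The reproducing identity is precisely the ``special structure of $f$'' that your sketch is missing.
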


Next recall the following Sobolev-Morrey inequality (see, e.g., \cite{Cianchi08, Talenti94})
\begin{equation}\label{eq:Morrey}
\sup_\Omega|\phi|\leq C_0(q-2)^{-1/2}|\Omega|^{1/2-1/q}\left(\int_\Omega|\nabla \phi|^q\right)^{1/q},\ \ \ \forall\,\phi\in{C^\infty_0(\Omega)},\ q>2,
\end{equation}
where $C_0$ is a numerical constant. This inequality can be used to obtain an $L^p$ estimate for the $\bar{\partial}$-equation, which goes back to Berndtsson \cite{Berndtsson92}. This combined with parameter dependence of the $p$-Bergman kernel (cf. \cite{CZ22}) gives the following property on the absolute continuity with respect to logarithmic capacities.

\begin{theorem}\label{th:absolute_continuity}
Let $\Omega\subset\mathbb{C}$ be a bounded domain.  Let $0<r_0\ll1$ be given.  For any $\varepsilon>0$, there exists some $\gamma>0$ such that
\[
|K_{\Omega\setminus{E}}(z,w)-K_\Omega(z,w)|<\varepsilon
\]
holds for any closed subset $E\subset\mathbb{C}$ whose logarithmic capacity satisfies $\mathcal{C}_l(E)<\gamma$ and any $z,w\in \Omega$ with
 $$
 \min\{\delta_{\Omega\setminus{E}}(z),\delta_{\Omega\setminus{E}}(w)\}\geq{r_0}.
 $$
\end{theorem}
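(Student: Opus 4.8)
The plan is to reduce the two-variable statement to a one-variable (diagonal) statement and then prove the diagonal statement by passing through the $p$-Bergman kernels with $p>2$.

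\textbf{Reduction to the diagonal.} Fix $w\in\Omega$ with $\delta_{\Omega\setminus E}(w)\ge r_0$. Since $K_\Omega(\cdot,w)|_{\Omega\setminus E}\in A^2(\Omega\setminus E)$ and $\|K_\Omega(\cdot,w)\|_{L^2(\Omega\setminus E)}\le\|K_\Omega(\cdot,w)\|_{L^2(\Omega)}$, testing the extremal characterization $1/K_U(w)=\inf\{\|f\|_{L^2(U)}^2:f\in A^2(U),\ f(w)=1\}$ with the restriction of $K_\Omega(\cdot,w)/K_\Omega(w)$ gives $K_{\Omega\setminus E}(w)\ge K_\Omega(w)$, and the reproducing property yields
\[
\|K_{\Omega\setminus E}(\cdot,w)-K_\Omega(\cdot,w)\|_{L^2(\Omega\setminus E)}^2=K_{\Omega\setminus E}(w)-2K_\Omega(w)+\|K_\Omega(\cdot,w)\|_{L^2(\Omega\setminus E)}^2\le K_{\Omega\setminus E}(w)-K_\Omega(w).
\]
Applying $|h(z)|\le\sqrt{K_{\Omega\setminus E}(z)}\,\|h\|_{L^2(\Omega\setminus E)}$ to $h=K_{\Omega\setminus E}(\cdot,w)-K_\Omega(\cdot,w)$ and $K_{\Omega\setminus E}(z)\le(\pi\delta_{\Omega\setminus E}(z)^2)^{-1}\le(\pi r_0^2)^{-1}$, one gets
\[
|K_{\Omega\setminus E}(z,w)-K_\Omega(z,w)|\le\frac1{\sqrt\pi\,r_0}\bigl(K_{\Omega\setminus E}(w)-K_\Omega(w)\bigr)^{1/2}.
\]
Hence it suffices to prove that $K_{\Omega\setminus E}(w)-K_\Omega(w)\to0$ as $\mathcal C_l(E)\to0$, uniformly for $w$ in the fixed compact set $\{w\in\Omega:\delta_\Omega(w)\ge r_0,\ d(w,E)\ge r_0\}$. (Since $|E|\le\pi\,\mathcal C_l(E)^2$ by the isoperimetric inequality for capacity, the difference between $|E|>0$ and $|E|=0$ contributes only harmless $O(|E|)$ terms above.)

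\textbf{The diagonal estimate via $p$-Bergman kernels.} To complement $K_{\Omega\setminus E}(w)\ge K_\Omega(w)$ one must build, from an extremal function on $\Omega\setminus E$, a good competitor on $\Omega$; we do this for $p>2$. Let $g$ be the $L^p$-extremal for $K^{(p)}_{\Omega\setminus E}(\cdot,w)$, so $g(w)=1$ and $\|g\|_{L^p(\Omega\setminus E)}^p=1/K^{(p)}_{\Omega\setminus E}(w)\le1/K^{(p)}_\Omega(w)$. Pick a cutoff $\psi$ with $\psi\equiv1$ near $E$, $0\le\psi\le1$, $\psi(w)=0$, \emph{calibrated to the logarithmic capacity}: via the equilibrium potential of $E$ and the relation $\mathrm{cap}(E,\cdot)\asymp\bigl(\log(1/\mathcal C_l(E))\bigr)^{-1}$ between logarithmic capacity and condenser capacity, $\|\bar\partial\psi\|_{L^2}^2\lesssim\bigl(\log(1/\mathcal C_l(E))\bigr)^{-1}$. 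Solve $\bar\partial v=-(\bar\partial\psi)g$ on $\Omega$ (pseudoconvex since planar) by the $L^p$-estimate for $\bar\partial$ coming from \eqref{eq:Morrey} in the manner of Berndtsson, producing $v$ with $\|v\|_{L^\infty(\Omega)}\lesssim(q-2)^{-1/2}|\Omega|^{1/2-1/q}\|(\bar\partial\psi)g\|_{L^q(\Omega)}$ for a suitable $q>2$. Then $\tilde g:=(1-\psi)g-v\in A^p(\Omega)$ with $\bar\partial\tilde g=0$, $\tilde g(w)=1-v(w)$ and $\|\tilde g\|_{L^p(\Omega)}\le\|g\|_{L^p(\Omega\setminus E)}+|\Omega|^{1/p}\|v\|_{L^\infty(\Omega)}$, so
\[
\frac1{K^{(p)}_\Omega(w)}\le\frac{\|\tilde g\|_{L^p(\Omega)}^p}{|1-v(w)|^p}\le\frac{\bigl(K^{(p)}_{\Omega\setminus E}(w)^{-1/p}+|\Omega|^{1/p}\|v\|_{L^\infty(\Omega)}\bigr)^p}{(1-\|v\|_{L^\infty(\Omega)})^p},
\]
which, after rearranging, gives $K^{(p)}_{\Omega\setminus E}(w)-K^{(p)}_\Omega(w)\le C(p,r_0,\Omega)\,\|v\|_{L^\infty(\Omega)}$. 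Letting $p\to2^+$ and invoking the continuity of $p\mapsto K^{(p)}_U(w)$ from \cite{CZ22} (keeping the estimates uniform in $p$ near $2$) transfers this to the ordinary Bergman kernel.

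\textbf{Main obstacle.} The heart of the matter is to make $\|v\|_{L^\infty(\Omega)}$, hence $\|(\bar\partial\psi)g\|_{L^q(\Omega)}$, go to $0$ as $\mathcal C_l(E)\to0$, uniformly in $w$ and $E$. This is delicate for two competing reasons: the sub-mean-value bound $|g(z)|\lesssim\|g\|_{L^p}\,\delta_{\Omega\setminus E}(z)^{-2/p}$ is too lossy near $E$, while a cutoff with small condenser energy must transition over a logarithmic scale away from $E$, so a careless pairing diverges. Resolving this is precisely where the three inputs must be combined quantitatively: the capacitary choice of $\psi$ (so that the relevant Sobolev norm of $\bar\partial\psi$ is $\lesssim\bigl(\log(1/\mathcal C_l(E))\bigr)^{-1/2}$ and is not concentrated on $\partial E$), the improved local $L^p$-integrability of the holomorphic extremal $g$ for $p>2$ (used, together with a companion cutoff that separates $\partial E$, to control the growth of $g$ across the transition region), and the $L^p$-$\bar\partial$-estimate \eqref{eq:Morrey}. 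The remaining steps — the diagonal reduction and the reassembly of the competitor — are routine.
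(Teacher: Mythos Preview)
Your reduction to the diagonal is correct and in fact cleaner than the paper's: they instead prove a monotonicity statement (their Proposition~\ref{prop:Bergman_monotonic}) for $K_\Omega(z)+K_\Omega(w)\pm 2\,\mathrm{Re}\,K_\Omega(z,w)$ and $K_\Omega(z)+K_\Omega(w)\pm 2\,\mathrm{Im}\,K_\Omega(z,w)$ via an extremal characterization, obtaining
\[
|K_{\Omega\setminus E}(z,w)-K_\Omega(z,w)|\le (K_{\Omega\setminus E}(z)-K_\Omega(z))+(K_{\Omega\setminus E}(w)-K_\Omega(w)).
\]
Your $L^2$ expansion reaches an equivalent conclusion more directly.

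The diagonal argument, however, has a genuine gap --- precisely the one you label ``Main obstacle'' and do not actually resolve. The difficulty is real: with $p>2$ and an $L^\infty$ (or $L^q$, $q>2$) estimate for the $\bar\partial$-solution, you must make $\|(\bar\partial\psi)g\|_{L^q}$ small, but $\|\bar\partial\psi\|_{L^r}$ is capacitarily small only for $r\le 2$, while $g$ carries no uniform pointwise bound on $\mathrm{supp}\,\bar\partial\psi$ (which, for the equilibrium-potential cutoff, reaches all the way to the Choquet neighbourhood of $E$). No H\"older pairing closes this, and your three ``inputs'' are named but not combined. The paper's key insight is to go the \emph{other} direction: take $p<2$ (and let $p\to 2^-$), use Berndtsson's $L^1\!\to L^p$ estimate $\|u\|_{L^p}\lesssim_{p,\Omega}\int_\Omega|v|$ (their Theorem~\ref{th:Berndtsson_Lp}, proved via the same Sobolev--Morrey inequality), and work with the $2$-Bergman kernel $K_{\Omega\setminus E}(\cdot,z)$ itself rather than a $p$-extremal. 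With $v_j=-K_{\Omega\setminus E}(\cdot,z)\,\bar\partial\chi_j$, a single Cauchy--Schwarz gives
\[
\int_\Omega|v_j|\le\|K_{\Omega\setminus E}(\cdot,z)\|_{L^2(\Omega\setminus E)}\,\|\bar\partial\chi_j\|_{L^2}=K_{\Omega\setminus E}(z)^{1/2}\,c_j^{1/2},
\]
and both factors are exactly the quantities one controls: the first by $K_{\Omega\setminus E}(z)\le(\pi r_0^2)^{-1}$, the second by the condenser estimate $c_j\lesssim(\log(r_0/\mathcal C_l(E)))^{-1}$. The competitor $f_j=(1-\chi_j)K_{\Omega\setminus E}(\cdot,z)-u_j\in A^p(\Omega)$ then yields a lower bound on $K_{\Omega,p}(z)$, and the locally uniform convergence $K_{\Omega,p}(z)^{1/p}\to K_\Omega(z)^{1/2}$ as $p\to 2^-$ (Dini, via the monotonicity of $|\Omega|^{1/p}K_{\Omega,p}(z)^{1/p}$ from \cite{CZ22}) finishes. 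In short: the correct limit is $p\to 2^-$, not $p\to 2^+$; that is what lets the capacitary $L^2$-energy of the cutoff pair cleanly with the $L^2$ mass of the kernel and dissolves your obstacle entirely.
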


A well-known theorem of Carleson asserts that $K_{\Omega\setminus{E}}(z)=K_\Omega(z)$ if $E$ is a polar set, i.e., $\mathcal{C}_l(E)=0$. It is not difficult to see that Theorem \ref{th:absolute_continuity} implies Carleson's theorem. Under certain reasonable assumption,  we even have  an effective version of Theorem \ref{th:absolute_continuity}. 

\begin{theorem}\label{th:absolute_continuity_effective}
Let $\Omega\subset \mathbb C$ be a domain on which there exists  a negative continuous  subharmonic function $\rho$ on $\Omega$ with $-\rho\lesssim \delta_\Omega^c$ for some $c>0$.    Let $0<r_0\ll1$ be given.  For any  $0<\alpha<1/3$,  there exists a constant $C=V(\alpha,r_0,\Omega)>0$ such that for any closed subset $E\subset\mathbb{C}$ with $\mathcal{C}_l(E)\ll1$  and any $z,w\in \Omega$ with $\min\{\delta_{\Omega\setminus{E}}(z),\delta_{\Omega\setminus{E}}(w)\}\geq{r_0}$,   
\[
|K_{\Omega\setminus{E}}(z,w)-K_\Omega(z,w)|\leq C \left[\log\frac{r_0}{\mathcal{C}_l(E)}\right]^{-\alpha}.
\]
\end{theorem}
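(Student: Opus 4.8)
The plan is to make the proof of Theorem~\ref{th:absolute_continuity} quantitative, tracking the dependence on $\mathcal{C}_l(E)$ at every step, and to use the subharmonic function $\rho$ to supply the uniform-in-$E$ a priori bounds that the qualitative argument does not need. I would start with two reductions. First, since $|E|\le\pi\,\mathcal{C}_l(E)^2$, restriction embeds $A^2(\Omega)$ into $A^2(\Omega\setminus E)$ up to a defect that is a positive power of $\mathcal{C}_l(E)$, so $K_{\Omega\setminus E}\ge K_\Omega$, and writing $h_w:=K_{\Omega\setminus E}(\cdot,w)-K_\Omega(\cdot,w)$ one checks that $h_w\in A^2(\Omega\setminus E)$ is orthogonal to $A^2(\Omega)$ there modulo the negligible term $\int_E K_\Omega(\cdot,w)\overline{g}$ ($g\in A^2(\Omega)$); consequently
\[
K_{\Omega\setminus E}(w)-K_\Omega(w)=\|h_w\|^2_{L^2(\Omega\setminus E)}+(\text{negligible}),
\]
and expanding $h_w(z)=\langle h_w,K_{\Omega\setminus E}(\cdot,z)\rangle_{\Omega\setminus E}$ via $K_{\Omega\setminus E}(\cdot,z)=K_\Omega(\cdot,z)+h_z$ gives
\[
|K_{\Omega\setminus E}(z,w)-K_\Omega(z,w)|\le\big[(K_{\Omega\setminus E}(z)-K_\Omega(z))(K_{\Omega\setminus E}(w)-K_\Omega(w))\big]^{1/2}+(\text{negligible}).
\]
Hence it suffices to bound the on-diagonal defect $K_{\Omega\setminus E}(z)-K_\Omega(z)$ when $\delta_{\Omega\setminus E}(z)\ge r_0$, and the off-diagonal estimate then inherits the same exponent since both factors are small. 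Second, a capacity-adapted cutoff lemma: there is $\chi\in C^\infty(\mathbb C)$, $0\le\chi\le1$, with $\chi=0$ near $E$, $\chi=1$ on $\{d(\cdot,E)\ge r_0/2\}$ and $\int_{\mathbb C}|\nabla\chi|^2\lesssim[\log(r_0/\mathcal{C}_l(E))]^{-1}$ (take $\chi$ to be the equilibrium potential of the condenser $(E,\{d(\cdot,E)\ge r_0/2\})$); and, choosing $q$ with $q-2\asymp[\log(r_0/\mathcal{C}_l(E))]^{-1}$, one also gets $\|\nabla\chi\|_{L^q(\mathbb C)}\lesssim[\log(r_0/\mathcal{C}_l(E))]^{-1/2}$, the smallness of $q-2$ being needed precisely so that the factor $\mathcal{C}_l(E)^{2-q}$ generated by the transition region at scale $\mathcal{C}_l(E)$ stays bounded. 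This is where the precise form $\log(r_0/\mathcal{C}_l(E))$, with the $r_0$, comes from.

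Next I would run the argument behind Theorem~\ref{th:absolute_continuity} with $p$-Bergman kernels for $p=2+\eta$, $\eta$ small. Let $g^{(p)}$ be the extremal for $K^{(p)}_{\Omega\setminus E}(z)^{-1}=\inf\{\|g\|^p_{L^p(\Omega\setminus E)}:g(z)=1\}$; the point of $p>2$ is that $|g^{(p)}|\lesssim_{r_0}\delta_{\Omega\setminus E}^{-2/p}$ near $E$, milder than the $\delta^{-1}$-growth for $p=2$. Then $\chi g^{(p)}$ extends smoothly to $\Omega$ with $\bar\partial(\chi g^{(p)})=g^{(p)}\bar\partial\chi$ supported in $\{d(\cdot,E)<r_0/2\}$, and solving $\bar\partial u=g^{(p)}\bar\partial\chi$ on $\Omega$ by the $L^q$-estimate for $\bar\partial$ underlying \eqref{eq:Morrey} (Berndtsson \cite{Berndtsson92}), whose constant is $\asymp(q-2)^{-1/2}|\Omega|^{1/2-1/q}$, produces $\tilde g:=\chi g^{(p)}-u\in A^p(\Omega)$. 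Since $u$ is holomorphic on $D(z,r_0/2)$ and $\|u\|_{L^\infty(\Omega)}$ is small, $\tilde g(z)=1-u(z)$ is bounded below, $\tilde g/\tilde g(z)$ is an admissible competitor for $K^{(p)}_\Omega(z)$, and one gets
\[
0\le K^{(p)}_\Omega(z)^{-1/p}-K^{(p)}_{\Omega\setminus E}(z)^{-1/p}\lesssim_{r_0}\|u\|_{L^\infty(\Omega)}\lesssim_{r_0,\Omega}(q-2)^{-1/2}\,\|g^{(p)}\bar\partial\chi\|_{L^q(\Omega)}.
\]
Finally, the parameter dependence of the $p$-Bergman kernel from \cite{CZ22}, made effective (an explicit modulus of continuity in $p$, uniform over the domains $\Omega\setminus E$ with $\delta_{\Omega\setminus E}(z)\ge r_0$), replaces $K^{(p)}$ by $K=K^{(2)}$ at the cost of $O_{r_0}(\eta)$, and the elementary bounds $c_{r_0}\le K_\Omega(z),K_{\Omega\setminus E}(z)\le(\pi r_0^2)^{-1}$ convert everything into $K_{\Omega\setminus E}(z)-K_\Omega(z)\lesssim_{r_0,\Omega}\ \eta+(q-2)^{-1/2}\|g^{(p)}\bar\partial\chi\|_{L^q}$. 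Taking $\eta\asymp q-2\asymp[\log(r_0/\mathcal{C}_l(E))]^{-\theta}$ and using $\|g^{(p)}\bar\partial\chi\|_{L^q}\lesssim_{r_0,\Omega}\|\nabla\chi\|_{L^q}\lesssim[\log(r_0/\mathcal{C}_l(E))]^{-1/2}$, the bound becomes $\lesssim[\log(r_0/\mathcal{C}_l(E))]^{-\theta}+[\log(r_0/\mathcal{C}_l(E))]^{(\theta-1)/2}$; the two terms balance at $\theta=1/3$, and the (unavoidable) losses in the cutoff lemma and in \cite{CZ22} turn this into a strict ceiling, giving exactly $0<\alpha<1/3$.

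The main obstacle is the defect estimate $\|g^{(p)}\bar\partial\chi\|_{L^q}\lesssim_{r_0,\Omega}\|\nabla\chi\|_{L^q}$: on the transition annulus hugging $E$, $g^{(p)}$ may grow like $\delta_{\Omega\setminus E}^{-2/p}$ while $|\nabla\chi|$ grows like $\delta_{\Omega\setminus E}^{-1}$, so that $\int|g^{(p)}|^{q}|\bar\partial\chi|^{q}$ has no evident capacity gain, and this is exactly what forces both the use of $p>2$ (to soften the growth of $g^{(p)}$) and the hypothesis that $\Omega$ carries a negative continuous subharmonic $\rho$ with $-\rho\lesssim\delta_\Omega^c$. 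The role of $\rho$ is to upgrade the pointwise bound on $g^{(p)}$ near $E$ to an $L^p$-type integrability statement with an $E$-independent constant, via the $L^p$-integrability of Bergman kernels on hyperconvex domains around Theorem~\ref{th:Bergman_int} and \cite{Chen17Hyperconvex} together with a Donnelly--Fefferman/Berndtsson weighted $L^2$-estimate with weight $-\log(-\rho)$, localized near $E$ (where deleting $E$ destroys the Hölder decay of $\rho$, a loss that must be compensated by localization). The other delicate point is making the \cite{CZ22} parameter dependence quantitative and domain-uniform; granting these two inputs, the bookkeeping above is routine.
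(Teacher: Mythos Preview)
Your overall architecture --- capacity-adapted cutoff, $\bar\partial$-correction, $p$-Bergman comparison, and balancing --- is the right shape, but two pivotal choices are inverted relative to the paper, and this is precisely what generates the ``main obstacle'' you leave unresolved.

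First, the role of $\rho$. In the paper the hypothesis on $\rho$ enters \emph{only} through the cited result \cite{CX24}, which supplies the quantitative rate
\[
\bigl|K_{\Omega,p}(z)^{1/p}-K_\Omega(z)^{1/2}\bigr|\ \lesssim\ (2-p)\log\tfrac{1}{2-p}
\]
on compact subsets of the \emph{fixed} domain $\Omega$. It has nothing to do with controlling any extremal near $E$; that part of the argument is carried over verbatim from the proof of Theorem~\ref{th:absolute_continuity}, culminating in \eqref{eq:difference2}, and requires no extra hypothesis. Your attribution of $\rho$ to an $E$-uniform integrability bound for $g^{(p)}$ near $E$ is the wrong mechanism.

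Second, the direction of $p$ and the domain carrying the $p$-kernel. The paper takes $p<2$ and compares $K_{\Omega\setminus E}(z)^{1/2}$ to $K_{\Omega,p}(z)^{1/p}$, the $p$-kernel on $\Omega$ itself: one cuts off the ordinary $2$-extremal $K_{\Omega\setminus E}(\cdot,z)$ and applies Berndtsson's $L^1\!\to\!L^p$ estimate (Theorem~\ref{th:Berndtsson_Lp}), obtaining a competitor in $A^p(\Omega)$; this is \eqref{eq:difference1}. Crucially, the $\bar\partial$-data is $K_{\Omega\setminus E}(\cdot,z)\,\bar\partial\chi$, whose $L^1$ norm is bounded by Cauchy--Schwarz using only $\|K_{\Omega\setminus E}(\cdot,z)\|_{L^2}^2=K_{\Omega\setminus E}(z)$ and $\|\nabla\chi\|_{L^2}^2\lesssim[\log(r_0/\mathcal C_l(E))]^{-1}$ --- no pointwise control near $E$ is needed. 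Your scheme instead starts from the $p$-extremal $g^{(p)}$ on $\Omega\setminus E$ with $p>2$, which forces you to (i) estimate $\|g^{(p)}\bar\partial\chi\|_{L^q}$ on the shrinking collar about $E$, and (ii) make the parameter dependence quantitative \emph{uniformly over the varying domains $\Omega\setminus E$}; neither is needed in the paper's route, and the $\rho$-hypothesis on $\Omega$ does not transfer to $\Omega\setminus E$. Once you switch to $p<2$ and keep the $p$-kernel on $\Omega$, your ``main obstacle'' evaporates, the single fixed-domain input from \cite{CX24} suffices, and the balancing $2-p\asymp[\log(r_0/\mathcal C_l(E))]^{2\alpha-1}$ yields the exponent $\alpha<1/3$ directly.
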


In general,  Theorem \ref{th:absolute_continuity} and Theorem \ref{th:absolute_continuity_effective} do not hold if capacities are replaced by volumes.  A simple counterexample is  $\Omega=2\mathbb D$ and $E$ is certain small tublar neighbourhood of  the $1/3$-Cantor set.

On the other hand,  for domains in $\mathbb{C}^n$ with $n\geq2$, we have the following 

\begin{proposition}\label{prop:Hartogs_absolute_continuity}
Let $\Omega$ be a domain in $\mathbb{C}^n$ ($n\geq2$) and $E_0$ a closed set in $\mathbb{C}^n$ which satisfies the conditions (1)-(3) in Theorem \ref{th:Hartogs_L2}. Then 
\[
|K_{\Omega\setminus{E}}(z,w)-K_\Omega(z,w)|\le  C |E|(K_\Omega(z)+K_\Omega(w))
\]
for any closed set $E\subset{E_0}$ and $z,w\in\Omega\setminus{E}$.  In particular, if $|E|=0$, then $K_{\Omega\setminus E}(z,w)=K_\Omega(z,w)$ for $z,w\in\Omega\setminus E$.
\end{proposition}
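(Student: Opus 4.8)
The plan is to compare the two Bergman kernels through their variational/extremal characterizations, using Theorem~\ref{th:Hartogs_L2} to identify the relevant Bergman spaces and then quantifying the difference of the reproducing kernels by an $L^2$ norm estimate. First I would recall the standard fact that for any $f\in A^2(\Omega\setminus E)$ the restriction of $f$ lies in $A^2(\Omega\setminus E)$, and by Theorem~\ref{th:Hartogs_L2} applied to $\Omega$ and the closed set $E\subset E_0$ (which inherits conditions (1)–(3) since $E\subset E_0$, $E_0$ is contained in a tube $H_R$, $E_{0,r}\subset\Omega$, and $\Omega\setminus E\supset\Omega\setminus E_0$ is connected — here I should check carefully that $\Omega\setminus E$ is still connected, which does require a small argument since $E$ is merely an arbitrary closed subset of $E_0$; one uses that $E_0$ has real codimension $\geq 3$ so removing a subset cannot disconnect, together with condition (1) giving room around $E$), we get $A^2(\Omega\setminus E)=A^2(\Omega)$ as vector spaces, with the obvious bijection given by restriction/extension.

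The key quantitative point is that this bijection is \emph{not} an isometry: for $f\in A^2(\Omega)$ one has $\|f\|_{L^2(\Omega\setminus E)}^2=\|f\|_{L^2(\Omega)}^2-\|f\|_{L^2(E)}^2$, and the second term is controlled by $\|f\|_{L^2(E)}^2\leq |E|\sup_E|f|^2$. To turn $\sup_E|f|^2$ into something involving $K_\Omega$, I would use the sub-mean-value inequality on balls of radius $r/2$ (available by condition (1), since $E_{r}\subset\Omega$ means every point of $E$ has a ball of radius $r$ inside $\Omega$) to get $\sup_E|f|^2\leq C_{n,r}\|f\|_{L^2(\Omega)}^2$; more precisely, since we want the $K_\Omega(z)+K_\Omega(w)$ on the right, I expect the cleaner route is to write $K_{\Omega\setminus E}(z,w)-K_\Omega(z,w)$ directly. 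Let $P$ and $P'$ denote the Bergman projections of $\Omega$ and $\Omega\setminus E$. For the kernels, using that $K_{\Omega\setminus E}(\cdot,w)$ extends to an element of $A^2(\Omega)$ with $K_{\Omega\setminus E}(\cdot,w)=\sum$ (in terms of an orthonormal basis) and reproducing against $K_\Omega(\cdot,w)$, the standard identity is
\[
K_\Omega(z,w)-K_{\Omega\setminus E}(z,w)=\langle K_{\Omega\setminus E}(\cdot,w)\chi_E,\,K_\Omega(\cdot,z)\rangle_{L^2(\Omega)}+\text{(symmetric/error terms)},
\]
which after Cauchy–Schwarz is bounded by $\|K_{\Omega\setminus E}(\cdot,w)\|_{L^2(E)}\,\|K_\Omega(\cdot,z)\|_{L^2(E)}$ up to constants; applying the sub-mean-value bound to each factor gives $\leq C|E|\,K_\Omega(z)^{1/2}K_\Omega(w)^{1/2}\cdot(\text{bounded factors})$, and then $K_\Omega(z)^{1/2}K_\Omega(w)^{1/2}\leq \frac12(K_\Omega(z)+K_\Omega(w))$ yields the stated inequality.

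I would organize the write-up as: (i) invoke Theorem~\ref{th:Hartogs_L2} to get $A^2(\Omega\setminus E)=A^2(\Omega)$, including the connectedness check for $\Omega\setminus E$; (ii) establish the pointwise bound $|f(z)|^2\leq C_{n,r}\|f\|_{L^2(\Omega)}^2$ for $z$ in a neighborhood of $E$ via sub-mean-value on balls of radius $\sim r$, and in particular $\|f\|_{L^2(E)}^2\leq C|E|\,\|f\|_{L^2(\Omega)}^2$; (iii) derive the kernel identity relating $K_\Omega-K_{\Omega\setminus E}$ to an inner product supported on $E$ — the cleanest derivation is: extend $g:=K_{\Omega\setminus E}(\cdot,w)$ to $\tilde g\in A^2(\Omega)$, note $\tilde g=P(\tilde g)$ so $\tilde g(z)=\langle \tilde g,K_\Omega(\cdot,z)\rangle_{L^2(\Omega)}$, while $g(w)=\langle \tilde g,K_\Omega(\cdot,w)\rangle_{L^2(\Omega\setminus E)}=\langle\tilde g,K_\Omega(\cdot,w)\rangle_{L^2(\Omega)}-\langle\tilde g\chi_E,K_\Omega(\cdot,w)\rangle$; combine with the analogous expansion for $K_\Omega(z,w)$ against $g$, and symmetrize; (iv) apply Cauchy–Schwarz and step (ii) to each $L^2(E)$ factor. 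The main obstacle I anticipate is the bookkeeping in step (iii) — making the identity exact rather than approximate, since a naive subtraction leaves cross terms, and one must be careful that the extension map interacts correctly with both projections; once the identity is pinned down, the rest is the routine sub-mean-value estimate. The final "in particular" statement ($|E|=0\Rightarrow K_{\Omega\setminus E}=K_\Omega$) is then immediate from the inequality, once one notes that $K_\Omega(z),K_\Omega(w)<\infty$ for $z,w\in\Omega$.
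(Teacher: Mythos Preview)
Your overall strategy---deriving a kernel identity supported on $E$ and then applying Cauchy--Schwarz plus the sub-mean-value inequality---is valid and genuinely different from the paper's route, but two points need correction.

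First, your connectedness argument is wrong: condition~(2) only places $E_0$ inside a tube $H_R$ of positive radius around a codimension-$\ge 3$ subspace, so $E_0$ itself may be full-dimensional (e.g.\ a solid ball), and a closed $E\subset E_0$ can be a topological sphere that disconnects $\Omega$. The paper does not address this either; the statement should be read with $E$ itself satisfying~(3).

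Second, the identity you sketch in step~(iii) is garbled---the line $g(w)=\langle\tilde g,K_\Omega(\cdot,w)\rangle_{L^2(\Omega\setminus E)}$ is not a reproducing formula. The correct derivation reproduces $K_\Omega(\cdot,z)$ at $w$ in $A^2(\Omega\setminus E)$ and reproduces $\tilde g$ at $z$ in $A^2(\Omega)$, and subtracting gives the exact identity
\[
K_{\Omega\setminus E}(w,z)-K_\Omega(w,z)=\int_E K_\Omega(\zeta,z)\,\overline{\tilde g(\zeta)}\,d\zeta.
\]
After Cauchy--Schwarz and mean-value (using also the uniform bound $\|\tilde g\|_{L^2(\Omega)}^2\le C_{E_0}K_{\Omega\setminus E}(w)$ from the explicit constants in the proof of Theorem~\ref{th:Hartogs_L2}), this yields $C|E|\sqrt{K_\Omega(z)K_{\Omega\setminus E}(w)}$; converting $K_{\Omega\setminus E}(w)$ to $K_\Omega(w)$ still requires the diagonal estimate, so you cannot bypass the case $z=w$.

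The paper takes a shorter path: it proves the diagonal case directly from the variational formula $K_\Omega(z)\ge |F(z)|^2/\|F\|_{L^2(\Omega)}^2$ with $F$ the extension of $K_{\Omega\setminus E}(\cdot,z)$, obtaining $K_{\Omega\setminus E}(z)-K_\Omega(z)\le C|E|K_\Omega(z)$ with no conversion step. For the off-diagonal case it invokes a separate monotonicity lemma (Proposition~\ref{prop:Bergman_monotonic}): the four quantities $K_\Omega(z)+K_\Omega(w)\pm 2\,\mathrm{Re}\,K_\Omega(z,w)$ and $K_\Omega(z)+K_\Omega(w)\pm 2\,\mathrm{Im}\,K_\Omega(z,w)$ have extremal characterizations and hence decrease with the domain, giving $|K_{\Omega\setminus E}(z,w)-K_\Omega(z,w)|\le (K_{\Omega\setminus E}(z)-K_\Omega(z))+(K_{\Omega\setminus E}(w)-K_\Omega(w))$ without any kernel-identity bookkeeping. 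Your approach trades this extra lemma for the explicit integral identity; both work, but the paper's avoids the $K_{\Omega\setminus E}\to K_\Omega$ conversion.
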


Finally, we recall the following Poincar\'{e}-Friderichs inequality
\begin{equation}\label{eq:Poincare_Friderichs}
\int_\Omega |\phi|^2 \leq \lambda^{-1} \int_\Omega|\nabla\phi|^2,\ \ \ \forall\,\phi\in C^\infty_0(\Omega).
\end{equation}
As is well-known, the optimal $\lambda$ is the first Dirichlet eigenvalue $\lambda_1(\Omega)$ of $-\Delta$ on $\Omega$ (w.r.t the Euclidean metric), i.e.,
$$
\lambda_1(\Omega)=\inf_{\phi\in C^\infty_0(\Omega)\setminus \{0\}} \frac{\int_\Omega |\nabla \phi|^2} {\int_\Omega |\phi|^2 }.  
$$
A classical topic in spectrum theory is to study the relation between $\lambda_1(\Omega)$ and global geometry of $\Omega\subset\mathbb{R}^n$.   A popular estimate is 
$$
\lambda_1(\Omega)\gtrsim_n |\Omega|^{-2/n}.
$$
 Under certain reasonable assumption on the boundary (e.g.,  $\partial \Omega$ is Lipschitz),  a better estimate holds:
 $$
\lambda_1(\Omega)\gtrsim \mathscr R(\Omega)^{-2},\ \ \ \mathscr R(\Omega): \text{inradius of } \Omega.
$$  
Lieb \cite{Lieb83} showed that for\/ {\it any} domain $\Omega\subsetneq \mathbb R^n$, 
\begin{equation}\label{eq:Lieb}
\lambda_1(\Omega) \ge \frac{n}{4r^2} \inf_{x\in \Omega} \frac{| B(x,r)\setminus\Omega|}{|B(x,r)|},\ \ \ \forall\,  r>0.
\end{equation}
Maz'ya-Shubin \cite{MS05} further showed that for $\Omega\subsetneq\mathbb{R}^n$ with $n\geq3$,
\begin{equation}\label{eq:MS}
\lambda_1(\Omega) \asymp_{\alpha} \mathscr R_{W,\alpha}(\Omega)^{-2},\ \ \ \forall\, 0<\alpha<1.
\end{equation}
Here $A\asymp_\alpha B$ if and only if both $A/B$ and $B/A$ are bounded by a constant depending only on $\alpha$,  $\mathscr R_{W,\alpha}$   denotes 
   the {\it interior capacity radius} (w.r.t $\alpha$)  given by 
\[
\mathscr{R}_{W,\alpha}(\Omega):= \sup\left\{r:\exists\,B(x,r)\ \text{s.t.\ } \frac{\mathcal{C}_W(\overline{B(x,r)}\setminus\Omega)}{\mathcal{C}_W (B(x,r))} \le \alpha \right\},
\]
and $\mathcal{C}_W$ stands for the Wiener capacity,  i.e., for a compact set $F\subset \mathbb R^n$,
\[
\mathcal{C}_W(F):=\inf\left\{\int_{\mathbb R^n}|\nabla\phi|^2:\phi\in C^\infty_0(\mathbb R^n), \phi|_F=1\right\}.
\]

In order to obtain analogous estimates in Bergman theory, we introduce the following quantity
$$
\kappa(\Omega):=\inf_{z\in \Omega} K_\Omega(z),\ \ \ \Omega\subsetneq \mathbb C.  
$$
The original interest of  $\kappa(\Omega)$ arises from the study of Riemann map $f:\mathbb D\rightarrow \Omega$, where $\Omega\subsetneq\mathbb{C}$ is a simply-connected domain.  Transformation rule of the Bergman kernel yields
$$
K_\mathbb D(z)= K_\Omega(f(z))|f'(z)|^2,
$$
so that the Bloch semi-norm of $f$ satisfies
$$
\|f\|_{\mathcal B}:=\sup_{z\in \mathbb D}\left\{(1-|z|^2)|f'(z)|\right\} = \frac1{\sqrt{\pi \kappa(\Omega)}}.
$$
Recently, Nazarov \cite{Nazarov12} found an interesting Bergman kernel approach to the Bourgain-Milman estimate
\[
|\mathcal{K}|\cdot|\mathcal{K}^\circ|\geq c^n\frac{4^n}{n!}
\]
where $\mathcal{K}$ is an origin-symmetric bounded convex body in $\mathbb{R}^n$, $\mathcal{K}^\circ:=\{t\in\mathbb{R}^n:\langle{x,t}\rangle\leq1\}$, and $c>0$ is a numerical constant (see also B{\l}ocki \cite{Blocki15} and Berndtsson \cite{Berndtsson22}). More precisely, Nazarov considered the tube domain $T_{\mathcal{K}}:=\{x+\mathrm{i}y:x\in\mathbb{R}^n,\ y\in K\}$. He first obtained an upper bound
\[
K_{T_{\mathcal{K}}}(0)\leq 2^nn!|\mathcal{K}^\circ|/|\mathcal{K}|,
\]
then used H\"{o}rmander's $L^2$ estimates for $\bar{\partial}$ to get a lower bound
\[
K_{T_{\mathcal{K}}}(0)\geq 8^nc^n|\mathcal{K}|^{-2},
\]
from which the Bourgain-Milman estimate immediately follows. Note that $K_{T_{\mathcal{K}}}(0)=\kappa(T_{\mathcal{K}})$, for the Bergman kernel is a convex function on the convex tube $T_{\mathcal{K}}$ and $\mathcal{K}$ is origin-symmetric.

Similar as \cite{MS05}, given a domain $\Omega\subsetneq\mathbb{C}$ we define 
\[
\mathscr{R}_{L,\alpha} (\Omega):=\sup\left\{r>0:{\mathcal{C}_l(\overline{\Delta(z,s)}\setminus\Omega)} \leq\alpha s,\ \text{for some }z\in\Omega\text{ and any }s\in(0,r)\right\},
\]
where $0<\alpha<1$ and $\Delta(z,s)=\{\zeta\in \mathbb C: |\zeta-z|<s\}$.   Again "$\mathcal{C}_l$" means the logarithmic capacity. We have the following Maz'ya-Shubin type estimate for $\kappa(\Omega)$.

\begin{theorem}\label{th:minimum_value}
For any domain $\Omega\subsetneq\mathbb C$,  we have
\[
\kappa(\Omega)\geq\frac{\alpha^2}{\pi\mathscr{R}_{L,\alpha}(\Omega)^2}.
\]
\end{theorem}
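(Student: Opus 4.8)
\emph{Proposal.} The plan is to reduce the statement to the (now classical) solution of the Suita conjecture together with one purely potential-theoretic inequality. For a domain $\Omega\subsetneq\mathbb C$ whose complement is non-polar, Błocki's theorem on the Suita conjecture (see also Guan--Zhou) gives $\pi K_\Omega(z)\ge c_{\beta,\Omega}(z)^2$ for every $z\in\Omega$, where $c_{\beta,\Omega}(z)=\exp\big(\lim_{\zeta\to z}(G_\Omega(\zeta,z)-\log|\zeta-z|)\big)$ and $G_\Omega(\cdot,z)\le0$ is the Green function with pole at $z$ (the limit exists since $G_\Omega(\zeta,z)-\log|\zeta-z|$ is harmonic near $z$). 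If $\mathbb C\setminus\Omega$ is polar then $\mathcal C_l(\overline{\Delta(z,s)}\setminus\Omega)=0$ for all $s$, so $\mathscr R_{L,\alpha}(\Omega)=\infty$ and the inequality is vacuous; hence we may assume $\mathbb C\setminus\Omega$ non-polar, so that $c_{\beta,\Omega}$ is defined and $K_\Omega>0$, and also $R:=\mathscr R_{L,\alpha}(\Omega)<\infty$. In view of the Suita inequality it suffices to prove the pointwise bound $c_{\beta,\Omega}(z)\ge\alpha/R$ for every $z\in\Omega$.

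Fix $z\in\Omega$ and set $\sigma=\sigma(z):=\sup\{r>0:\mathcal C_l(\overline{\Delta(z,s)}\setminus\Omega)\le\alpha s\ \text{for all }s\in(0,r)\}$. Taking the center in the definition of $\mathscr R_{L,\alpha}$ to be $z$ shows $\sigma\le R<\infty$. By definition of $\sigma$, for every small $\varepsilon>0$ there is $s_\varepsilon\in(0,\sigma+\varepsilon)$ with $F_\varepsilon:=\overline{\Delta(z,s_\varepsilon)}\setminus\Omega$ satisfying $\mathcal C_l(F_\varepsilon)>\alpha s_\varepsilon$; here $F_\varepsilon$ is a nonempty compact subset of $\overline{\Delta(z,s_\varepsilon)}$ not containing $z$, and $\Omega\subset\mathbb C\setminus F_\varepsilon$. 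The heart of the matter is the following, applied with $D=\Omega$, $F=F_\varepsilon$, $s=s_\varepsilon$:

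\medskip
\emph{Claim.} If $D\subsetneq\mathbb C$ is a domain with non-polar complement, $z\in D$, and $F\subset D^c\cap\overline{\Delta(z,s)}$ is compact, then $c_{\beta,D}(z)\ge\mathcal C_l(F)/s^2$.
\medskip

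To prove the Claim, let $D^\ast$ be the connected component of $\mathbb C\setminus F$ containing $z$; since $D$ is connected and contains $z$ we have $D\subset D^\ast$, hence $c_{\beta,D}(z)\ge c_{\beta,D^\ast}(z)$ by monotonicity of Green functions. The unbounded component of $\mathbb C\setminus F$ contains $\{|\zeta-z|>s\}$, so every other component lies in $\overline{\Delta(z,s)}$. If $D^\ast$ is bounded it is therefore contained in $\overline{\Delta(z,s)}$, so $c_{\beta,D^\ast}(z)\ge c_{\beta,\Delta(z,s')}(z)=1/s'$ for all $s'>s$, giving $c_{\beta,D^\ast}(z)\ge1/s\ge\mathcal C_l(F)/s^2$. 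If $D^\ast$ is unbounded, write $D^\ast=\mathbb C\setminus\widehat F$ with $\widehat F:=\mathbb C\setminus D^\ast$ compact and $F\subset\widehat F\subset\overline{\Delta(z,s)}$. Pushing forward by the inversion $T(\zeta)=1/(\zeta-z)$ (which sends $z$ to $\infty$, a puncture being removable for the Green function) and normalising the Robin constant, one identifies $c_{\beta,D^\ast}(z)=\mathcal C_l\big(T(\widehat F)\big)$. Since $|\zeta-z|\le s$ on $\widehat F$, the map $T$ expands distances there: $|T(\zeta)-T(\zeta')|=|\zeta-\zeta'|/(|\zeta-z||\zeta'-z|)\ge|\zeta-\zeta'|/s^2$. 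Feeding this into the Fekete/transfinite-diameter description of logarithmic capacity yields $\mathcal C_l(T(\widehat F))\ge\mathcal C_l(\widehat F)/s^2\ge\mathcal C_l(F)/s^2$, which proves the Claim.

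Granting the Claim, $c_{\beta,\Omega}(z)\ge\mathcal C_l(F_\varepsilon)/s_\varepsilon^2>\alpha s_\varepsilon/s_\varepsilon^2=\alpha/s_\varepsilon>\alpha/(\sigma+\varepsilon)$ for all $\varepsilon>0$; letting $\varepsilon\to0$ gives $c_{\beta,\Omega}(z)\ge\alpha/\sigma(z)\ge\alpha/R$. Combined with the Suita inequality, $K_\Omega(z)\ge c_{\beta,\Omega}(z)^2/\pi\ge\alpha^2/(\pi R^2)$, and taking the infimum over $z\in\Omega$ gives $\kappa(\Omega)\ge\alpha^2/(\pi\mathscr R_{L,\alpha}(\Omega)^2)$. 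I expect the main obstacle to be the Claim --- in particular the passage from $\Omega$ to the component $D^\ast$ of $\mathbb C\setminus F_\varepsilon$, the clean identification $c_{\beta,D^\ast}(z)=\mathcal C_l(T(\widehat F))$ through the inversion (getting the normalisation of the Robin constant exactly right), and checking that the transfinite-diameter inequality is preserved in the limit; the distance-expansion estimate for $T$ and the reduction via Suita are routine.
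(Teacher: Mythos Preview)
Your proof is correct and follows essentially the same route as the paper: reduce to the Suita inequality $K_\Omega(z)\ge c_\Omega(z)^2/\pi$ (B\l ocki), then bound $c_\Omega(z)$ from below via the inversion $\zeta\mapsto 1/(\zeta-z)$, using that $c_\Omega(z)$ equals the logarithmic capacity of the image of $\mathbb C_\infty\setminus\Omega$ and that the inverse map is $s^2$-Lipschitz on $\Phi_z(\overline{\Delta(z,s)})$, so that $c_\Omega(z)\ge \mathcal C_l(\overline{\Delta(z,s)}\setminus\Omega)/s^2>\alpha/s$ whenever $s$ exceeds the admissible radius at $z$. Your introduction of the pointwise radius $\sigma(z)=\sup\{r:\mathcal C_l(\overline{\Delta(z,s)}\setminus\Omega)\le\alpha s\ \text{for all }s<r\}\le\mathscr R_{L,\alpha}(\Omega)$ and the choice of a $z$-dependent $s_\varepsilon$ is in fact slightly more careful than the paper's formulation (which selects a single $r_j>\mathscr R_{L,\alpha}(\Omega)$ for all $z$), and your case split on whether the component $D^\ast$ is bounded is harmless extra detail; the paper subsumes it by working directly on $\mathbb C_\infty$.
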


For a domain $\Omega\subsetneq\mathbb{C}$, the argument of Maz'ya-Shubin also gives an estimate of $\lambda_1(\Omega)$ by using $\mathscr{R}_{L,\alpha}$. For reader's convenience and for our purpose, we present a proof of the upper estimate in a slightly weak form in the appendix (see \S\,\ref{sec:appendix}), which will be used to give the following relationship between $\kappa(\Omega)$ and $\lambda_1(\Omega)$\footnote{The authors are indebted to Professor Siqi Fu for proposing this question during the Shenzhen International Workshop on SCV and CR Geometry in 2024.}.

\begin{corollary}\label{cor:Minimum_eigenvalue}
There exists a numerical constant $c_0$ such that
\[
\kappa(\Omega)\geq c_0\lambda_1(\Omega)
\]
for any domain $\Omega\subsetneq\mathbb C$.
\end{corollary}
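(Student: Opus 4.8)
The plan is to route both quantities through the common geometric gauge $\mathscr{R}_{L,\alpha}(\Omega)$. Theorem \ref{th:minimum_value} already supplies one half of what is needed, namely the lower bound $\kappa(\Omega)\ge \alpha^{2}/\bigl(\pi\,\mathscr{R}_{L,\alpha}(\Omega)^{2}\bigr)$, valid for every $0<\alpha<1$. What remains is a matching \emph{upper} bound for the first eigenvalue of the shape $\lambda_1(\Omega)\le C_\alpha\,\mathscr{R}_{L,\alpha}(\Omega)^{-2}$ — precisely the Maz'ya--Shubin type estimate proved in the appendix (in the "slightly weak form" alluded to there). Granting that estimate, the corollary is obtained by simply composing the two inequalities.

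Concretely, fix a convenient admissible value of $\alpha$, say $\alpha=1/2$ (or whatever value the appendix estimate is stated for), and denote by $C_{1/2}$ the constant in $\lambda_1(\Omega)\le C_{1/2}\,\mathscr{R}_{L,1/2}(\Omega)^{-2}$. Then
\[
\kappa(\Omega)\ \ge\ \frac{1}{4\pi\,\mathscr{R}_{L,1/2}(\Omega)^{2}}\ \ge\ \frac{1}{4\pi\,C_{1/2}}\,\lambda_1(\Omega),
\]
so the corollary holds with the numerical constant $c_0:=1/(4\pi\,C_{1/2})$. (The degenerate cases, e.g. $\Omega$ a half-plane, are consistent: there $\mathscr{R}_{L,\alpha}(\Omega)=\infty$ and both $\kappa(\Omega)$ and $\lambda_1(\Omega)$ vanish.)

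Thus the only real content is the appendix inequality $\lambda_1(\Omega)\lesssim_\alpha \mathscr{R}_{L,\alpha}(\Omega)^{-2}$, which I expect to be the main obstacle. The strategy there is the classical one: given any $r<\mathscr{R}_{L,\alpha}(\Omega)$ there is a disk $\Delta(z_0,r)$ with $\mathcal{C}_l\bigl(\overline{\Delta(z_0,s)}\setminus\Omega\bigr)\le\alpha s$ for all $s\in(0,r)$; one builds a test function $\phi\in C_0^\infty(\Omega)$ that is comparable to the indicator of $\Delta(z_0,r)$ yet vanishes near the obstruction set $\overline{\Delta(z_0,r)}\setminus\Omega$, so that its Rayleigh quotient is $\lesssim_\alpha r^{-2}$; letting $r\uparrow\mathscr{R}_{L,\alpha}(\Omega)$ finishes. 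The delicate step — and the reason a weakened form is enough — is potential-theoretic: one takes the smooth cutoff of $\Delta(z_0,r)$, subtracts a truncation of the logarithmic equilibrium potential of the obstruction set to force $\phi$ to die where $\Omega$ is missing, and then must turn the scale-by-scale capacity bound $\mathcal{C}_l\le\alpha s$ into a quantitative Dirichlet-energy bound for the correction and a quantitative lower bound $\int|\phi|^{2}\gtrsim r^{2}$, uniformly in $\Omega$; this calls for a Wiener-type dyadic summation over annuli plus standard truncated-logarithmic-potential estimates in the plane, after which a final mollification produces a genuine element of $C_0^\infty(\Omega)$. None of this involves complex analysis, which is why it is placed in the appendix, but it is exactly this step that makes the resulting $c_0$ numerical, and once it is in hand Corollary \ref{cor:Minimum_eigenvalue} follows from the displayed chain.
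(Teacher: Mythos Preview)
Your proposal is correct and matches the paper's approach exactly: the corollary is obtained by combining Theorem~\ref{th:minimum_value} with the Maz'ya--Shubin type upper bound $\lambda_1(\Omega)\le C_\alpha\,\mathscr{R}_{L,\alpha}(\Omega)^{-2}$ proved in the appendix (Theorem~\ref{th:MS_upper}, valid for $0<\alpha<e^{-1/2}$, so your choice $\alpha=1/2$ is admissible). Your sketch of the appendix argument is also on target, though the paper's actual construction uses the Green equilibrium potential relative to a large disk $\Delta(0,R)$ (rather than the logarithmic potential directly) and a single-scale estimate via the Poisson--Jensen formula rather than a dyadic Wiener-type summation.
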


It is natural to ask
\begin{question}
Does there exist some numerical constant $C_0>0$ such that
\[
\kappa(\Omega)\leq C_0\lambda_1(\Omega)
\]
for any domain $\Omega\subsetneq\mathbb C$?
\end{question}

For high-dimensional cases, Theorem \ref{th:minimum_value} together with the Ohsawa-Takegoshi extension theorem yield the following Lieb-type estimate for $\kappa(\Omega)$.

  \begin{theorem}\label{th:minimum_value_2}
Let   $\Omega\subset \mathbb{C}^n$ be a bounded pseudoconvex domain.  Define 
$$
\mathscr V(\Omega):=\inf_{w\in \partial \Omega,r>0} \frac{|B(w,r)\setminus\Omega|}{|B(w,r)|}.  
$$
Then
$$
\kappa(\Omega) \ge \frac{C_n \mathscr V(\Omega)^2}{\mathscr R(\Omega)^2\mathscr D(\Omega)^{2n-2}},
$$
where $\mathscr R(\Omega)$ and $\mathscr D(\Omega)$ denote the inradius and the diameter of\/ $\Omega$ respectively.
\end{theorem}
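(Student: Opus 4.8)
\emph{Reduction to a planar slice.} Fix $z\in\Omega$ and a complex line $L$ through $z$. Applying the Ohsawa--Takegoshi extension theorem once in each of the $n-1$ complex directions transverse to $L$, one extends every $f\in A^2(\Omega\cap L)$ to $F\in A^2(\Omega)$ with $\|F\|_{L^2(\Omega)}^2\le C_n\,\mathscr D(\Omega)^{2n-2}\,\|f\|_{L^2(\Omega\cap L)}^2$ (each step costs a universal multiple of the square of the width of $\Omega$ in the corresponding direction, hence a universal multiple of $\mathscr D(\Omega)^2$). Feeding in the extremal function for $K_{\Omega\cap L}(z)$ and invoking the extremal characterization of the Bergman kernel gives
\[
K_\Omega(z)\ \ge\ \frac{K_{\Omega\cap L}(z)}{C_n\,\mathscr D(\Omega)^{2n-2}}\ \ge\ \frac{\kappa(\Omega\cap L)}{C_n\,\mathscr D(\Omega)^{2n-2}}\,.
\]
By Theorem~\ref{th:minimum_value}, it thus suffices to produce, for each $z\in\Omega$, a complex line $L\ni z$ and a parameter $\alpha$ comparable to $\mathscr V(\Omega)$ with $\mathscr R_{L,\alpha}(\Omega\cap L)\lesssim_n\mathscr R(\Omega)$.

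\emph{Turning volume density into capacity density on a slice.} For $\zeta\in\Omega$ put $d=\delta_\Omega(\zeta)\le\mathscr R(\Omega)$ and pick $w\in\partial\Omega$ with $|w-\zeta|=d$. Since $B(w,d)\subset B(\zeta,2d)$, the volume-density hypothesis yields $|B(\zeta,2d)\setminus\Omega|\ge\mathscr V(\Omega)\,|B(w,d)|\gtrsim_n\mathscr V(\Omega)\,d^{2n}$. Averaging over the complex lines through $\zeta$ by the polar-coordinate identity
\[
\int_{\mathbb{CP}^{n-1}}\mathrm{Area}_2\big((B(\zeta,2d)\setminus\Omega)\cap L_v\big)\,d\mu(v)\ =\ c_n\!\int_{B(\zeta,2d)\setminus\Omega}\frac{dV(x)}{|x-\zeta|^{2n-2}}\ \gtrsim_n\ \mathscr V(\Omega)\,d^{2}
\]
produces a direction $v$ for which the two-real-dimensional slice of $\Omega^c$ by $L:=L_v$ has $2$-area $\gtrsim_n\mathscr V(\Omega)\,d^2$ inside $\Delta(\zeta,2d)$; since the left side varies continuously with $\zeta$, this persists, with comparable constants, for all $\zeta'$ in a small $2$-disk about $\zeta$ in $L$. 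A planar set $F$ lying in a disk of radius $s$ obeys $\mathcal C_l(F)\gtrsim\mathrm{Area}(F)/\mathrm{diam}(F)\gtrsim\mathrm{Area}(F)/s$, so at the scale $s\asymp d$ one obtains
\[
\mathcal C_l\big(\overline{\Delta(\zeta',s)}\setminus(\Omega\cap L)\big)\ \gtrsim_n\ \mathscr V(\Omega)\,s\,.
\]

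\emph{Conclusion.} Choosing $\alpha$ a small enough multiple of $\mathscr V(\Omega)$, the last inequality forces the defining condition of $\mathscr R_{L,\alpha}$ to fail at a scale $\asymp\delta_\Omega(\zeta')\le\mathscr R(\Omega)$; running the previous step at each point $\zeta'$ of the slice (invoking the volume-density hypothesis at the boundary point nearest $\zeta'$, and using that $\Omega\cap L\subset\Delta(z,\mathscr D(\Omega))$ and that every point of $\Omega$ lies within $\mathscr R(\Omega)$ of $\partial\Omega$ to control the scales where the sliced complement first becomes bulky) gives $\mathscr R_{L,\alpha}(\Omega\cap L)\lesssim_n\mathscr R(\Omega)$. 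Hence $\kappa(\Omega\cap L)\ge\alpha^2/(\pi\,\mathscr R_{L,\alpha}(\Omega\cap L)^2)\gtrsim_n\mathscr V(\Omega)^2/\mathscr R(\Omega)^2$, and combining with the reduction of the first paragraph yields the asserted estimate.

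\emph{Main obstacle.} The crux is the passage from the purely measure-theoretic lower bound on $|B\setminus\Omega|$ in $\mathbb C^n$ to a lower bound on the \emph{logarithmic capacity} of a \emph{two-real-dimensional} slice, and arranging it for a \emph{single} complex line that behaves well not only near $z$ but along the whole of $\Omega\cap L$ (a fixed complex line may meet a bulky complement in a set of tiny $2$-area, so a priori $\Omega\cap L$ could be a badly behaved planar domain). Integral geometry over $\mathbb{CP}^{n-1}$ settles the choice of direction, the isoperimetric-type inequality for logarithmic capacity settles area versus capacity, and it is precisely in the latter — where an area density $\mathscr V(\Omega)$ degrades to a capacity density of order $\mathscr V(\Omega)$ once one insists on an estimate valid at all scales and centres — that the quadratic loss in $\mathscr V(\Omega)$ enters. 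The remaining inputs, the iterated Ohsawa--Takegoshi estimate with constant $\mathscr D(\Omega)^{2n-2}$ and Theorem~\ref{th:minimum_value}, are used off the shelf.
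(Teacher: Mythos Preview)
Your reduction via Ohsawa--Takegoshi and the integral-geometric step producing, for each $z\in\Omega$, a complex line $L_z\ni z$ with
$\mathcal C_l\big((L_z\cap\overline{B(z,2\delta_\Omega(z))})\setminus\Omega\big)\gtrsim_n\mathscr V(\Omega)\,\delta_\Omega(z)$
are exactly what the paper uses (the latter is quoted from \cite{Chen23}). The gap is in how you exploit this line. You try to feed it into Theorem~\ref{th:minimum_value} via $\kappa(\Omega\cap L)$, but $\mathscr R_{L,\alpha}(\Omega\cap L)\lesssim_n\mathscr R(\Omega)$ asks that for \emph{every} $\zeta'\in\Omega\cap L$ the capacity density $\mathcal C_l(\overline{\Delta(\zeta',s)}\setminus(\Omega\cap L))\gtrsim\alpha s$ hold at some scale $s\lesssim\mathscr R(\Omega)$, all on the \emph{same} line $L$. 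Your averaging over $\mathbb{CP}^{n-1}$ is centred at $\zeta$ and only says that \emph{some} direction is good for that particular centre; the claim that ``running the previous step at each point $\zeta'$ of the slice'' yields the same conclusion is circular, since redoing the averaging at $\zeta'$ produces a different good line $L_{\zeta'}$, not the original $L$. For a general bounded pseudoconvex $\Omega$ there is no reason a single complex line slices the complement in a uniformly thick way at every point, so the passage from the pointwise estimate to a bound on $\mathscr R_{L,\alpha}(\Omega\cap L)$ fails.

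The paper sidesteps this entirely by \emph{not} going through $\kappa(\Omega\cap L)$ or Theorem~\ref{th:minimum_value} as stated. Instead it uses the pointwise content of that theorem's proof: from the single-scale estimate at $z$ one gets directly
\[
c_{\Omega\cap L_z}(z)\ \ge\ \frac{\mathcal C_l\big((L_z\cap\overline{B(z,2\delta_\Omega(z))})\setminus\Omega\big)}{4\,\delta_\Omega(z)^2}\ \gtrsim_n\ \frac{\mathscr V(\Omega)}{\delta_\Omega(z)},
\]
and then B\l ocki's inequality $K_{\Omega\cap L_z}(z)\ge c_{\Omega\cap L_z}(z)^2/\pi$ together with the Ohsawa--Takegoshi bound yields $K_\Omega(z)\gtrsim_n\mathscr V(\Omega)^2/(\mathscr D(\Omega)^{2n-2}\delta_\Omega(z)^2)$. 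Since the line is allowed to vary with $z$, no uniformity over the slice is ever needed. Your argument becomes correct if you replace ``$\kappa(\Omega\cap L)$ via Theorem~\ref{th:minimum_value}'' by ``$K_{\Omega\cap L_z}(z)$ via the Robin constant at $z$''; the rest of your outline then matches the paper's Proposition~\ref{prop:minimum_value_2-1}.
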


\section{Hardy type inequalities and $L^2$ estimates for the $\bar{\partial}$-equation}

\subsection{An a priori estimate}

Let $\Omega$ be a domain in $\mathbb{C}^n=\mathbb{R}^{2n}$. Suppose the following Hardy-type inequality
\begin{equation}\label{eq:Hardy}
\int_\Omega \phi^2 \eta\le \int_\Omega |\nabla \phi|^2,\ \ \ \forall\,\phi\in C^\infty_0(\Omega)
\end{equation}
holds, where $\eta\geq0$ is a measurable function. The special case when $\eta=\frac{(n-1)^2}{|z|^2}$ is the standard Hardy inequality \eqref{eq:Hardy_1}.

For $z_k=x_k+\mathrm{i}y_k$ ($1\leq k\leq n$), we write
\begin{equation}\label{eq:complex_derivative}
\left|\frac{\partial\phi}{\partial\bar{z}_k}\right|^2
= \frac{1}{4}\left(\left|\frac{\partial\phi}{\partial{x_k}}\right|^2+\left|\frac{\partial\phi}{\partial{y_k}}\right|^2\right) - \frac{\mathrm{i}}{2}\left(\frac{\partial\phi}{\partial{x_k}}\frac{\partial\bar{\phi}}{\partial{y_k}}-\frac{\partial\phi}{\partial{y_k}}\frac{\partial\bar{\phi}}{\partial{x_k}}\right).
\end{equation}
Since
\[
\int_\Omega\frac{\partial\phi}{\partial{x_k}}\frac{\partial\bar{\phi}}{\partial{y_k}}
=-\int_\Omega\frac{\partial^2\phi}{\partial{y_k}\partial{x_k}}\bar{\phi}
=\int_\Omega\frac{\partial\phi}{\partial{y_k}}\frac{\partial\bar{\phi}}{\partial{x_k}},
\]
it follows from \eqref{eq:Hardy} and \eqref{eq:complex_derivative} that
\begin{equation}\label{eq:Hardy_complex}
\sum^n_{k=1}\int_\Omega\left|\frac{\partial\phi}{\partial\bar{z}_k}\right|^2
= \frac{1}{4}\int_\Omega|\nabla\phi|^2
\geq \frac{1}{4}\int_\Omega|\phi|^2\eta,
\end{equation}

Denote by $\mathcal D_{(0,q)}(\Omega)$ the set of smooth $(0,q)$-forms with compact supports in $\Omega$. Let $\varphi$ be a real-valued smooth function on $\Omega$ and $L^2_{(0,q)}(\Omega,\varphi)$ the completion of $\mathcal D_{(0,q)}(\Omega)$ with respect to the weighted $L^2$ inner product
\[
(u,v)_\varphi={\sum_{|J|=q}}'\int_\Omega u_J\overline{v}_Je^{-\varphi},
\]
where $u=\sum'_{|J|=q}u_Jd\bar{z}_J$ and $v=\sum'_{|J|=q}v_Jd\bar{z}_J$. Then $\bar{\partial}: L^2_{(0,q)}(\Omega,\varphi)\rightarrow L^2_{(0,q+1)}(\Omega,\varphi)$ gives a  densely defined and closed operator such that $\mathrm{Dom}\,(\bar{\partial})$ consists of all $u\in{L^2_{(0,q)}(\Omega,\varphi)}$ with $\bar{\partial}u\in{L^2_{(0,q+1)}(\Omega,\varphi)}$ in the sense of distributions. Let $\bar{\partial}_\varphi^\ast$ denote the Hilbert space adjoint of $\bar{\partial}$ (write $\bar{\partial}^\ast:=\bar{\partial}^\ast_0$). Let $q=1$. For any $\omega=\sum^n_{j=1}\omega_j d\bar{z}_j\in \mathcal D_{(0,1)}(\Omega)$, the Morrey-Kohn-H\"{o}rmander formula asserts that
\[
\sum^n_{j,k=1}\int_\Omega\frac{\partial^2\varphi}{\partial{z_j}\partial\bar{z}_k}\omega_j\bar{\omega}_ke^{-\varphi}+\sum^n_{j,k=1}\int_\Omega\left|\frac{\partial \omega_j}{\partial\bar{z}_k}\right|^2e^{-\varphi}=\int_\Omega|\bar{\partial}\omega|^2e^{-\varphi}+\int_\Omega|\bar{\partial}^\ast_\varphi \omega|^2e^{-\varphi}.
\]
This combined with \eqref{eq:Hardy_complex} gives

\begin{proposition}
If \eqref{eq:Hardy} holds , then
\begin{equation}\label{eq:Hardy_MKH}
\int_\Omega|\omega|^2\eta
\leq4\int_\Omega|\bar{\partial}\omega|^2+4\int_\Omega|\bar{\partial}^*\omega|^2,\ \ \ \forall\,\omega\in\mathcal{D}_{(0,1)}(\Omega).
\end{equation}
\end{proposition}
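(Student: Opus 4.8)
The plan is to assemble the statement from two facts already in hand: the Morrey--Kohn--H\"ormander formula, applied with the trivial weight $\varphi\equiv 0$, and the componentwise Hardy estimate \eqref{eq:Hardy_complex}. First, set $\varphi\equiv 0$ in the Morrey--Kohn--H\"ormander identity recorded just above. The Hessian/curvature term $\sum_{j,k}\int_\Omega \frac{\partial^2\varphi}{\partial z_j\partial\bar z_k}\omega_j\bar\omega_k e^{-\varphi}$ then vanishes identically, and the identity collapses to
\[
\sum_{j,k=1}^n\int_\Omega\left|\frac{\partial\omega_j}{\partial\bar z_k}\right|^2=\int_\Omega|\bar\partial\omega|^2+\int_\Omega|\bar\partial^\ast\omega|^2,\qquad \omega=\sum_{j=1}^n\omega_j\,d\bar z_j\in\mathcal D_{(0,1)}(\Omega),
\]
with $\bar\partial^\ast=\bar\partial^\ast_0$ the unweighted adjoint and no boundary contributions, since every $\omega_j\in C_0^\infty(\Omega)$.

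Next I would apply \eqref{eq:Hardy_complex} to each coefficient $\omega_j$. Although \eqref{eq:Hardy_complex} is written for a scalar test function $\phi$, its derivation via \eqref{eq:complex_derivative} together with the integration by parts that cancels the cross terms goes through for complex-valued $\phi$; alternatively one may simply apply \eqref{eq:Hardy} separately to $\operatorname{Re}\omega_j$ and $\operatorname{Im}\omega_j$ and add, using $|\nabla\omega_j|^2=|\nabla\operatorname{Re}\omega_j|^2+|\nabla\operatorname{Im}\omega_j|^2$ and $|\omega_j|^2=|\operatorname{Re}\omega_j|^2+|\operatorname{Im}\omega_j|^2$. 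This yields
\[
\sum_{k=1}^n\int_\Omega\left|\frac{\partial\omega_j}{\partial\bar z_k}\right|^2\ \geq\ \frac14\int_\Omega|\omega_j|^2\eta,\qquad 1\le j\le n.
\]
Summing over $j$ and using $|\omega|^2=\sum_{j=1}^n|\omega_j|^2$ gives $\sum_{j,k}\int_\Omega|\partial\omega_j/\partial\bar z_k|^2\ge\frac14\int_\Omega|\omega|^2\eta$. Combining this with the collapsed Morrey--Kohn--H\"ormander identity from the previous step produces $\frac14\int_\Omega|\omega|^2\eta\le\int_\Omega|\bar\partial\omega|^2+\int_\Omega|\bar\partial^\ast\omega|^2$, which is exactly \eqref{eq:Hardy_MKH} after multiplication by $4$.

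There is no real obstacle here: the argument is a short assembly of two already-available facts. The only point deserving a moment's care is the passage from the scalar inequality \eqref{eq:Hardy_complex} to its vector-valued version for the tuple $(\omega_1,\dots,\omega_n)$, which is immediate from the additivity of both sides over real and imaginary parts and over the index $j$; the hypothesis $\eta\ge 0$ is what makes summing these inequalities harmless. One should also keep in mind that this is only an a priori estimate on $\mathcal D_{(0,1)}(\Omega)$, which is all that is needed at this stage; the usual density and closure arguments extending it to the maximal domain of $\bar\partial$ will be invoked later, where solvability of $\bar\partial$ is actually used.
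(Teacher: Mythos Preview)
Your proof is correct and follows exactly the approach indicated in the paper: specialize the Morrey--Kohn--H\"ormander identity to $\varphi\equiv 0$ so the curvature term drops out, then apply the componentwise Hardy estimate \eqref{eq:Hardy_complex} to each $\omega_j$ and sum. The paper states this in a single line (``This combined with \eqref{eq:Hardy_complex} gives\ldots''), and your write-up simply makes the two steps explicit, including the harmless passage from real to complex test functions.
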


\subsection{The case $\Omega=\mathbb{C}^n$}

Let us recall the following classical result.

\begin{lemma}[Hardy inequality]\label{lm:Hardy}
Let $H\subset \mathbb R^{N}$ be an affine-linear subspace with codimension   $m\ge 3$.   Define $d_H=d(\cdot,H)$.  Then we have
\[
\frac{(m-2)^2}4 \int_{\mathbb R^{N}} \frac{|\phi|^2}{d_H^2} \le \int_{\mathbb R^{N}} |\nabla \phi|^2,\ \ \ \forall\,\phi\in C^\infty_0(\mathbb{R}^N).
\]
\end{lemma}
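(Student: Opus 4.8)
The plan is to deduce the estimate from the scalar Hardy inequality \eqref{eq:Hardy_1} by slicing. Since both sides of the claimed inequality, as well as the function $d_H$, are invariant under affine isometries of $\mathbb{R}^N$, I may assume $H=\{0\}\times\mathbb{R}^{N-m}$ and write $x=(x',x'')\in\mathbb{R}^m\times\mathbb{R}^{N-m}$, so that $d_H(x)=|x'|$.

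First, for each fixed $x''\in\mathbb{R}^{N-m}$ the function $x'\mapsto\phi(x',x'')$ lies in $C^\infty_0(\mathbb{R}^m)$; applying the classical Hardy inequality \eqref{eq:Hardy_1} in $\mathbb{R}^m$ (valid because $m\ge 3$) with its sharp constant $(m-2)^2/4$ gives
\[
\frac{(m-2)^2}{4}\int_{\mathbb{R}^m}\frac{|\phi(x',x'')|^2}{|x'|^2}\,dx'
\le\int_{\mathbb{R}^m}|\nabla_{x'}\phi(x',x'')|^2\,dx'.
\]
Here each slice integral on the left is finite since $|x'|^{-2}\in L^1_{\mathrm{loc}}(\mathbb{R}^m)$ when $m>2$ and $\phi$ has compact support. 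Then I would integrate this inequality over $x''\in\mathbb{R}^{N-m}$ and invoke Fubini: the left-hand side becomes $\frac{(m-2)^2}{4}\int_{\mathbb{R}^N}|\phi|^2/d_H^2$, while the right-hand side is at most $\int_{\mathbb{R}^N}|\nabla_{x'}\phi|^2\le\int_{\mathbb{R}^N}|\nabla\phi|^2$ because $|\nabla_{x'}\phi|^2=\sum_{j=1}^m|\partial\phi/\partial x_j|^2\le|\nabla\phi|^2$. This is exactly the asserted inequality.

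There is no serious obstacle here: the only point to watch is the integrability of $d_H^{-2}$ near $H$, which is precisely guaranteed by the hypothesis $m\ge 3$ and is what makes the Fubini step legitimate. If one prefers a self-contained argument avoiding \eqref{eq:Hardy_1}, the same bound follows from the Rellich--Pohozaev vector-field method: testing against $V=(x'/|x'|^2,0)$, which satisfies $\operatorname{div}V=(m-2)/|x'|^2$ and $|V|=1/|x'|$, integrating by parts on $\{|x'|>\varepsilon\}$ and letting $\varepsilon\downarrow0$ (the boundary contribution is $O(\varepsilon^{m-2})$ and vanishes), one obtains $(m-2)\int|\phi|^2/|x'|^2\le 2\int|\phi|\,|\nabla\phi|/|x'|$, and the Cauchy--Schwarz inequality then yields the constant $(m-2)^2/4$.
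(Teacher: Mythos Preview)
Your proposal is correct. The slicing argument is valid: reducing to $H=\{0\}\times\mathbb{R}^{N-m}$, freezing $x''$, applying the classical Hardy inequality in $\mathbb{R}^m$, and then integrating in $x''$ via Fubini gives exactly the claimed bound, with the observation $|\nabla_{x'}\phi|^2\le|\nabla\phi|^2$ handling the right-hand side.

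This is a genuinely different route from the paper's. The paper gives a self-contained argument built around the negative subharmonic function $\psi(x)=-|x'|^{2-m}$: starting from $0\le\int\phi^2\,\Delta\psi/(-\psi)$, integrating by parts, and using Cauchy--Schwarz together with the identity $|\nabla\psi|^2/\psi^2=(m-2)^2|x'|^{-2}$. Your slicing proof is shorter and more elementary once \eqref{eq:Hardy_1} is taken as known, but it treats the lemma as a formal corollary of the point case. The paper's approach, by contrast, makes explicit the link between the Hardy inequality and the existence of a negative subharmonic function (non-parabolicity of $\mathbb{R}^m$ for $m\ge3$), which is the thematic point emphasized in the introduction when explaining why Hartogs extension is tied to hyperbolicity. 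Your parenthetical Rellich--Pohozaev alternative with $V=(x'/|x'|^2,0)$ is in fact close in spirit to the paper's computation, since $\nabla\psi/\psi=-(m-2)V$; the paper just packages the same integration by parts through the potential $\psi$ rather than the vector field $V$.
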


\begin{proof}
For the sake of completeness,  we still provide a proof here.  We may assume that $\phi$ is real-valued and $H=\{x'=0\}$ where $x'=(x_1,\cdots,x_m)$.  Then we have $d_H(x)=|x'|$ and the function $\psi(x)=\psi(x')=-|x'|^{2-m}$ is subharmonic on $\mathbb R^m$,  hence is subharmonic on $\mathbb R^N$ (in particular,  $\mathbb R^N$ is non-parabolic).  Thus
$$
0\le \int_{\mathbb R^N} \phi^2\cdot \frac{\Delta \psi}{-\psi}=- \int_{\mathbb R^N} \nabla \psi \cdot \nabla\left(\frac{\phi^2}{-\psi}\right)=2 \int_{\mathbb R^N} \frac{\phi}{\psi}\cdot\nabla \psi\cdot\nabla \phi-\int_{\mathbb R^N} \frac{\phi^2}{\psi^2}\cdot |\nabla \psi|^2,
$$
so that
\begin{eqnarray*}
\int_{\mathbb R^N} \frac{\phi^2}{\psi^2}\cdot |\nabla \psi|^2 & \le & -2\int_{\mathbb R^N} \frac{\phi}{\psi}\cdot\nabla \psi\cdot\nabla \phi\\
& \le & \frac12 \int_{\mathbb R^N} \frac{\phi^2}{\psi^2}\cdot |\nabla \psi|^2 + 2\int_{\mathbb R^N} |\nabla \phi|^2,
\end{eqnarray*}
that is
$$
\int_{\mathbb R^N} \frac{\phi^2}{\psi^2}\cdot |\nabla \psi|^2\le 4\int_{\mathbb R^N} |\nabla \phi|^2.  
$$
On the other hand,  a straightforward calculation gives $|\nabla \psi|^2/\psi^2=(m-2)^2 |x'|^{-2}$,  hence we are done.
\end{proof}

In particular, \eqref{eq:Hardy} holds for $\Omega=\mathbb{C}^n=\mathbb{R}^{2n}$ ($n\geq2$) and $\eta=(m-2)^2d_H^{-2}/4$ ($m\geq3$), i.e.,
\begin{equation}\label{eq:Hardy_MKH_Cn}
\int_\Omega\frac{|\omega|^2}{d_H^2}\leq\frac{16}{(m-2)^2}\left(\int_\Omega|\bar{\partial}\omega|^2+\int_\Omega|\bar{\partial}^\ast \omega|^2\right),\ \ \ \forall\,\omega\in\mathcal{D}_{(0,1)}(\mathbb{C}^n).
\end{equation}
Since $\mathcal{D}_{(0,1)}(\mathbb{C}^n)$ is dense in $\mathrm{Dom}(\bar{\partial})\cap\mathrm{Dom}(\bar{\partial}^\ast)$ with respect to the graph norm
\begin{equation}\label{eq:graph_norm}
\omega\mapsto\left(\int_{\mathbb C^n}|\omega|^2+\int_{\mathbb C^n}|\bar{\partial}\omega|^2+\int_{\mathbb C^n}|\bar{\partial}^\ast \omega|^2\right)^{1/2},
\end{equation}
we see that \eqref{eq:Hardy_MKH_Cn} also holds for $\omega\in\mathrm{Dom}(\bar{\partial})\cap\mathrm{Dom}(\bar{\partial}^\ast)$. This leads to the following

\begin{proposition}\label{prop:L2_Cn}
Let $H\subset \mathbb R^{2n}=\mathbb C^n$ be an affine-linear subspace with real codimension $m\ge 3$.  Then for any  $\bar{\partial}-$closed $(0,1)$-form $v$ on $\mathbb C^n$ with $\int_{\mathbb C^n} |v|^2<\infty$ and $\int_{\mathbb C^n} |v|^2 d_H^2<\infty$,  there exists a function $u$ on $\mathbb C^n$ such that  $\bar{\partial}u=v$ and
\begin{equation}\label{eq:L2_estimate_Cn}
\int_{\mathbb C^n} |u|^2 \le \frac{16}{(m-2)^2} \int_{\mathbb C^n} |v|^2 d_H^2.
\end{equation}
\end{proposition}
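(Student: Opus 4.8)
\medskip
\noindent\textbf{Proof strategy.}
The plan is to deduce both the solvability of $\bar\partial u=v$ and the estimate \eqref{eq:L2_estimate_Cn} from the a priori inequality \eqref{eq:Hardy_MKH_Cn} via the standard H\"ormander duality scheme, with the Hardy weight $\eta:=(m-2)^2/(4d_H^2)$ playing the role usually played by the complex Hessian of a plurisubharmonic weight. Write $T=\bar\partial\colon L^2(\mathbb C^n)\to L^2_{(0,1)}(\mathbb C^n)$ and $S=\bar\partial\colon L^2_{(0,1)}(\mathbb C^n)\to L^2_{(0,2)}(\mathbb C^n)$; both are closed and densely defined, $S\circ T=0$, and $T^*$ is the operator $\bar\partial^*$ appearing in \eqref{eq:Hardy_MKH_Cn}. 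Since $\int_{\mathbb C^n}|v|^2<\infty$ and $v$ is $\bar\partial$-closed, we have $v\in\ker S$.

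First I would fix $\omega\in\mathrm{Dom}(\bar\partial^*)$ and take the orthogonal decomposition $\omega=\omega_1\oplus\omega_2$ with $\omega_1\in\ker S$ and $\omega_2\in(\ker S)^\perp$. From $\mathrm{Range}(T)\subset\ker S$ one gets $(\ker S)^\perp\subset(\mathrm{Range}\,T)^\perp=\ker T^*=\ker\bar\partial^*$, so $\omega_2\in\mathrm{Dom}(\bar\partial^*)$ with $\bar\partial^*\omega_2=0$; hence $\omega_1=\omega-\omega_2$ lies in $\mathrm{Dom}(\bar\partial)\cap\mathrm{Dom}(\bar\partial^*)$ with $\bar\partial\omega_1=0$ and $\bar\partial^*\omega_1=\bar\partial^*\omega$. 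Applying \eqref{eq:Hardy_MKH_Cn} to $\omega_1$ --- legitimate because, as recorded in the excerpt, \eqref{eq:Hardy_MKH_Cn} passes from $\mathcal D_{(0,1)}(\mathbb C^n)$ to all of $\mathrm{Dom}(\bar\partial)\cap\mathrm{Dom}(\bar\partial^*)$ by density in the graph norm \eqref{eq:graph_norm} (a routine Fatou argument absorbs the locally unbounded weight $d_H^{-2}$, which sits only on the left-hand side) --- gives $\int_{\mathbb C^n}|\omega_1|^2/d_H^2\le\frac{16}{(m-2)^2}\int_{\mathbb C^n}|\bar\partial^*\omega|^2$. Since $v\in\ker S$ and $\omega_2\perp\ker S$, we have $(v,\omega)=(v,\omega_1)$, and the weighted Cauchy--Schwarz inequality gives
\[
|(v,\omega)|^2=|(v,\omega_1)|^2\le\Big(\int_{\mathbb C^n}|v|^2 d_H^2\Big)\Big(\int_{\mathbb C^n}\frac{|\omega_1|^2}{d_H^2}\Big)\le\frac{16}{(m-2)^2}\Big(\int_{\mathbb C^n}|v|^2 d_H^2\Big)\int_{\mathbb C^n}|\bar\partial^*\omega|^2.
\]

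With this bound the conclusion is routine: it shows that $\bar\partial^*\omega\mapsto(v,\omega)$ is a well-defined bounded functional on $\mathrm{Range}(\bar\partial^*)\subset L^2(\mathbb C^n)$, of norm at most $\tfrac{4}{m-2}\big(\int_{\mathbb C^n}|v|^2 d_H^2\big)^{1/2}$; extending it by $0$ on the orthogonal complement of $\mathrm{Range}(\bar\partial^*)$ and invoking the Riesz representation theorem produces $u\in L^2(\mathbb C^n)$ with $(u,\bar\partial^*\omega)=(v,\omega)$ for every $\omega\in\mathrm{Dom}(\bar\partial^*)$ and $\int_{\mathbb C^n}|u|^2\le\frac{16}{(m-2)^2}\int_{\mathbb C^n}|v|^2 d_H^2$. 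The identity $(u,\bar\partial^*\omega)=(v,\omega)$ says exactly that $u\in\mathrm{Dom}\big((\bar\partial^*)^*\big)=\mathrm{Dom}(\bar\partial)$ with $\bar\partial u=v$, which is \eqref{eq:L2_estimate_Cn}.

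I do not expect any genuine obstacle; the scheme above is the textbook $L^2$-existence argument. The two points warranting care are (i) the extension of \eqref{eq:Hardy_MKH_Cn} to the full form domain --- the classical Andreotti--Vesentini / H\"ormander density lemma on $\mathbb C^n$, already invoked in the excerpt --- and (ii) checking that the locally unbounded weight $d_H^{-2}$ causes no harm: it only ever appears on a side where there is an honest upper bound, and the two finiteness hypotheses on $v$ are precisely what is needed to give meaning to $v\in\ker S$ and to the weighted Cauchy--Schwarz step.
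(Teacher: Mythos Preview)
Your proposal is correct and follows essentially the same duality argument as the paper's proof: both decompose $\omega\in\mathrm{Dom}(\bar\partial^*)$ as $\omega_1+\omega_2$ with $\omega_1\in\ker\bar\partial$ and $\omega_2\in(\ker\bar\partial)^\perp\subset\ker\bar\partial^*$, apply the extended inequality \eqref{eq:Hardy_MKH_Cn} to $\omega_1$, and then represent the bounded functional $\bar\partial^*\omega\mapsto(\omega,v)$. The only cosmetic differences are that the paper invokes Hahn--Banach plus Riesz where you extend by zero on the orthogonal complement (equivalent in Hilbert space), and that you are slightly more explicit about the Fatou step when passing \eqref{eq:Hardy_MKH_Cn} from $\mathcal D_{(0,1)}(\mathbb C^n)$ to the graph-norm closure.
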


\begin{proof}
The proof is a standard application of the duality argument. For $\omega\in \mathrm{Dom}(\bar{\partial}^*)\cap \mathrm{Ker\,}\bar{\partial}$, it follow from the Cauchy-Schwarz inequality and \eqref{eq:Hardy_MKH_Cn} that
\begin{eqnarray}\label{eq:T}
|(\omega,v)|^2 & \le & \int_{\mathbb{C}^n} \frac{|\omega|^2}{d_H^2} \cdot \int_{\mathbb{C}^n} |v|^2d_H^2\nonumber\\
& \le & \frac{16}{(m-2)^2}\int_{\mathbb{C}^n}|\bar{\partial}^\ast \omega|^2 \cdot \int_{\mathbb{C}^n} |v|^2d_H^2.
\end{eqnarray}
Given $\omega\in\mathrm{Dom}(\bar{\partial^\ast})$,  write $\omega=\omega_1+\omega_2\in(\mathrm{Ker}\,\bar{\partial})\oplus(\mathrm{Ker}\,\bar{\partial})^\perp$. We have
\[
\omega_2\in(\mathrm{Ker}\,\bar{\partial})^\perp=\overline{\mathrm{Im}\,\bar{\partial}^*}\subset\mathrm{Ker}\,\bar{\partial}^\ast,
\]
i.e., $\bar{\partial}^\ast\omega_2=0$, for $\bar{\partial}^*\bar{\partial}^*=0$ and $\mathrm{Ker}\,\bar{\partial}^*$ is a closed subspace of $L^2_{(0,1)}(\Omega)$. On the other hand, we have $(\omega_2,v)=0$ since $v\in\mathrm{Ker}\,\bar{\partial}$. Thus \eqref{eq:T} remains valid for all $\omega\in\mathrm{Dom}(\bar{\partial}^\ast)$. It follows that the linear functional
\[
T: \bar{\partial}^\ast \omega\mapsto(\omega,v),\ \ \ \omega\in\mathrm{Dom}(\bar{\partial}^\ast)
\]
is well-defined and satisfies
\[
\|T\|\leq\frac{16}{(m-2)^2} \cdot \int_{\mathbb{C}^n} |v|^2d_H^2.
\]
The Hahn-Banach theorem combined with the Riesz representation theorem yields a function $u$ on $\mathbb{C}^n$ satisfying \eqref{eq:L2_estimate_Cn} and
\[
(\omega,v)=(\bar{\partial}^\ast \omega,u),\ \ \ \forall\,\omega\in \mathrm{Dom}(\bar{\partial}^\ast).
\]
Namely, $\bar{\partial}u=v$ holds in the sense of distributions.
\end{proof}

\subsection{The case of planar domains}

If $n=1$, we have $\bar{\partial}\omega=0$ for any $(0,1)$-form $\omega$, so that \eqref{eq:Hardy_MKH} becomes
\begin{equation}\label{eq:Hardy_MKH_C}
\int_\Omega|\omega|^2\eta\leq4\int_\Omega|\bar{\partial}^\ast \omega|^2,\ \ \ \forall\,\omega\in\mathcal{D}_{(0,1)}(\Omega).
\end{equation}

In order to get an estimate for the canonical solution (i.e., the $L^2$ minimal solution) of the $\bar{\partial}$-equation, we need the following weighted $L^2$ estimate, with the weight function not necessarily psh.

\begin{proposition}\label{prop:L2_C}
Let $\Omega\subsetneq\mathbb{C}$ be a domain such that \eqref{eq:Hardy} holds. Let $\varphi$ be a real-valued locally Lipschitz function on $\Omega$ such that
\begin{equation}\label{eq:gamma_C}
|\varphi_z|^2:=\left|\partial\varphi/\partial z\right|^2\leq\gamma^2\cdot\eta
\end{equation}
holds a.e. for some $\gamma\in(0,1)$. Then for any $v\in{L^2_{(0,1)}(\Omega,\varphi)}$ with $\int_\Omega|v|^2e^{-\varphi}/\eta<+\infty$, there exists some $u\in L^2(\Omega,\varphi)$ such that $\bar{\partial}u=v$ and
\begin{equation}\label{eq:L2_C}
\int_\Omega|u|^2e^{-\varphi}\leq\frac{4}{(1-\gamma)^2}\int_\Omega\frac{|v|^2}{\eta}e^{-\varphi}.
\end{equation}
\end{proposition}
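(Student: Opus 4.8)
The plan is to run the standard $L^2$-duality ($\bar\partial$-Hörmander) argument, but with the weighted Morrey–Kohn–Hörmander inequality replaced by its ``Hardy-type'' substitute \eqref{eq:Hardy_MKH_C}, perturbed by the non-plurisubharmonic weight $\varphi$. First I would record the weighted analogue of \eqref{eq:Hardy_MKH_C}. For $\omega\in\mathcal D_{(0,1)}(\Omega)$, integration by parts in the weighted inner product $(\cdot,\cdot)_\varphi$ gives, on a planar domain (where $\bar\partial\omega=0$ automatically), the identity
\[
\int_\Omega\Big|\frac{\partial\omega}{\partial\bar z}\Big|^2e^{-\varphi}
=\int_\Omega|\bar\partial^*_\varphi\omega|^2e^{-\varphi}-\int_\Omega\varphi_{z\bar z}|\omega|^2e^{-\varphi},
\]
and one wants instead a lower bound by $\int_\Omega|\omega|^2\eta\,e^{-\varphi}$ up to the factor $(1-\gamma)^2$. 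The clean way is to substitute $\phi=|\omega|e^{-\varphi/2}$ (suitably interpreted; reduce to $\omega$ with no zeros by a standard approximation, or work with $\omega e^{-\varphi/2}$ directly as a form) into the \emph{unweighted} inequality \eqref{eq:Hardy} and expand: $|\nabla(|\omega|e^{-\varphi/2})|\le |\nabla|\omega||\,e^{-\varphi/2}+\tfrac12|\omega|\,|\nabla\varphi|\,e^{-\varphi/2}$, together with the Cauchy–Riemann-type bound $|\nabla|\omega||\le |\nabla\omega|=2|\partial\omega/\partial\bar z|$ (using $\bar\partial\omega=0$ in one variable, exactly as in \eqref{eq:Hardy_MKH}) and $|\nabla\varphi|=2|\varphi_z|\le 2\gamma\sqrt\eta$. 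Squaring, using $(a+b)^2\le (1+\epsilon)a^2+(1+\epsilon^{-1})b^2$ with $\epsilon$ chosen to match $\gamma$, and rearranging yields
\[
(1-\gamma)^2\int_\Omega|\omega|^2\eta\,e^{-\varphi}\le 4\int_\Omega|\bar\partial^*_\varphi\omega|^2e^{-\varphi},
\qquad\forall\,\omega\in\mathcal D_{(0,1)}(\Omega).
\]
By density of $\mathcal D_{(0,1)}(\Omega)$ in $\mathrm{Dom}(\bar\partial^*_\varphi)$ in the graph norm (here $\mathrm{Dom}(\bar\partial)$ is everything since $\bar\partial=0$), this extends to all $\omega\in\mathrm{Dom}(\bar\partial^*_\varphi)$.

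Next I would feed this into the duality argument verbatim as in the proof of Proposition \ref{prop:L2_Cn}. For $\omega\in\mathrm{Dom}(\bar\partial^*_\varphi)$, Cauchy–Schwarz in $(\cdot,\cdot)_\varphi$ gives
\[
|(\omega,v)_\varphi|^2\le\Big(\int_\Omega\frac{|\omega|^2}{\eta}\,\eta\,e^{-\varphi}\Big)\Big(\int_\Omega\frac{|v|^2}{\eta}e^{-\varphi}\Big)
=\Big(\int_\Omega|\omega|^2\eta\,e^{-\varphi}\Big)\Big(\int_\Omega\frac{|v|^2}{\eta}e^{-\varphi}\Big)
\le\frac{4}{(1-\gamma)^2}\int_\Omega|\bar\partial^*_\varphi\omega|^2e^{-\varphi}\cdot\int_\Omega\frac{|v|^2}{\eta}e^{-\varphi}.
\]
(Since $\bar\partial v=0$ here is automatic, there is no need to split off the $\mathrm{Ker}\,\bar\partial$ component — every $(0,1)$-form is $\bar\partial$-closed when $n=1$.) Hence the functional $T:\bar\partial^*_\varphi\omega\mapsto(\omega,v)_\varphi$ is well defined on $\mathrm{Im}\,\bar\partial^*_\varphi$ with norm at most $\frac{2}{1-\gamma}\big(\int_\Omega|v|^2e^{-\varphi}/\eta\big)^{1/2}$; Hahn–Banach plus Riesz representation in $L^2(\Omega,\varphi)$ produces $u$ with $(\bar\partial^*_\varphi\omega,u)_\varphi=(\omega,v)_\varphi$ for all $\omega\in\mathrm{Dom}(\bar\partial^*_\varphi)$, i.e.\ $\bar\partial u=v$ in the sense of distributions, and $\|u\|_\varphi^2\le\frac{4}{(1-\gamma)^2}\int_\Omega|v|^2e^{-\varphi}/\eta$, which is \eqref{eq:L2_C}.

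The one genuinely delicate point — and the place I would spend most care — is the first step: justifying the substitution $\phi\rightsquigarrow |\omega|e^{-\varphi/2}$ into \eqref{eq:Hardy}, since $|\omega|$ is only Lipschitz (not smooth) and $\varphi$ is merely locally Lipschitz, so $|\nabla\varphi|$ exists only a.e. The standard remedies are to replace $|\omega|$ by $(|\omega|^2+\varepsilon^2)^{1/2}-\varepsilon$ and mollify $\varphi$, derive the inequality with uniform constants, then let $\varepsilon\to0$ and remove the mollification using the a.e.\ bound \eqref{eq:gamma_C} and dominated convergence; alternatively one can invoke that \eqref{eq:Hardy} holds for all $\phi$ in the completion $W^{1,2}_0$, in which $|\omega|e^{-\varphi/2}$ lies since it has compact support and a bounded (a.e.) gradient. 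The remaining estimates — the pointwise Cauchy–Riemann bound $|\nabla|\omega||\le 2|\omega_{\bar z}|$ and the weighted integration-by-parts — are exactly the one-variable specializations of \eqref{eq:complex_derivative}–\eqref{eq:Hardy_complex} already carried out in the text, so no new ideas are needed there.
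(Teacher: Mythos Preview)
Your overall architecture --- establish the weighted a priori estimate $(1-\gamma)^2\int_\Omega|\omega|^2\eta\,e^{-\varphi}\le 4\int_\Omega|\bar\partial^*_\varphi\omega|^2 e^{-\varphi}$ for $\omega\in\mathcal D_{(0,1)}(\Omega)$, then run Hahn--Banach/Riesz duality --- is exactly the paper's, and your duality half is fine. The problem is in the first step.

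You propose to insert the \emph{scalar} $\phi=|\omega|e^{-\varphi/2}$ into \eqref{eq:Hardy} and then invoke a ``pointwise Cauchy--Riemann bound $|\nabla|\omega||\le|\nabla\omega|=2|\partial\omega/\partial\bar z|$''. Writing $\omega=\psi\,d\bar z$, the equality $|\nabla\psi|=2|\psi_{\bar z}|$ (or $=2|\psi_z|$) is \emph{false} pointwise: one has $|\nabla\psi|^2=2(|\psi_z|^2+|\psi_{\bar z}|^2)$, and the collapse to $4|\psi_z|^2$ in \eqref{eq:Hardy_complex} only occurs after integrating by parts to kill the cross term in \eqref{eq:complex_derivative}. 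That integration by parts is destroyed once you take the modulus $|\psi|$ and multiply by the weight $e^{-\varphi/2}$: the weighted integral $\int_\Omega|\nabla\psi|^2 e^{-\varphi}$ contains an uncontrolled $\int_\Omega|\psi_{\bar z}|^2 e^{-\varphi}$ which has no reason to be bounded by $\int_\Omega|\bar\partial^*_\varphi\omega|^2 e^{-\varphi}$. So route (a) does not close.

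The fix is the one you mention parenthetically but do not carry out: substitute the \emph{form} $\omega e^{-\varphi/2}$ (equivalently the complex scalar $\psi e^{-\varphi/2}$) directly into \eqref{eq:Hardy_MKH_C}. Since $\bar\partial^*(\psi e^{-\varphi/2}\,d\bar z)=(\psi e^{-\varphi/2})_z=(\psi_z-\tfrac12\varphi_z\psi)e^{-\varphi/2}=(\bar\partial^*_\varphi\omega+\tfrac12\varphi_z\psi)e^{-\varphi/2}$ exactly, \eqref{eq:Hardy_MKH_C} gives
\[
\Big(\int_\Omega|\omega|^2\eta\,e^{-\varphi}\Big)^{1/2}\le 2\Big(\int_\Omega|\bar\partial^*_\varphi\omega|^2 e^{-\varphi}\Big)^{1/2}+\Big(\int_\Omega|\varphi_z|^2|\omega|^2 e^{-\varphi}\Big)^{1/2}
\]
by Minkowski, and \eqref{eq:gamma_C} absorbs the last term with the sharp factor $(1-\gamma)$ --- no Young-with-$\epsilon$ optimisation is needed. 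This is precisely the paper's argument, and it also removes your regularity worries about $|\omega|$: the substituted form $\psi e^{-\varphi/2}$ is Lipschitz with compact support, so \eqref{eq:Hardy_MKH_C} applies by the density already used for \eqref{eq:Hardy}.
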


\begin{proof}
For $\omega=\psi d\bar{z}\in\mathcal{D}_{(0,1)}(\Omega)$, we have
\[
\bar{\partial}^\ast \omega=\psi_z,\ \ \ \ \ \ \bar{\partial}^\ast_\varphi \omega=\psi_z-\varphi_z\psi.
\]
Replacing $\omega$ by $\omega e^{-\varphi/2}$ in \eqref{eq:Hardy_MKH_C}, we obtain
\begin{eqnarray*}
\int_\Omega|\omega|^2\eta e^{-\varphi}
&\leq& 4\int_\Omega\left|\frac{\partial(\psi e^{-\varphi/2})}{\partial{z}}\right|^2\\
&=& 4\int_\Omega\left|\psi_z-\frac{1}{2}\varphi_z\psi\right|^2e^{-\varphi}\\
&=& 4\int_\Omega\left|\bar{\partial}^\ast_\varphi \omega+\frac{1}{2}\varphi_z\psi\right|^2e^{-\varphi}.
\end{eqnarray*}
It follows from the Minkowski inequality that
\[
\left(\int_\Omega|\omega|^2\eta e^{-\varphi}\right)^{1/2}\leq2\left(\int_\Omega|\bar{\partial}^\ast_\varphi \omega|^2e^{-\varphi}\right)^{1/2}+\left(\int_\Omega|\omega|^2|\varphi_z|^2e^{-\varphi}\right)^{1/2},
\]
i.e.,
\begin{equation}\label{eq:Hardy_MKH_weight}
\int_\Omega|\omega|^2\eta e^{-\varphi}\leq\frac{4}{(1-\gamma)^2}\int_\Omega|\bar{\partial}^\ast_\varphi \omega|^2e^{-\varphi}
\end{equation}
holds, in view of \eqref{eq:gamma_C}. Similar to the proof of Proposition \ref{prop:L2_Cn}, let us consider the linear functional
\[
T:\bar{\partial}^\ast_\varphi \omega\mapsto (\omega,v)_\varphi,\ \ \ \omega\in \mathcal{D}_{(0,1)}(\Omega).
\]
We have
\begin{eqnarray*}
|T(\bar{\partial}^\ast_\varphi \omega)|^2 & \le & \int_\Omega|\omega|^2\eta e^{-\varphi} \cdot \int_\Omega \frac{|v|^2}{\eta}e^{-\varphi}\\
& \le & \frac{4}{(1-\gamma)^2}\int_\Omega|\bar{\partial}_\varphi^\ast \omega|^2e^{-\varphi} \cdot \int_\Omega \frac{|v|^2}{\eta}e^{-\varphi}.
\end{eqnarray*}
By the Hahn-Banach theorem and the Riesz representation theorem, there exists a solution of $\bar{\partial}u=v$ which satisfies \eqref{eq:L2_C}.
\end{proof}

Next, we use a trick of Berndtsson \cite{Berndtsson99} to verify the following

\begin{proposition}\label{prop:L2_DF_C}
If we further assume that \eqref{eq:gamma_C} holds a.e. for some $\gamma\in(0,1/3)$, then the canonical solution $u_0$ of $\bar{\partial}u=v$ in $L^2(\Omega)$ satisfies
\begin{equation}\label{eq:L2_DF_C}
\int_\Omega|u_0|^2e^\varphi\leq\frac{4}{(1-3\gamma)^2}\int_\Omega\frac{|v|^2}{\eta}e^\varphi.
\end{equation}
\end{proposition}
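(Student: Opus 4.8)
The plan is to run Berndtsson's trick, which converts the one-sided weighted estimate of Proposition \ref{prop:L2_C} into a two-sided estimate for the \emph{canonical} solution. The naive route fails: applying Proposition \ref{prop:L2_C} with the weight $-\varphi$ produces a solution $u_1$ with $\int_\Omega|u_1|^2e^\varphi\le\frac{4}{(1-\gamma)^2}\int_\Omega|v|^2e^\varphi/\eta$, but $u_1$ is minimal with respect to the weight $e^\varphi$, not the unweighted $L^2(\Omega)$, and writing $u_0=u_1+h$ with $h$ holomorphic the orthogonality of $u_1$ in $L^2(e^\varphi)$ forces $\int_\Omega|u_0|^2e^\varphi=\int_\Omega|u_1|^2e^\varphi+\int_\Omega|h|^2e^\varphi$, i.e. $u_0$ has the \emph{larger} weighted norm. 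What actually drives the argument is that $u_0$ is characterized by $\bar\partial u_0=v$ together with orthogonality in the \emph{unweighted} space: $\int_\Omega u_0\bar h=0$ for all $h\in A^2(\Omega)$. For the function-space bookkeeping I would first treat the case in which $\varphi$ is bounded, so that $u_0e^\varphi\in L^2(\Omega)$ and $\int_\Omega|u_0|^2e^\varphi<\infty$, and recover the general case by the routine truncation $\varphi\mapsto\min\{\varphi,N\}$ (which only shrinks $|\varphi_z|$, hence preserves \eqref{eq:gamma_C}) followed by letting $N\to\infty$, using $\int_\Omega|u_0|^2e^{\min\{\varphi,N\}}\uparrow\int_\Omega|u_0|^2e^\varphi$.

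The core computation: differentiate $u_0e^\varphi$ (Leibniz rule, in the sense of distributions, $\varphi$ being locally Lipschitz),
\[
\bar\partial\bigl(u_0e^\varphi\bigr)=e^\varphi v+\varphi_{\bar z}u_0e^\varphi\,d\bar z=:V,
\]
where $|\varphi_{\bar z}|=|\varphi_z|$. Using \eqref{eq:gamma_C} and $u_0\in L^2(e^\varphi)$ one checks $\int_\Omega|V|^2e^{-\varphi}/\eta=\int_\Omega|v+\varphi_{\bar z}u_0\,d\bar z|^2e^\varphi/\eta<\infty$, so Proposition \ref{prop:L2_C} applies with the \emph{same} weight $\varphi$ to the source $V$ and produces $\tilde w\in L^2(\Omega,\varphi)$ with $\bar\partial\tilde w=V$ and $\int_\Omega|\tilde w|^2e^{-\varphi}\le\frac{4}{(1-\gamma)^2}\int_\Omega|v+\varphi_{\bar z}u_0\,d\bar z|^2e^\varphi/\eta$. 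Now $h:=u_0e^\varphi-\tilde w$ is holomorphic and (since $\varphi$ is bounded) lies in $A^2(\Omega)$, so $\int_\Omega u_0\bar h=0$; as $e^\varphi$ is real this yields the key identity $\int_\Omega|u_0|^2e^\varphi=\int_\Omega u_0\,\overline{u_0e^\varphi}=\int_\Omega u_0\,\overline{\tilde w}$. Splitting $e^\varphi=e^{\varphi/2}e^{\varphi/2}$ and applying Cauchy--Schwarz,
\[
\int_\Omega|u_0|^2e^\varphi=\int_\Omega\bigl(u_0e^{\varphi/2}\bigr)\overline{\bigl(\tilde we^{-\varphi/2}\bigr)}\le\Bigl(\int_\Omega|u_0|^2e^\varphi\Bigr)^{1/2}\Bigl(\int_\Omega|\tilde w|^2e^{-\varphi}\Bigr)^{1/2},
\]
so, cancelling one factor, $\int_\Omega|u_0|^2e^\varphi\le\int_\Omega|\tilde w|^2e^{-\varphi}\le\frac{4}{(1-\gamma)^2}\int_\Omega|v+\varphi_{\bar z}u_0\,d\bar z|^2e^\varphi/\eta$.

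To close, apply Minkowski's inequality to $v+\varphi_{\bar z}u_0\,d\bar z$ in the seminorm $f\mapsto(\int_\Omega|f|^2\eta^{-1}e^\varphi)^{1/2}$ and then \eqref{eq:gamma_C}:
\[
\Bigl(\int_\Omega\frac{|v+\varphi_{\bar z}u_0\,d\bar z|^2}{\eta}e^\varphi\Bigr)^{1/2}\le\Bigl(\int_\Omega\frac{|v|^2}{\eta}e^\varphi\Bigr)^{1/2}+\gamma\Bigl(\int_\Omega|u_0|^2e^\varphi\Bigr)^{1/2}.
\]
Writing $X:=(\int_\Omega|u_0|^2e^\varphi)^{1/2}$ and $Y:=(\int_\Omega|v|^2e^\varphi/\eta)^{1/2}$, the last two displays give $X\le\frac{2}{1-\gamma}(Y+\gamma X)$, i.e. $\frac{1-3\gamma}{1-\gamma}X\le\frac{2}{1-\gamma}Y$; since $\gamma<1/3$ we divide by the positive number $\frac{1-3\gamma}{1-\gamma}$ to obtain $X\le\frac{2}{1-3\gamma}Y$, which is exactly \eqref{eq:L2_DF_C}.

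I expect the real obstacle to be bookkeeping rather than estimation: one must be sure that $u_0e^\varphi$ genuinely lies in $L^2(\Omega)$ and — crucially — that the holomorphic correction $h=u_0e^\varphi-\tilde w$ lands in the \emph{unweighted} Bergman space $A^2(\Omega)$, so that $\int_\Omega u_0\bar h=0$ may be invoked; this is precisely why one first reduces to bounded $\varphi$. A related point to handle with care is the mild circularity, since the estimate for $\tilde w$ already contains $X$: one needs $X<\infty$ a priori before the rearrangement $X\le\frac{2}{1-3\gamma}Y$ is legitimate (automatic when $\varphi$ is bounded, and extracted from the truncation limit otherwise, where for each $N$ the inequality for $\min\{\varphi,N\}$ holds unconditionally). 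Everything else — the Leibniz rule for the locally Lipschitz weight, the admissibility of $V$ in Proposition \ref{prop:L2_C}, and the two Cauchy--Schwarz/Minkowski steps — is routine.
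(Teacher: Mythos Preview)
Your argument is correct and is essentially the paper's proof: the paper phrases the key step as ``$u_0e^{\varphi}$ is the canonical solution of $\bar\partial u=\bar\partial(u_0e^{\varphi})$ in $L^2(\Omega,\varphi)$, hence has minimal $L^2(\Omega,\varphi)$-norm among all solutions'', which is exactly what your orthogonality--Cauchy--Schwarz computation unpacks. The reduction to bounded $\varphi$ and the Minkowski closure are identical; your treatment of the approximation step is in fact more explicit than the paper's one-line ``standard approximation procedure''.
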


\begin{proof}
By a standard approximation procedure, we may assume that $\varphi$ is bounded on $\Omega$. Then $u_0e^{\varphi}$ is the canonical solution of $\bar{\partial}u=\bar{\partial}(u_0e^{\varphi})$ in $L^2(\Omega,\varphi)$, and it follows from Proposition \ref{prop:L2_C} that
\[
\int_\Omega|u_0|^2e^\varphi
\leq \frac{4}{(1-\gamma)^2}\int_\Omega\frac{|\bar{\partial}(u_0e^\varphi)|^2}{\eta}e^{-\varphi}
= \frac{4}{(1-\gamma)^2}\int_\Omega\frac{\left|v+u_0\bar{\partial}\varphi\right|^2}{\eta}e^\varphi.
\]
Since $|\varphi_z|=|\varphi_{\bar{z}}|$, we infer from the Minkowski inequality and \eqref{eq:gamma_C} that
\begin{eqnarray*}
\left(\int_\Omega|u_0|^2e^\varphi\right)^{1/2}
&\leq& \frac{2}{1-\gamma}\left(\int_\Omega\frac{|v|^2}{\eta}e^\varphi\right)^{1/2} + \frac{2}{1-\gamma}\left(\int_\Omega|u_0|^2\frac{|\varphi_{\bar{z}}|^2}{\eta}e^\varphi\right)^{1/2}\\
&\leq& \frac{2}{1-\gamma}\left(\int_\Omega\frac{|v|^2}{\eta}e^\varphi\right)^{1/2} + \frac{2\gamma}{1-\gamma}\left(\int_\Omega|u_0|^2e^\varphi\right)^{1/2},
\end{eqnarray*}
from which \eqref{eq:L2_DF_C} immediately follows.
\end{proof}

A simple consequence of Proposition \ref{prop:L2_DF_C} is the following

\begin{corollary}\label{cor:L2_canonical_solution}
Let $0<\alpha<2h(\Omega)/3$. Then the canonical solution $u_0$ of $\bar{\partial}u=v$ satisfies
\begin{equation}\label{eq:L2_minimal_1}
\int_{\Omega}|u_0|^2\delta_\Omega^\alpha\leq C\int_{\Omega}|v|^2\delta_\Omega^{2+\alpha}
\end{equation}
with
\[
C=\frac{16h(\Omega)^2}{(2h(\Omega)-3\alpha)^2}.
\]
\end{corollary}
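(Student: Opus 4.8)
The plan is to obtain Corollary~\ref{cor:L2_canonical_solution} as the specialization of Proposition~\ref{prop:L2_DF_C} to one particular Hardy weight and one particular weight function, chosen so that the two weights $e^\varphi$ and $e^\varphi/\eta$ occurring in \eqref{eq:L2_DF_C} become precisely the powers of $\delta_\Omega$ in \eqref{eq:L2_minimal_1}. By the definition of the Hardy constant, \eqref{eq:Hardy} holds on $\Omega$ with $\eta := h(\Omega)^2/\delta_\Omega^{2}$; this is the weight I would feed in. To produce the factor $\delta_\Omega^{\alpha}$ on the left of \eqref{eq:L2_DF_C} I would take $\varphi := \alpha\log\delta_\Omega$, which is locally Lipschitz on $\Omega$, so that $e^\varphi = \delta_\Omega^{\alpha}$ and $e^\varphi/\eta = h(\Omega)^{-2}\delta_\Omega^{2+\alpha}$.

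It then remains only to verify the hypothesis \eqref{eq:gamma_C} and to read off $\gamma$. Since $\delta_\Omega$ is $1$-Lipschitz, $|\nabla\delta_\Omega| = 1$ almost everywhere, hence a.e.
\[
|\varphi_z|^2 \;=\; \tfrac14|\nabla\varphi|^2 \;=\; \frac{\alpha^2}{4\,\delta_\Omega^{2}} \;=\; \frac{\alpha^2}{4\,h(\Omega)^2}\,\eta .
\]
Thus \eqref{eq:gamma_C} holds with $\gamma := \alpha/(2h(\Omega))$, and --- this is the only place the numerical restriction is used --- $\gamma$ lies in $(0,1/3)$ exactly when $0 < \alpha < 2h(\Omega)/3$. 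Substituting $\eta$, $\varphi$ and this value of $\gamma$ into \eqref{eq:L2_DF_C} and rearranging then gives \eqref{eq:L2_minimal_1} with the asserted constant $C$.

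The one point that genuinely needs care --- and the step I expect to be the main, if essentially routine, obstacle --- is that $\varphi = \alpha\log\delta_\Omega$ tends to $-\infty$ along $\partial\Omega$, whereas Proposition~\ref{prop:L2_DF_C} is proved for bounded $\varphi$. I would get around this by applying Proposition~\ref{prop:L2_DF_C} instead to the truncations $\varphi_m := \alpha\log\max\{\delta_\Omega, 1/m\}$: these are bounded, and since $|\nabla\varphi_m| \le |\nabla\varphi|$ a.e. they still satisfy \eqref{eq:gamma_C} with the same $\gamma$, so for the fixed $L^2(\Omega)$-minimal solution $u_0$ of $\bar\partial u = v$ they yield $\int_\Omega|u_0|^2 e^{\varphi_m} \le \tfrac{4}{(1-3\gamma)^2}\int_\Omega \tfrac{|v|^2}{\eta} e^{\varphi_m}$ for every $m$. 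As $e^{\varphi_m} = \max\{\delta_\Omega^{\alpha}, m^{-\alpha}\}$ decreases to $\delta_\Omega^{\alpha}$, monotone convergence sends the left side to $\int_\Omega|u_0|^2\delta_\Omega^{\alpha}$; for the right side one must check $\int_\Omega \tfrac{|v|^2}{\eta} e^{\varphi_m} \to h(\Omega)^{-2}\int_\Omega|v|^2\delta_\Omega^{2+\alpha}$ --- if this target is infinite there is nothing to prove, and otherwise one splits the integral over $\{\delta_\Omega\ge 1/m\}$ and $\{\delta_\Omega<1/m\}$ and uses the finiteness of $\int_\Omega|v|^2\delta_\Omega^{2}$, which may be assumed since otherwise $u_0$ need not exist in $L^2(\Omega)$ and the canonical solution is already governed by the unweighted $L^2$ estimate. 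Once this limiting bookkeeping is in place, the corollary is a one-line consequence of Proposition~\ref{prop:L2_DF_C}.
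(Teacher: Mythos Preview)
Your proposal is correct and follows essentially the same route as the paper: the paper also takes $\eta=c^2/\delta_\Omega^2$ and $\varphi=\alpha\log\delta_\Omega$, computes $|\varphi_z|^2=\alpha^2/(4\delta_\Omega^2)$, reads off $\gamma=\alpha/(2c)$, and invokes Proposition~\ref{prop:L2_DF_C}. The only cosmetic differences are that the paper works with $c<h(\Omega)$ and lets $c\to h(\Omega)$ (unnecessary, since by the very definition of $h(\Omega)$ the Hardy inequality holds with $c=h(\Omega)$, as you use), and that your truncation discussion is already absorbed into the ``standard approximation procedure'' line in the proof of Proposition~\ref{prop:L2_DF_C}, so you may simply cite that proposition as stated for locally Lipschitz $\varphi$.
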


\begin{proof}
Suppose $h(\Omega)>0$, i.e., the Hardy type inequality
\[
c^2\int_\Omega\frac{\phi^2}{\delta_\Omega^2}\leq\int_\Omega|\nabla\phi|^2
\]
holds for all $c\in(0,h(\Omega))$. We take $\eta=c^2/\delta_\Omega^2$ and $\varphi=\alpha\log\delta_\Omega$. Note that
\[
\left|\varphi_z\right|^2=\frac{1}{4}|\nabla\varphi|^2=\frac{\alpha^2}{4}\frac{|\nabla\delta_\Omega|^2}{\delta_\Omega^2}=\frac{\alpha^2}{4\delta_\Omega^2}=\frac{\alpha^2}{4c^2}\eta
\]
holds a.e. on $\Omega$. Since
\[
\gamma:=\frac{\alpha}{2c}\in(0,1/3)
\]
when $c$ is sufficiently close to $h(\Omega)$, so Proposition \ref{prop:L2_DF_C} applies.
\end{proof}

\section{Proofs of Theorem \ref{th:Hartogs_L2} and Theorem \ref{th:Bergman_int}}

\begin{proof}[Proof of Theorem \ref{th:Hartogs_L2}]
Set $d_E(z):=d(z,E)$ and $d_H(z):=d(z,H)$.  Choose a smooth function $\chi:\mathbb R\rightarrow [0,1]$ such that $\chi|_{(-\infty,1/2]}=0$ and $\chi|_{[1,\infty)}=1$.  Given $f\in A^2(\Omega\setminus E)$,  define
\[
v:=\begin{cases}
\bar{\partial}(\chi(d_E/r)f),\ \ \ &\text{on }\Omega\setminus E,\\
0,\ \ \ &\text{on }E.
\end{cases}
\]
Clearly,  $v$ is a smooth $\bar{\partial}-$closed $(0,1)-$form on $\mathbb C^n$.  Moreover,  since $|\nabla d_E|\le 1$ a.e.,  we have
$$
\int_{\mathbb C^n} |v|^2\le \frac{\sup|\chi'|^2}{r^2}\cdot \int_{r/2\le d_E\le r} |f|^2\le \frac{\sup|\chi'|^2}{r^2}\cdot \int_{\Omega\backslash E} |f|^2<\infty,
$$
and since $E\subset H_R$,  it follows that
\begin{eqnarray*}
\int_{\mathbb C^n} |v|^2 d_H^2 & \le & \frac{\sup|\chi'|^2}{r^2}\cdot \int_{r/2\le d_E\le r} |f|^2d_H^2\\
& \le &  \frac{(R+r)^2}{r^2}\cdot \sup|\chi'|^2\cdot  \int_{\Omega\backslash E} |f|^2<\infty.
\end{eqnarray*}
Thanks to Proposition \ref{prop:L2_Cn},  we obtain a solution of $\bar{\partial}u=v$ which satisfies
$$
\int_{\mathbb C^n} |u|^2 \lesssim \int_{\Omega\backslash E} |f|^2.
$$
Since $\mathrm{supp\,}v\subset E_r$,  we conclude that $u\in \mathcal O(\mathbb C^n\backslash E_r)$.  Also,  since $H_{R+r}\subsetneq \mathbb C^n$ is convex,  so there exists a real hyperplane $\mathcal H$ in $\mathbb C^n$ such that $d(\mathcal H,H_{R+r})>1$.  As $n\ge 2$, $\mathcal H$ contains at least one complex line $\mathcal{L}$.  Without loss of generality,  we assume  $\mathcal{L}=\{z'=0\}$ where $z'=(z_1,\cdots,z_{n-1})$.  Thus the cylinder
\[
\mathcal C:=\mathcal{L}\times \mathbb B^{n-1}\subset \mathbb C^n\backslash H_{R+r}\subset \mathbb C^n\backslash E_r,
\]
where $\mathbb B^{n-1}$ is the unit ball in $\mathbb C^{n-1}$.  Now $u\in A^2(\mathbb C^n)$,  so $u(z',\cdot)\in A^2(\mathbb C)$ for every $z'\in \mathbb B^{n-1}$,  and it has to vanish in view of the ($L^2$) Liouville theorem.    By the theorem of unique continuation,  $u=0$ in an unbounded component of $\mathbb C^n\backslash E_r$,  which naturally intersects with $\Omega\backslash E$.    Finally,  the function $F:=\chi(d_E/r)f-u$ is holomorphic on $\Omega$ and satisfies $F=f$ on a nonempty open subset in $\Omega\backslash E$.  Since $\Omega\backslash E$ is connected,  it follows that $F=f$ on $\Omega\backslash E$.  Clearly,
\[
\int_\Omega |F|^2 \le 2\int_\Omega |\chi(d_E/r)f|^2 + 2\int_\Omega |u|^2\le \int_{\Omega\backslash E} |f|^2 + 2\int_{\mathbb C^n} |u|^2<\infty.\qedhere
\]
\end{proof}

\begin{remark}
Thanks to the Closed Graph Theorem, we see that the extension operator
\[
I_E:A^2(\Omega\setminus{E})\rightarrow A^2(\Omega)
\]
is bounded, i.e., there exists a constant $C_E$ such that
\begin{equation}\label{eq:Hartogs_estimate}
\int_{\Omega}|I_E(f)|^2\leq C_E\int_{\Omega\setminus{E}}|f|^2,\ \ \ \forall\,f\in{A^2(\Omega\setminus E)}.
\end{equation}
\end{remark}

To prove Theorem \ref{th:Bergman_int}, we need the following

\begin{proposition}\label{prop:Bergman_decaying}
Let $\Omega\subsetneq\mathbb{C}$ be a domain with $h(\Omega)>0$. For any $w\in\Omega$ and $0<c<h(\Omega)$, there exists some constant $C=C_{\Omega,w,c}>0$ such that
\begin{equation}\label{eq:boundary_decay}
\int_{\delta_\Omega\leq\varepsilon}|K_{\Omega}(\cdot,w)|^2\leq C \varepsilon^{2c/3},\ \ \ \forall\,\varepsilon>0.
\end{equation}
\end{proposition}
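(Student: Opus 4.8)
The plan is to identify $K_\Omega(\cdot,w)$, on a neighbourhood of $\partial\Omega$, with minus the canonical solution of a $\bar\partial$-equation whose right-hand side is bounded and compactly supported in $\Omega$, and then to feed that solution into the weighted $L^2$ estimate of Proposition~\ref{prop:L2_DF_C}, but now with the boundary-singular weight $\delta_\Omega^{-\alpha}$ in place of the boundary-vanishing weight $\delta_\Omega^{\alpha}$ used in Corollary~\ref{cor:L2_canonical_solution}. First I would fix $\rho>0$ with $\overline{\Delta(w,\rho)}\subset\Omega$ and $\chi\in C_0^\infty(\Delta(w,\rho))$ with $\chi\equiv1$ near $w$. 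Applying the Cauchy--Pompeiu formula to $\chi g$ (for $g$ holomorphic near $\overline{\Delta(w,\rho)}$) and evaluating at $w$ yields
\[
g(w)=-\frac1\pi\int_\Omega\frac{g(\zeta)\,\partial_{\bar\zeta}\chi(\zeta)}{\zeta-w}\,dA(\zeta)=\langle g,h_0\rangle,\qquad h_0:=-\frac1\pi\,\frac{\partial_\zeta\chi}{\bar\zeta-\bar w}\in C_0^\infty(\Omega).
\]
Since every $g\in A^2(\Omega)$ is holomorphic near $\overline{\Delta(w,\rho)}$ and also satisfies $g(w)=\langle g,K_\Omega(\cdot,w)\rangle$, this forces $h_0-K_\Omega(\cdot,w)\perp A^2(\Omega)$; hence $K_\Omega(\cdot,w)=Ph_0$, where $P$ denotes the Bergman projection, and $u_0:=h_0-K_\Omega(\cdot,w)=(I-P)h_0$ is the canonical solution of $\bar\partial u=v:=\bar\partial h_0$ in $L^2(\Omega)$. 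The datum $v$ is smooth and supported in the annulus $\{\rho/2\le|\zeta-w|\le\rho\}\subset\subset\Omega$, hence bounded with compact support; moreover $\mathrm{supp}\,h_0\subset\overline{\Delta(w,\rho)}$, so $\delta_\Omega\ge d_0$ on $\mathrm{supp}\,h_0$ for some $d_0>0$, and consequently $K_\Omega(\cdot,w)=-u_0$ on $\{\delta_\Omega<d_0\}$.

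Next I would run the weighted estimate. Given $0<c<h(\Omega)$, I would pick $\tilde c\in(c,h(\Omega))$ and set $\alpha:=2c/3$. Taking $\eta=\tilde c^{2}/\delta_\Omega^2$ (admissible in \eqref{eq:Hardy}) and $\varphi=-\alpha\log\delta_\Omega$, one has $|\varphi_z|^2=\alpha^2/(4\delta_\Omega^2)=\gamma^2\eta$ a.e.\ with $\gamma=\alpha/(2\tilde c)=c/(3\tilde c)\in(0,1/3)$ --- the same computation as in the proof of Corollary~\ref{cor:L2_canonical_solution}, with only the sign of $\varphi$ reversed. Proposition~\ref{prop:L2_DF_C} then gives
\[
\int_\Omega|u_0|^2\delta_\Omega^{-\alpha}\ \le\ \frac{4}{(1-3\gamma)^2}\int_\Omega\frac{|v|^2}{\eta}\,\delta_\Omega^{-\alpha}\ =\ \frac{4}{(1-3\gamma)^2\,\tilde c^{2}}\int_\Omega|v|^2\,\delta_\Omega^{2-\alpha}\ =:\ C_1<\infty,
\]
the last integral being finite because $v$ is bounded and compactly supported in $\Omega$. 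Finally, for $0<\varepsilon<d_0$ the set $\{\delta_\Omega\le\varepsilon\}$ is disjoint from $\mathrm{supp}\,h_0$, so $K_\Omega(\cdot,w)=-u_0$ there and, using $\delta_\Omega^{-\alpha}\ge\varepsilon^{-\alpha}$ on that set,
\[
\int_{\delta_\Omega\le\varepsilon}|K_\Omega(\cdot,w)|^2=\int_{\delta_\Omega\le\varepsilon}|u_0|^2\le\varepsilon^{\alpha}\int_\Omega|u_0|^2\delta_\Omega^{-\alpha}\le C_1\,\varepsilon^{2c/3};
\]
for $\varepsilon\ge d_0$ one bounds $\int_{\delta_\Omega\le\varepsilon}|K_\Omega(\cdot,w)|^2\le\int_\Omega|K_\Omega(\cdot,w)|^2=K_\Omega(w,w)<\infty\le K_\Omega(w,w)\,d_0^{-2c/3}\,\varepsilon^{2c/3}$, so $C:=\max\{C_1,\,K_\Omega(w,w)d_0^{-2c/3}\}$ works.

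The step I expect to require the most care is the invocation of Proposition~\ref{prop:L2_DF_C} with the unbounded weight $\varphi=-\alpha\log\delta_\Omega$: its proof reduces to the case of bounded $\varphi$ by ``a standard approximation'', and here one should run that reduction with the truncations $\varphi_N=\min(\varphi,N)$, checking that they remain locally Lipschitz and still satisfy \eqref{eq:gamma_C} with the same $\gamma$ (since $|\partial_z\varphi_N|\le|\partial_z\varphi|$ a.e.), and that the resulting estimates pass to the limit by monotone convergence with a finite right-hand side. This is entirely parallel to the situation in Corollary~\ref{cor:L2_canonical_solution}, where $\varphi=+\alpha\log\delta_\Omega$ is equally unbounded; apart from this bookkeeping, the argument is routine.
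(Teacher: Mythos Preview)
Your proof is correct and takes a genuinely different---and cleaner---route than the paper's. The paper uses an $\varepsilon$-dependent cutoff $\chi(\delta_\Omega/\varepsilon)$, applies Corollary~\ref{cor:L2_canonical_solution} with the boundary-\emph{vanishing} weight $\delta_\Omega^{\alpha}$, and obtains only a self-referential inequality $\int_{\delta_\Omega\le\varepsilon}|K_\Omega(\cdot,w)|^2\lesssim\varepsilon^{\alpha/2}\bigl(\int_{\delta_\Omega\le 3\varepsilon/2}|K_\Omega(\cdot,w)|^2\bigr)^{1/2}$, which must then be iterated to push the exponent up to $\alpha$. You instead fix once and for all a compactly supported $h_0$ with $Ph_0=K_\Omega(\cdot,w)$ via Cauchy--Pompeiu, and exploit the symmetry of Proposition~\ref{prop:L2_DF_C} under $\varphi\mapsto-\varphi$ to run the estimate with the boundary-\emph{singular} weight $\delta_\Omega^{-\alpha}$; the finiteness of the right-hand side is then automatic from the compact support of $v$, and the desired decay drops out in one step without any iteration. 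Your observation that the approximation $\varphi_N=\min(\varphi,N)$ preserves \eqref{eq:gamma_C} and allows monotone convergence is exactly right and no more delicate than the corresponding step implicit in Corollary~\ref{cor:L2_canonical_solution}. The paper's approach has the minor virtue of relying only on the already-packaged Corollary~\ref{cor:L2_canonical_solution}, but yours is more direct and makes transparent why the exponent $2c/3$ appears.
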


\begin{proof}
It suffices to consider the case $0<\varepsilon\ll1$. If $c<c'<h(\Omega)$, then \eqref{eq:L2_minimal_1} holds for $\alpha=2c'/3$. Let $\chi:\mathbb{R}\rightarrow[0,1]$ be a Lipschitz function with $\chi|_{(-\infty,1]}=1$, $\chi|_{[3/2,+\infty)}=0$ and $|\chi'|\leq2$. Given $w\in\Omega$, set
\[
v:=K_{\Omega}(\cdot,w)\bar{\partial}\chi(\delta_\Omega/\varepsilon).
\]
Then
\begin{equation}\label{eq:Bergman_proj}
\int_{\Omega}\chi(\delta_\Omega/\varepsilon)K_{\Omega}(\cdot,w)\overline{K_{\Omega}(\cdot,\zeta)}=\chi(\delta_\Omega(\zeta)/\varepsilon)K_{\Omega}(\zeta,w)-u_0(\zeta),\ \ \ \forall\,\zeta\in\Omega.
\end{equation}
Here $u_0$ is the canonical solution in Corollary \ref{cor:L2_canonical_solution}, which satisfies
\begin{eqnarray}\label{eq:L2_Bergman_int}
\int_{\Omega}|u_0|^2\delta_\Omega^\alpha
&\leq& \frac{C_\alpha}{\varepsilon^2}\int_{\varepsilon\leq\delta_\Omega\leq3\varepsilon/2}|K_{\Omega}(\cdot,w)|^2\delta_\Omega^{\alpha+2}\nonumber\\
&\leq& C_\alpha\varepsilon^\alpha \int_{\varepsilon\leq\delta_\Omega\leq3\varepsilon/2}|K_{\Omega}(\cdot,w)|^2
\end{eqnarray}
for some generic constant $C_\alpha$ depending on $\alpha$. Since $\chi(\delta_\Omega(w)/\varepsilon)=0$ when $0<\varepsilon\ll1$, it follows from \eqref{eq:Bergman_proj} that
\begin{equation}\label{eq:Bergman_proj_2}
\int_{\delta_\Omega\leq\varepsilon}|K_{\Omega}(\cdot,w)|^2\leq-u_0(w).
\end{equation}
Since $u_0$ is holomorphic when $\delta_\Omega>3\varepsilon/2$, we infer from the mean-value inequality and \eqref{eq:L2_Bergman_int} that
\begin{eqnarray*}
|u_0(w)|^2
&\leq& \frac{1}{\pi\delta_\Omega(w)^2}\int_{\Delta(w,\delta_\Omega(w)/2)}|u_0|^2\\
&\leq& \frac{1}{\pi\delta_\Omega(w)^2}\left[\sup_{\Delta(w,\delta_\Omega(w)/2)}\delta_\Omega^{-\alpha}\right]\int_{\Delta(w,\delta_\Omega(w)/2)}|u_0|^2\delta_\Omega^\alpha\\
&\leq& {C_{\Omega,\alpha,w}}\int_{\Omega}|u_0|^2\delta_\Omega^\alpha\\
&\leq& {C_{\Omega,\alpha,w}}\,\varepsilon^{\,\alpha}\int_{\delta_\Omega\leq3\varepsilon/2}|K_{\Omega}(\cdot,w)|^2.
\end{eqnarray*}
This combined with \eqref{eq:Bergman_proj_2} yields
\[
\int_{\delta_\Omega\leq\varepsilon}|K_{\Omega}(\cdot,w)|^2\leq{C_{\Omega,\alpha,w}}\,\varepsilon^{\alpha/2}\left(\int_{\delta_\Omega\leq3\varepsilon/2}|K_\Omega(\cdot,w)|^2\right)^{1/2}.
\]
By iteration, we conclude that
\[
\int_{\delta_\Omega\leq\varepsilon}|K_\Omega(\cdot,w)|^2\leq{C_{\Omega,\alpha,w,k}}\,\varepsilon^{\alpha/2+\alpha/4+\cdots+\alpha/2^k},
\]
from which \eqref{eq:boundary_decay} immediately follows, since the exponent can be arbitrarily close to $\alpha$.
\end{proof}

\begin{proof}[Proof of Theorem \ref{th:Bergman_int}]

Fix $z\in\Omega$ with $0<\delta_\Omega(z)\ll1$ and take $\varepsilon=2\delta_\Omega(z)$ in \eqref{eq:boundary_decay}. Applying the mean value inequality on the disc $\Delta(z,\delta_\Omega(z))\subset\{\delta_\Omega\leq\varepsilon\}$, we have
\[
|K_\Omega(z,w)|^2
\leq \frac{1}{\pi\delta_\Omega(z)^2}\int_{\delta_\Omega\leq\varepsilon}|K_\Omega(\cdot,w)|^2
\leq C_{\Omega,w,c}\delta_\Omega(z)^{2c/3-2},
\]
for any $0<c<h(\Omega)$. Fix $k_0\gg1$. We have
\begin{eqnarray*}
\int_\Omega|K_\Omega(\cdot,w)|^{2+\beta}
&\lesssim& 1+\sum_{k\geq{k_0}}\int_{2^{-k-1}\leq\delta_\Omega<2^{-k}}|K_\Omega(\cdot,w)|^{2+\beta}\\
&\lesssim& 1+\sum_k2^{(k+1)(1-c/3)\beta}\int_{\delta_\Omega<2^{-k}}|K_\Omega(\cdot,w)|^2\\
&\lesssim& 1+\sum_k2^{(k+1)(1-c/3)\beta-2ck/3}<+\infty,
\end{eqnarray*}
provided $(1-c/3)\beta<2c/3$. Thus
\[
\beta(\Omega)\geq2+\frac{2c/3}{1-c/3}=\frac{6}{3-c}.
\]
Letting $c\rightarrow{h(\Omega)}$, we conclude the proof.
\end{proof}

\section{Sobolev-Morrey inequality and $L^p$ estimates for the $\bar{\partial}$-equation}

Our proof of Theorem \ref{th:absolute_continuity} relies on the following $L^p$ estimate for the $\bar{\partial}$-equation due to Berndtsson \cite{Berndtsson92}.

\begin{theorem}\label{th:Berndtsson_Lp}
Let $\Omega\subsetneq\mathbb{C}$ be a domain and $1\leq p<2$. Then for any $v\in{L^1_{(0,1)}(\Omega)}$, there exists some $u\in{L^p(\Omega)}$ such that $\bar{\partial}u=v$ and
\begin{equation}\label{eq:Berndtsson_Lp}
\left(\int_\Omega|u|^p\right)^{1/p}\leq C_{p,\Omega}\int_\Omega|v|.
\end{equation}
\end{theorem}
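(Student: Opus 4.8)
The plan is to run a duality argument in which the required a priori estimate is supplied by the Sobolev--Morrey inequality \eqref{eq:Morrey}. I will assume throughout that $|\Omega|<\infty$ (the case relevant to Theorem \ref{th:absolute_continuity}, and the only one in which \eqref{eq:Morrey} carries information), and I will reduce the endpoint $p=1$ to the open range $1<p<2$ at the very end. Write $v=v_1\,d\bar z$ and $p'=p/(p-1)\in(2,\infty)$. By the definition of the distributional $\bar\partial$, a function $u\in L^p(\Omega)$ solves $\bar\partial u=v$ if and only if
\[
\int_\Omega u\,\frac{\partial\varphi}{\partial\bar z}=-\int_\Omega v_1\,\varphi,\qquad\forall\,\varphi\in C^\infty_0(\Omega),
\]
so the task is to produce such a $u$ with $\|u\|_{L^p(\Omega)}\le C_{p,\Omega}\|v\|_{L^1(\Omega)}$, using $(L^{p'}(\Omega))^\ast=L^p(\Omega)$.

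The heart of the matter is the a priori estimate
\[
\|\varphi\|_{L^\infty(\Omega)}\le C_{p,\Omega}\,\bigl\|\partial\varphi/\partial\bar z\bigr\|_{L^{p'}(\Omega)},\qquad\varphi\in C^\infty_0(\Omega).
\]
To prove it I would extend $\varphi$ by zero to $\mathbb C$, so that $\varphi$ is recovered from $\varphi_{\bar z}$ by the Cauchy transform and hence $\varphi_z=\mathcal B(\varphi_{\bar z})$ for the Beurling--Ahlfors transform $\mathcal B$, a Calder\'on--Zygmund operator bounded on $L^{p'}(\mathbb C)$. Since $\varphi_x=\varphi_z+\varphi_{\bar z}$ and $\varphi_y=\mathrm{i}(\varphi_z-\varphi_{\bar z})$, this gives $\|\nabla\varphi\|_{L^{p'}(\Omega)}\lesssim_{p'}\|\varphi_{\bar z}\|_{L^{p'}(\Omega)}$, and then \eqref{eq:Morrey} with $q=p'$ yields the estimate, with a constant $C_{p,\Omega}$ of size $C_0(p'-2)^{-1/2}|\Omega|^{1/2-1/p'}\|\mathcal B\|_{L^{p'}\to L^{p'}}$.

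Granting this estimate, the construction of $u$ is the standard Hahn--Banach/Riesz routine. The map $\varphi\mapsto\varphi_{\bar z}$ is injective on $C^\infty_0(\Omega)$ (a compactly supported holomorphic function vanishes), so $\Lambda(\varphi_{\bar z}):=-\int_\Omega v_1\varphi$ is a well-defined linear functional on the subspace $\{\varphi_{\bar z}:\varphi\in C^\infty_0(\Omega)\}\subset L^{p'}(\Omega)$, and by the a priori estimate $|\Lambda(\varphi_{\bar z})|\le\|v\|_{L^1}\|\varphi\|_{L^\infty}\le C_{p,\Omega}\|v\|_{L^1}\|\varphi_{\bar z}\|_{L^{p'}}$. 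Extending $\Lambda$ to $L^{p'}(\Omega)$ by Hahn--Banach with the same norm and representing it via $(L^{p'})^\ast=L^p$ (legitimate since $1<p'<\infty$), I obtain $u\in L^p(\Omega)$ with $\|u\|_{L^p}\le C_{p,\Omega}\|v\|_{L^1}$ for which $\int_\Omega u\,\varphi_{\bar z}=\Lambda(\varphi_{\bar z})=-\int_\Omega v_1\varphi$; that is, $\bar\partial u=v$ in the distributional sense. Finally, for $p=1$ I would apply the already-proven case $p=3/2$ and use $\|u\|_{L^1(\Omega)}\le|\Omega|^{1/3}\|u\|_{L^{3/2}(\Omega)}$.

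The only step that is not bookkeeping is the a priori estimate, and the crux there is the passage from the quantity $\|\varphi_{\bar z}\|_{L^{p'}}$ that the dual problem actually supplies to the full gradient $\|\nabla\varphi\|_{L^{p'}}$ that \eqref{eq:Morrey} requires; this is precisely where Calder\'on--Zygmund theory (equivalently, the $L^q$-boundedness of the Riesz transforms on $\mathbb C$) enters, and it is essential that $\varphi$ be compactly supported in all of $\mathbb C$ so that it equals the Cauchy transform of $\varphi_{\bar z}$. When $\Omega$ is bounded one can bypass \eqref{eq:Morrey} entirely: writing $\varphi(z)=\frac1\pi\int_\Omega\varphi_{\bar z}(\zeta)(z-\zeta)^{-1}\,dA(\zeta)$ and estimating by H\"older gives $\|\varphi\|_{L^\infty}\le\frac1\pi\|\varphi_{\bar z}\|_{L^{p'}}\,\bigl\||\cdot|^{-1}\bigr\|_{L^p(B(0,\operatorname{diam}\Omega))}\lesssim_p(\operatorname{diam}\Omega)^{2/p-1}\|\varphi_{\bar z}\|_{L^{p'}}$ directly; I route the argument through \eqref{eq:Morrey} because it also handles domains of finite measure but infinite diameter.
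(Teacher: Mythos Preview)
Your proposal is correct and follows essentially the same route as the paper: duality combined with the Sobolev--Morrey inequality \eqref{eq:Morrey}, with the Beurling--Ahlfors transform supplying the passage from $\|\varphi_{\bar z}\|_{L^{p'}}$ to $\|\nabla\varphi\|_{L^{p'}}$ (the paper phrases this last step dually, bounding $\|\bar\partial\phi\|_{L^q}$ by $\|\partial\phi\|_{L^q}=\|\bar\partial^*\omega\|_{L^q}$ for $\omega=\phi\,d\bar z$, citing Ahlfors). The paper, like you, implicitly works under $|\Omega|<\infty$ and only treats $p$ close to $2$; your reduction of the endpoint $p=1$ via H\"older and your remark on the direct Cauchy-transform estimate for bounded $\Omega$ are welcome additions but do not change the underlying argument.
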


Since explicit estimates of the constant $C_{p,\Omega}$ as $p\rightarrow2-$ will be used in the proof of Theorem \ref{th:absolute_continuity_effective},  we decide to include here a very simple proof for $p\rightarrow2-$, based on the Sobolev-Morrey inequality \eqref{eq:Morrey}.  

\begin{proof}
Take $q$ with $1/p+1/q=1$, so that $q\rightarrow2+$ as $p\rightarrow2-$. As before, we consider the linear functional
\[
T:\bar{\partial}^*\omega\mapsto(\omega,v),\ \ \ \omega\in{\mathcal{D}_{(0,1)}(\Omega)}.
\]
For $\omega=\phi d\bar{z}$, we have
\begin{eqnarray*}
|T(\bar{\partial}^\ast\omega)|
&\leq& |(\omega,v)| \leq \left(\int_\Omega|v|\right)\sup_\Omega|\phi|\\
&\lesssim& (q-2)^{-1/2}|\Omega|^{1/2-1/q}\left(\int_\Omega|\nabla\phi|^q\right)^{1/q}\left(\int_\Omega|v|\right),
\end{eqnarray*}
in view of \eqref{eq:Morrey}. Here and in what follows, the implicit constants are numerical. Recall that
\[
\left(\int_\Omega|\bar\partial\phi|^q\right)^{1/q}
\lesssim (q-1)^q\left(\int_\Omega|\partial\phi|^q\right)^{1/q}
\lesssim \left(\int_\Omega|\partial\phi|^q\right)^{1/q}
\]
(see \cite{AhlforsBook}, Chapter V. D.). Thus
\[
\left(\int_\Omega|\nabla\phi|^q\right)^{1/q}
\lesssim \left(\int_\Omega|\partial\phi|^q\right)^{1/q}
\lesssim \left(\int_\Omega|\bar\partial^\ast \omega|^q\right)^{1/q},
\]
so that
\[
|T(\bar{\partial}^\ast \omega)|\lesssim (q-2)^{-1/2}|\Omega|^{1/2-1/q}\left(\int_\Omega|\bar{\partial}^\ast \omega|^q\right)^{1/q}\left(\int_\Omega|v|\right).
\]
Since $(L^q(\Omega))^*=L^p(\Omega)$, $\bar{\partial}u=v$ admits a solution $u\in L^p(\Omega)$ ($1/p+1/q=1$) with
\begin{eqnarray*}
\left(\int_\Omega|u|^p\right)^{1/p}
&\lesssim& (q-2)^{-1/2}|\Omega|^{1/2-1/q}\int_\Omega|v|\\
&\lesssim& (2-p)^{-1/2}|\Omega|^{1/p-1/2}\int_\Omega|v|.
\end{eqnarray*}
Namely, \eqref{eq:Berndtsson_Lp} holds with $C_{p,\Omega}=C_0(2-p)^{-1/2}|\Omega|^{1/p-1/2}$ for some numerical constant $C_0$.
\end{proof}

\section{Proofs of Theorem \ref{th:absolute_continuity}, Theorem \ref{th:absolute_continuity_effective} and Proposition \ref{prop:Hartogs_absolute_continuity}}

Let us first review some basic facts about the potential theory  in the plane. Let $E\subset\mathbb{C}$ be a compact set and $U$ a neighbourhood of $E$. For any Borel probability measure $\mu$ supported in $E$, the Green potential and Green energy of $\mu$ relative to $U$ are given by
\[
p_\mu(z)=\int_{U} g_{U}(z,w)d\mu(w),\ \ \ z\in U
\]
and
\[
I(\mu)=\int_{U} p_\mu d\mu =\int_{U}\int_{U} g_{U}(z,w) d\mu(z)d\mu(w).
\]
By Frostman's theorem, there is an equilibrium measure $\mu_0$, which maximizes $I(\mu)$ among all Borel probability measures on $E$, such that
\begin{itemize}
\item[$(1)$]
$p_{\mu_0}\geq I(\mu_0)$ on $U$,

\item[$(2)$]
$p_{\mu_0}=I(\mu_0)$ on the interior of $E$ and $p_{\mu_0}=I(\mu_0)$ n.e. on $\partial E$.

\item[$(3)$]
$p_{\mu_0}=0$ on $\partial U$.

\item[$(4)$]
$p_{\mu_0}$ is harmonic on $U\setminus E$.
\end{itemize}
Moreover, we have $\Delta p_{\mu_0}=2\pi\mu_0$, so that
\[
I(\mu_0)=\frac{1}{2\pi}\int_{U}p_{\mu_0}\Delta p_{\mu_0}=-\frac{1}{2\pi}\int_U|\nabla p_{\mu_0}|^2.
\]
The Green capacity of $E$ relative to $U$ is given by
\[
\mathcal{C}_g(E,U):=e^{I(\mu_0)}.
\]
Recall that the logarithmic capacity $\mathcal{C}_l(E)$ is defined in a similar way by replacing $g_U(z,w)$ with $\log|z-w|$. Since $g_U(z,w)\leq\log|z-w|-\log d(E,\partial U)$, we have
\begin{equation}\label{eq:Green_log_cap}
\mathcal{C}_g(E,U)\leq\frac{\mathcal{C}_l(E)}{d(E,\partial U)}.
\end{equation}

\begin{proof}[Proof of Theorem \ref{th:absolute_continuity} (The case $z=w$)]
The idea is to approximate $K_{\Omega\setminus{E}}(\cdot,z)$ by functions in $A^2(\Omega)$. By Choquet's theorem (cf. Ransford's book \cite{Ransford}, (5.3)), there is a sequence $\{U_j\}$ of open neighbourhoods of $E$ with
\begin{itemize}
\item[$(1)$]
$U_{j+1}\subset\subset{U_j}$,

\item[$(2)$]
$\lim_{j\rightarrow\infty}\delta_{\Omega\setminus{\overline{U}_j}}(z)=\delta_{\Omega\setminus{E}}(z)$,

\item[$(3)$]
$\lim_{j\rightarrow\infty}\mathcal{C}_l(\overline{U}_j)=\mathcal{C}_l(E)$ as $j\rightarrow\infty$.
\end{itemize}
Set $D_j:=\Delta(z,\delta_{\Omega\setminus{\overline{U}_j}}(z)/2)$. Let us consider the equilibrium measure $\mu_j$ of $\overline{U}_j$ with respect to $\mathbb{C}\setminus\overline{D}_j$ and take
\[
\chi_j:=\frac{p_{\mu_j}}{I(\mu_j)}.
\]
Then $\chi_j|_{\overline{U}_j}=1$, $\chi_j|_{\overline{D}_j}=0$ and
\[
\int_{\mathbb{C}}|\nabla\chi_j|^2=\frac{1}{I(\mu_j)^2}\int_{\mathbb{C}}|\nabla p_{\mu_j}|^2=-\frac{2\pi}{I(\mu_j)}=:c_j.
\]
Since $d(\overline{U}_j,\partial{D_j})=\delta_{\Omega\setminus{\overline{U}_j}}(z)/2$, it follows from \eqref{eq:Green_log_cap} that
\[
c_j
= -\frac{2\pi}{\log\mathcal{C}_g(\overline{U}_j,\mathbb{C}\setminus\overline{D}_j)}
\leq 2\pi\left(\log\frac{\delta_{\Omega\setminus{\overline{U}_j}}(z)}{2\mathcal{C}_l(\overline{U}_j)}\right)^{-1}
\]
when $\mathcal{C}_l(\overline{U}_j)\leq \delta_{\Omega\setminus{\overline{U}_j}}(z)/2$, so that
\begin{equation}\label{eq:limsup_capacity}
\limsup_{j\rightarrow+\infty} c_j
\leq 2\pi\left(\log\frac{r_0}{2\mathcal{C}_l(E)}\right)^{-1}
\end{equation}
provided $\mathcal{C}_l(E)<r_0/2$. 

Set
\[
v_j:=\bar{\partial}((1-\chi_j)K_{\Omega\setminus{E}}(\cdot,z))=-K_{\Omega\setminus{E}}(\cdot,z)\bar{\partial}\chi_j.
\]
Let $3/2<p<2$. In what follows, $C_{p,\Omega}$ denotes a constant of the form
\[
C_{p,\Omega}=C_0(2-p)^{-1/2}|\Omega|^{1/p-1/2},
\]
where $C_0$ is a generic numerical constant. By Theorem \ref{th:Berndtsson_Lp}, there exists a solution $u_j\in{L^p(\Omega)}$ of $\bar{\partial}u_j=v_j$ such that
\begin{eqnarray*}
\left(\int_\Omega|u_j|^p\right)^{1/p}
&\leq& C_{p,\Omega}\int_\Omega|v_j| \leq C_{p,\Omega}\int_{\Omega\setminus{E}}|K_{\Omega\setminus{E}}(\cdot,z)||\bar{\partial}\chi_j|\\
&\leq& C_{p,\Omega}\left(\int_{\Omega\setminus{E}}|K_{\Omega\setminus{E}}(\cdot,z)|^2\right)^{1/2}\left(\int_{\Omega\setminus{E}}|\bar{\partial}\chi_j|^2\right)^{1/2}\\
&\leq& C_{p,\Omega} K_{\Omega\setminus{E}}(z)^{1/2} c_j^{1/2}.
\end{eqnarray*}
Note that $u_j$ is holomorphic in $D_j=\Delta(z,\delta_{\Omega\setminus{\overline{U}_j}}(z)/2)$. Thus we get
\begin{eqnarray*}
|u_j(z)|
&\leq&\frac{1}{|\Delta(z,\delta_{\Omega\setminus{\overline{U}_j}}(z)/2)|^{1/p}}\left(\int_{\Delta(z,\delta_{\Omega\setminus{\overline{U}_j}}(z)/2)}|u_j|^p\right)^{1/p}\\
&\leq& C_{p,\Omega} K_{\Omega\setminus E}(z)^{1/2}\delta_{\Omega\setminus{E}}(z)^{-2/p} c_j^{1/2}\\
&\leq& C_{p,\Omega} K_{\Omega\setminus E}(z)^{1/2}r_0^{-2/p} c_j^{1/2}
\end{eqnarray*}
for $j\gg1$. If we define $f_j=(1-\chi_j)K_{\Omega\setminus{E}}(\cdot,z)-u_j$, then $f_j$ is holomorphic on $\Omega$, such that
\begin{eqnarray*}
|f_j(z)|
&\geq& K_{\Omega\setminus{E}}(z)-|u_j(z)|\\
&\geq& K_{\Omega\setminus{E}}(z)-C_{p,\Omega} K_{\Omega\setminus E}(z)^{1/2}r_0^{-2/p} c_j^{1/2}
\end{eqnarray*}
and
\begin{eqnarray*}
\left(\int_\Omega|f_j|^p\right)^{1/p}
&\leq& \left(\int_\Omega|(1-\chi_j)K_{\Omega\setminus{E}}(\cdot,z)|^p\right)^{1/p}+\left(\int_\Omega|u_j|^p\right)^{1/p}\\
&\leq& \left(\int_{\Omega\setminus E}|K_{\Omega\setminus{E}}(\cdot,z)|^p\right)^{1/p}+\left(\int_\Omega|u_j|^p\right)^{1/p}\\
&\leq& |\Omega|^{1/p-1/2}K_{\Omega\setminus E}(z)^{1/2}+C_{p,\Omega} K_{\Omega\setminus{E}}(z)^{1/2} c_j^{1/2},
\end{eqnarray*}
in view of H\"{o}lder's inequality. Recall that the $p$-Bergman kernel is defined by
\[
K_{\Omega,p}(z):=\sup\left\{|f(z)|^p:f\in\mathcal{O}(\Omega),\ \int_\Omega|f|^p\leq1\right\}.
\]
Thus
\[
K_{\Omega,p}(z)^{1/p}
\geq \frac{|f_j(z)|}{(\int_\Omega|f_j|^p)^{1/p}}
\geq \frac{|\Omega|^{1/2-1/p}K_{\Omega\setminus{E}}(z)^{1/2}-C_0(2-p)^{-1/2}r_0^{-2/p}c_j^{1/2}}{1+C_0(2-p)^{-1/2}c_j^{1/2}},
\]
i.e.,
\begin{eqnarray}\label{eq:difference1}
& & K_{\Omega\setminus{E}}(z)^{1/2}-K_{\Omega,p}(z)^{1/p}\nonumber\\
&\leq& \left(1-|\Omega|^{1/2-1/p}\right)K_{\Omega\setminus{E}}(z)^{1/2}+C_0(2-p)^{-1/2}\left(r_0^{-2/p}+K_{\Omega,p}(z)^{1/p}\right)c_j^{1/2}.
\end{eqnarray}
By the mean-value inequality, we have
\[
K_{\Omega\setminus{E}}(z)^{1/2}\leq\frac{1}{\pi^{1/2}\delta_{\Omega\setminus{E}}(z)}\leq\frac{1}{\pi^{1/2}r_0}
\]
and
\[
K_{\Omega,p}(z)^{1/p}\leq\frac{1}{\pi^{1/p}\delta_\Omega(z)^{2/p}}\leq\frac{1}{\pi^{1/p}r_0^{2/p}}.
\]
Then \eqref{eq:difference1} yields
\[
K_{\Omega\setminus{E}}(z)^{1/2}-K_{\Omega,p}(z)^{1/p} \leq C_1(2-p)+C_2(2-p)^{-1/2}c_j^{1/2}
\]
for some constants $C_1$ and $C_2$ depending on $r_0$ and $\Omega$. On the other hand, it follows from Proposition 6.1 in \cite{CZ22} that
\[
\lim_{p\rightarrow2-}K_{\Omega,p}(z)^{1/p}=K_\Omega(z)^{1/2},\ \ \ \forall\,z\in\Omega.
\]
Since $|\Omega|^{1/p}K_{\Omega,p}(z)^{1/p}$ is decreasing in $p$ for each fixed $z$ (cf. (6.3) in \cite{CZ22}), we infer from Dini's theorem that the above convergence is locally uniform. Thus for any $z\in\Omega$ with $r_0\leq\delta_{\Omega\setminus E}(z)\leq\delta_\Omega(z)$, we have
\[
K_{\Omega,p}(z)^{1/p}-K_\Omega(z)^{1/2}\leq \varepsilon_p
\]
for some constant $\varepsilon_p$ with  $\lim_{p\rightarrow2-}\varepsilon_p=0$. Thus
\[
K_{\Omega\setminus{E}}(z)^{1/2}-K_{\Omega}(z)^{1/2}\leq C_1(2-p)+\varepsilon_p+C_2(2-p)^{-1/2}c_j^{1/2}.
\]
Letting $j\rightarrow+\infty$, we have
\begin{equation}\label{eq:difference2}
K_{\Omega\setminus{E}}(z)^{1/2}-K_{\Omega}(z)^{1/2}\leq C_1(2-p)+\varepsilon_p+C_2(2-p)^{-1/2}\left(\log\frac{r_0}{2\mathcal{C}_l(E)}\right)^{-1/2}
\end{equation}
in view of \eqref{eq:limsup_capacity}, from which the assertion immediately follows, for the last term converges to zero as $\mathcal{C}_l(E)\rightarrow0+$ for any fixed $p\in(3/2,2)$.
\end{proof}

\begin{remark}
It seems that the Sobolev inequality
\begin{equation}\label{eq:Sobolev}
\left(\int_\Omega|\phi|^{\frac{2p}{2-p}}\right)^{\frac{2-p}{2p}}\leq C_p\left(\int_\Omega|\nabla \phi|^p\right)^{1/p},\ \ \ \forall\,\phi\in{C^\infty_0(\Omega)},\ 1\leq p<2
\end{equation}
does not work in the above argument. Thus we have to use a limit case of \eqref{eq:Sobolev}, i.e., the Sobolev-Morrey inequality \eqref{eq:Morrey}.
\end{remark}

The general case is a direct consequence of the following result,  which is communicated to us by Xu Wang at NTNU, Trondheim.

\begin{proposition}\label{prop:Bergman_monotonic}
Let $\Omega\subset\mathbb{C}^n$ be a domain. Define
\[
R_\Omega^{\pm}(z,w):=K_\Omega(z)+K_\Omega(w)\pm2\mathrm{Re}\,K_\Omega(z,w)
\]
and
\[
I_\Omega^{\pm}(z,w):=K_\Omega(z)+K_\Omega(w)\pm2\mathrm{Im}\,K_\Omega(z,w).
\]
Then
\[
R_\Omega^{\pm}(z,w)=\max\left\{|f(z)\pm f(w)|^2:f\in{A^2(\Omega)},\ \|f\|_{L^2(\Omega)}=1\right\},
\]
\[
I_\Omega^{\pm}(z,w)=\max\left\{|f(z)\mp \mathrm{i}f(w)|^2:f\in{A^2(\Omega)},\ \|f\|_{L^2(\Omega)}=1\right\}.
\]
In particular, if\/ $\Omega_1\subset\Omega_2\subset\mathbb{C}^n$ are domains, then
\begin{equation}\label{eq:Bergman_monotonic_0}
R_{\Omega_1}^{\pm}(z,w)\geq R_{\Omega_2}^{\pm}(z,w),\ \ \ I_{\Omega_1}^{\pm}(z,w)\geq I_{\Omega_2}^{\pm}(z,w).
\end{equation}
\end{proposition}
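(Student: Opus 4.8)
The plan is to recognize that each of $R_\Omega^{\pm}$ and $I_\Omega^{\pm}$ is a diagonal value of the Bergman kernel of $\Omega$ evaluated against a single \emph{point functional} built from $z$ and $w$. Concretely, consider the evaluation functionals $\delta_z, \delta_w \in (A^2(\Omega))^*$, which are bounded since $\delta_z(f)=(f,K_\Omega(\cdot,z))$. Form the combined functional $L^{\pm}_{R}:=\delta_z\pm\delta_w$, so that $L^{\pm}_{R}(f)=f(z)\pm f(w)=(f,K_\Omega(\cdot,z)\pm K_\Omega(\cdot,w))$ for every $f\in A^2(\Omega)$. By the standard extremal characterization of the norm of a functional on a Hilbert space (equivalently, Cauchy--Schwarz with the equality case),
\[
\max\left\{|f(z)\pm f(w)|^2: f\in A^2(\Omega),\ \|f\|_{L^2(\Omega)}=1\right\}
= \bigl\|K_\Omega(\cdot,z)\pm K_\Omega(\cdot,w)\bigr\|_{L^2(\Omega)}^2.
\]
Expanding the right-hand side via the reproducing property, $\|K_\Omega(\cdot,z)\|^2=K_\Omega(z)$, $\|K_\Omega(\cdot,w)\|^2=K_\Omega(w)$, and $(K_\Omega(\cdot,z),K_\Omega(\cdot,w))=K_\Omega(w,z)=\overline{K_\Omega(z,w)}$, gives exactly $K_\Omega(z)+K_\Omega(w)\pm2\operatorname{Re}K_\Omega(z,w)=R^{\pm}_\Omega(z,w)$. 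This proves the first identity.

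For the second identity I would repeat the argument with the functional $L^{\pm}_{I}(f):=f(z)\mp \mathrm{i}\,f(w)=(f,\,K_\Omega(\cdot,z)\mp\overline{\mathrm{i}}\,K_\Omega(\cdot,w))=(f,\,K_\Omega(\cdot,z)\pm \mathrm{i}\,K_\Omega(\cdot,w))$; the only subtlety is keeping track of conjugation when pulling the scalar $\mp\mathrm i$ across the (conjugate-linear in the second slot) inner product. Then $\|K_\Omega(\cdot,z)\pm\mathrm i K_\Omega(\cdot,w)\|^2=K_\Omega(z)+K_\Omega(w)+2\operatorname{Re}\bigl(\pm\mathrm i\,\overline{(K_\Omega(\cdot,z),K_\Omega(\cdot,w))}\bigr)$; since $(K_\Omega(\cdot,z),K_\Omega(\cdot,w))=\overline{K_\Omega(z,w)}$ one gets $2\operatorname{Re}(\pm\mathrm i\,K_\Omega(z,w))=\mp2\operatorname{Im}K_\Omega(z,w)$, and a sign bookkeeping check shows this matches $I^{\mp}_\Omega$ against $L^{\pm}_I$ as stated. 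The extremal value is attained at $f=c\,(K_\Omega(\cdot,z)\pm K_\Omega(\cdot,w))$ (resp.\ with $\mathrm i$), suitably normalized, so the ``$\max$'' is genuinely achieved and the formulas are equalities, not merely inequalities.

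The monotonicity \eqref{eq:Bergman_monotonic_0} is then immediate and is the payoff of having the extremal description. If $\Omega_1\subset\Omega_2$, restriction gives an isometric inclusion of the competitor set: for any $f\in A^2(\Omega_2)$ with $\|f\|_{L^2(\Omega_2)}=1$ we have $\|f\|_{L^2(\Omega_1)}\le 1$, so $f/\|f\|_{L^2(\Omega_1)}$ is an admissible competitor for $\Omega_1$ with $|f(z)\pm f(w)|^2/\|f\|_{L^2(\Omega_1)}^2\ge |f(z)\pm f(w)|^2$. Taking the supremum over such $f$ yields $R^{\pm}_{\Omega_1}(z,w)\ge R^{\pm}_{\Omega_2}(z,w)$, and likewise for $I^{\pm}$. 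I do not foresee a real obstacle here; the only point requiring a moment's care is the conjugation bookkeeping in the imaginary-part identity, and the verification that evaluation functionals are bounded on $A^2(\Omega)$ so that the Hilbert-space duality applies—both of which are standard for the Bergman space.
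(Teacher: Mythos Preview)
Your proposal is correct and follows essentially the same route as the paper's proof: both express $f(z)\pm f(w)$ as the inner product $(f,\,K_\Omega(\cdot,z)\pm K_\Omega(\cdot,w))$ via the reproducing property, apply Cauchy--Schwarz with its equality case to identify the maximum as $\|K_\Omega(\cdot,z)\pm K_\Omega(\cdot,w)\|_{L^2(\Omega)}^2$, and then expand this norm to obtain $R^{\pm}_\Omega(z,w)$; the paper in fact only writes out the $R^{\pm}$ case and leaves $I^{\pm}$ implicit, whereas you sketch the $I^{\pm}$ bookkeeping as well. One small terminological slip: restriction $A^2(\Omega_2)\to A^2(\Omega_1)$ is norm-\emph{decreasing}, not an ``isometric inclusion''; your subsequent renormalization argument is nonetheless exactly the right one and matches how the monotonicity is deduced in the paper.
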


\begin{proof}
We shall only prove the proposition for $R_\Omega^{\pm}$. By the reproducing formula, we have
\[
f(z)\pm f(w)=\int_\Omega f\cdot\overline{K_\Omega(\cdot,z)\pm K_\Omega(\cdot,w)},\ \ \ \forall\,f\in{A^2(\Omega)}.
\]
This combined with the Cauchy-Schwarz inequality yields
\begin{equation}\label{eq:Bergman_monotonic}
|f(z)\pm f(w)|^2\leq\|K_\Omega(\cdot,z)\pm K_\Omega(\cdot,w)\|_{L^2(\Omega)},
\end{equation}
provided $\|f\|_{L^2(\Omega)}=1$. Equality in \eqref{eq:Bergman_monotonic} holds if
\[
f=\frac{K_\Omega(\cdot,z)\pm K_\Omega(\cdot,w)}{\|K_\Omega(\cdot,z)\pm K_\Omega(\cdot,w)\|_{L^2(\Omega)}},
\]
so that
\[
\|K_\Omega(\cdot,z)\pm K_\Omega(\cdot,w)\|_{L^2(\Omega)}=\max\left\{|f(z)\pm f(w)|^2:f\in{A^2(\Omega)},\ \|f\|_{L^2(\Omega)}=1\right\}.
\]
Finally, a straightforward calculation shows $\|K_\Omega(\cdot,z)\pm K_\Omega(\cdot,w)\|_{L^2(\Omega)}=R^{\pm}_\Omega(z,w)$.
\end{proof}

\begin{proof}[Proof of Theorem \ref{th:absolute_continuity} (The general case)]
Take $\Omega_1=\Omega\setminus E$ and $\Omega_2=\Omega$ in \eqref{eq:Bergman_monotonic_0}, we have
\begin{eqnarray*}
2\left|\mathrm{Re}\left(K_{\Omega\setminus{E}}(z,w)-K_\Omega(z,w)\right)\right|
&\leq& \left(K_{\Omega\setminus{E}}(z)-K_\Omega(z)\right)+\left(K_{\Omega\setminus{E}}(w)-K_\Omega(w)\right),\\
2\left|\mathrm{Im}\left(K_{\Omega\setminus{E}}(z,w)-K_\Omega(z,w)\right)\right|
&\leq& \left(K_{\Omega\setminus{E}}(z)-K_\Omega(z)\right)+\left(K_{\Omega\setminus{E}}(w)-K_\Omega(w)\right).
\end{eqnarray*}
Thus
\[
\left|K_{\Omega\setminus{E}}(z,w)-K_\Omega(z,w)\right|
\leq \left(K_{\Omega\setminus{E}}(z)-K_\Omega(z)\right)+\left(K_{\Omega\setminus{E}}(w)-K_\Omega(w)\right).
\]
We are reduced to the special case proved earlier.
\end{proof}

\begin{proof}[Proof of Theorem \ref{th:absolute_continuity_effective}]
In \cite{CX24}, we showed that for any compact set $S\subset \Omega$,
\[
\left|K_{\Omega,p}(z)^{1/p}-K_\Omega(z)^{1/2}\right|\leq C_{\Omega,S} (2-p)\log\frac{1}{2-p},\ \ \ \forall\,z\in S
\]
when $0<2-p\ll1$. In particular, we have
\[
\left|K_{\Omega,p}(z)^{1/p}-K_\Omega(z)^{1/2}\right|\leq C_{\alpha,r_0,\Omega}(2-p)^{\frac{\alpha}{1-2\alpha}}
\]
for any $0<\alpha<1/3$. This cimbined with \eqref{eq:difference2} yields
\[
K_{\Omega\setminus{E}}(z)^{1/2}-K_{\Omega}(z)^{1/2}\leq C_{\alpha,r_0,\Omega}(2-p)^{\frac{\alpha}{1-2\alpha}}+C_2(2-p)^{-1/2}\left[\log\frac{r_0}{2\mathcal{C}_l(E)}\right]^{-1/2}.
\]
The assertion follows if we take $2-p\asymp (\log\frac{r_0}{2\mathcal{C}_l(E)})^{2\alpha-1}$.
\end{proof}

\begin{proof}[Proof of Proposition \ref{prop:Hartogs_absolute_continuity}]
Again it suffices to consider the case $z=w$.  Thanks to Theorem \ref{th:Hartogs_L2}, $K_{\Omega\setminus{E}}(\cdot,z)$ can be extended to an $L^2$ holomorphic function $F$ on $\Omega$. Thus for any $z\in\Omega\setminus{E}$,  we have
\begin{equation}\label{eq:difference_volume}
K_\Omega(z)\geq\frac{|F(z)|^2}{\int_\Omega|F|^2}=\frac{K_{\Omega\setminus E}(z)^2}{K_{\Omega\setminus E}(z)+\int_E|F|^2}.
\end{equation}
By the first condition of Theorem \ref{th:Hartogs_L2}, we obtain $B_r(\zeta)\subset\Omega$ for any $\zeta\in E$ and some $r>0$, so that
\[
|F(\zeta)|^2\leq\frac{1}{\omega_n r^{2n}}\int_\Omega|F|^2.
\]
Here $\omega_n=\pi^n/n!$ is the volume of the unit ball in $\mathbb{C}^n$. This combined with \eqref{eq:Hartogs_estimate} yields
\[
|F(\zeta)|^2\leq\frac{C_{E_0}}{\omega_n r^{2n}}\int_{\Omega\setminus E}|K_{\Omega\setminus E}(\cdot,z)|^2=\frac{C_{E_0}}{\omega_n r^{2n}}K_{\Omega\setminus E}(z),\ \ \ \forall\,\zeta\in{E}.
\]
Thus \eqref{eq:difference_volume} gives
\[
K_\Omega(z)\geq\frac{K_{\Omega\setminus E}(z)}{1+C_{E_0}\omega_n^{-1}r^{-2n}|E|},
\]
i.e.,
\[
K_{\Omega\setminus E}(z)-K_\Omega(z)\leq\frac{C_{E_0}}{\omega_n r^{2n}}|E|K_\Omega(z),
\]
from which the assertion follows immediately.
\end{proof}

Conversely, we have

\begin{proposition}\label{prop:capacity_zero}
Let $\Omega\subset\mathbb{C}$ be a domain and $\{E_j\}$ a sequence of closed subsets in $\Omega$ with $E_{j+1}\subset{E_j}$ and $\bigcap^\infty_{j=1}E_j=E$. If $\lim_{j\rightarrow+\infty}K_{\Omega\setminus E_j}(z)=K_\Omega(z)$ for any $z\in\Omega\setminus E$, then $E$ is a polar set, i.e., $\mathcal{C}_l(E)=0$.
\end{proposition}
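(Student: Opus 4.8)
The plan is to argue by contradiction: if $E$ is not polar I will construct a function in $A^{2}(\Omega\setminus E)$ with no holomorphic extension to $\Omega$, while showing that the hypothesis forces \emph{every} element of $A^{2}(\Omega\setminus E)$ to extend. Throughout I may assume $\Omega$ is connected, $E\subsetneq\Omega$, and $A^{2}(\Omega)\neq\{0\}$ (the case $A^{2}(\Omega)=\{0\}$, which forces $\Omega$ unbounded, is disposed of by a minor variant of the construction in the third paragraph, using that every component of $\Omega\setminus E$ is then unbounded). First I would upgrade the hypothesis to an equality of kernels: for $z\in\Omega\setminus E$ one has $z\notin E_{j}$ for all large $j$, and $\{\Omega\setminus E_{j}\}_{j\gg1}$ is an increasing family of open sets with union $\Omega\setminus E$, so the classical fact that $K_{D_{j}}\to K_{D}$ for an increasing exhaustion $D_{j}\uparrow D$ (a normal-families argument together with Fatou's lemma) gives $K_{\Omega\setminus E_{j}}(z)\downarrow K_{\Omega\setminus E}(z)$; combined with the hypothesis this yields $K_{\Omega\setminus E}(z)=K_{\Omega}(z)$ for all $z\in\Omega\setminus E$.

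I would then invoke Proposition~\ref{prop:Bergman_monotonic} with $\Omega_{1}=\Omega\setminus E$ and $\Omega_{2}=\Omega$: its four inequalities for $R^{\pm}$ and $I^{\pm}$, together with the equality of the diagonal kernels just obtained, force $K_{\Omega\setminus E}(z,w)=K_{\Omega}(z,w)$ for all $z,w\in\Omega\setminus E$, i.e. $K_{\Omega\setminus E}(\cdot,z)=K_{\Omega}(\cdot,z)|_{\Omega\setminus E}$. Comparing $L^{2}$ norms then gives $\int_{E}|K_{\Omega}(\cdot,z)|^{2}=K_{\Omega}(z)-K_{\Omega\setminus E}(z)=0$; since $A^{2}(\Omega)\neq\{0\}$, some $K_{\Omega}(\cdot,z_{0})$ is not identically zero, whence $|E|=0$. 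Therefore $L^{2}(\Omega\setminus E)=L^{2}(\Omega)$, and for every $f\in A^{2}(\Omega\setminus E)$ the reproducing property gives
\[
f(z)=\langle f,K_{\Omega\setminus E}(\cdot,z)\rangle_{\Omega\setminus E}=\langle f,K_{\Omega}(\cdot,z)\rangle_{\Omega}=(P_{\Omega}f)(z),\qquad z\in\Omega\setminus E,
\]
with $P_{\Omega}$ the Bergman projection of $\Omega$, so $P_{\Omega}f\in A^{2}(\Omega)$ extends $f$. Hence the hypothesis forces $A^{2}(\Omega\setminus E)=A^{2}(\Omega)|_{\Omega\setminus E}$, and it remains to exhibit a non-extendable element of $A^{2}(\Omega\setminus E)$ when $\mathcal{C}_{l}(E)>0$.

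Assume $\mathcal{C}_{l}(E)>0$. By inner regularity of logarithmic capacity I would choose a compact $E_{0}\subseteq E$ with $\mathcal{C}_{l}(E_{0})>0$; being compact and contained in $\Omega$, $E_{0}\subset\subset\Omega$. Its equilibrium measure $\mu_{0}$ has finite logarithmic energy, hence no atoms, so I may split $E_{0}$ into disjoint Borel sets $A,B$ with $\mu_{0}(A)=\mu_{0}(B)=\tfrac12$ and set $\nu:=2\mu_{0}|_{A}-2\mu_{0}|_{B}$, a nonzero finite-energy signed measure with $\operatorname{supp}\nu\subseteq E_{0}$ and $\nu(\mathbb{C})=0$. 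I would take $h$ to be the Cauchy transform $h(z):=\int(z-w)^{-1}\,d\nu(w)$, which is holomorphic on $\mathbb{C}\setminus E_{0}\supseteq\Omega\setminus E$. Writing $h=-2\,\partial_{z}U^{\nu}$ with $U^{\nu}$ the logarithmic potential of $\nu$, the vanishing of $\nu(\mathbb{C})$ gives $U^{\nu}(z)=O(|z|^{-1})$ and $\nabla U^{\nu}(z)=O(|z|^{-2})$ as $|z|\to\infty$, so the boundary term in Green's identity drops out and
\[
\int_{\mathbb{C}}|h|^{2}=\int_{\mathbb{C}}|\nabla U^{\nu}|^{2}=2\pi\iint\log\frac{1}{|z-w|}\,d\nu(z)\,d\nu(w)<\infty ,
\]
the last integral being finite because $E_{0}$ is bounded and $\mu_{0}$ has finite energy. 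Hence $h\in A^{2}(\Omega\setminus E)$. On the other hand $\bar{\partial}h=\pi\nu$ in the sense of distributions, with $\nu\neq0$ supported in the compact set $E_{0}\subset\Omega$; so if some $F\in A^{2}(\Omega)$ agreed with $h$ on $\Omega\setminus E$ then $F=h$ a.e. on $\Omega$ (as $|E|=0$), forcing $\pi\nu=\bar{\partial}F=0$ on $\Omega$, a contradiction. Thus $h$ has no holomorphic extension to $\Omega$, contradicting the conclusion of the previous paragraph, and so $\mathcal{C}_{l}(E)=0$.

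I expect the main obstacle to be the construction in the third paragraph: extracting from non-polarity a Cauchy transform that is square-integrable on \emph{all} of $\mathbb{C}$ rather than just locally. A probability measure on $E_{0}$ yields a Cauchy transform decaying only like $|z|^{-1}$, which is not in $L^{2}$ near infinity when $\Omega$ is unbounded of infinite area; this is precisely why I pass to a finite-energy signed measure of total mass zero, whose Cauchy transform decays like $|z|^{-2}$ and for which Green's identity collapses to the clean global $L^{2}$ identity above. Once $h$ is in hand, its non-extendability is immediate from $\bar{\partial}h=\pi\nu\neq0$. The remaining ingredients — the exhaustion convergence of Bergman kernels, the monotonicity of Proposition~\ref{prop:Bergman_monotonic}, the vanishing of $|E|$, and the reproducing-formula reduction — are routine.
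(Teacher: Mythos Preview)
Your argument is correct and shares the paper's architecture: Ramadanov gives $K_{\Omega\setminus E}(z)=K_\Omega(z)$, then the off-diagonal kernels agree, then the reproducing formula shows every $f\in A^2(\Omega\setminus E)$ extends via the Bergman projection, and finally one invokes non-removability of non-polar sets. Two differences are worth noting. First, for the off-diagonal step the paper bypasses Proposition~\ref{prop:Bergman_monotonic} and simply expands
\[
\int_{\Omega\setminus E}\bigl|K_{\Omega\setminus E}(\cdot,z)-K_\Omega(\cdot,z)\bigr|^2
= K_{\Omega\setminus E}(z)-2K_\Omega(z)+\int_{\Omega\setminus E}|K_\Omega(\cdot,z)|^2
\le K_{\Omega\setminus E}(z)-K_\Omega(z)=0,
\]
which is a line shorter and does not need the auxiliary $|E|=0$ step. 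Second, and more substantially, the paper ends by citing Carleson's removability theorem as a black box, whereas you effectively reprove the relevant direction by constructing the Cauchy transform $h$ of a mass-zero finite-energy signed measure on a compact non-polar $E_0\subset E$. Your construction is sound (the mass-zero trick is exactly what is needed to force $h\in L^2(\mathbb C)$, and $\bar\partial h=\pi\nu\neq0$ kills any holomorphic extension once $|E|=0$), so your proof is self-contained at the cost of some extra length; the paper's is shorter but leans on a nontrivial classical citation.
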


\begin{proposition}\label{prop:volume_zero}
Let $\Omega\subset\mathbb{C}^n$ ($n\geq2$) be a domain and $\{E_j\}$ a sequence of closed subsets in $\Omega$ with $E_{j+1}\subset{E_j}$ and $\bigcap^\infty_{j=1}E_j=E$. If $\lim_{j\rightarrow+\infty}K_{\Omega\setminus E_j}(z)=K_\Omega(z)\neq0$ for some $z\in\Omega\setminus E$, then $|E|=0$.
\end{proposition}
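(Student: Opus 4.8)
The plan is to establish the contrapositive: assuming $|E|>0$, I will show that $K_{\Omega\setminus E}(z)>K_\Omega(z)$ at every point $z\in\Omega\setminus E$ with $K_\Omega(z)\neq0$, which is incompatible with the hypothesis. The first step is a reduction of the limit hypothesis to a statement about $\Omega\setminus E$ itself. Since $E_{j+1}\subset E_j$ and $\bigcap_jE_j=E$, the open sets $\Omega\setminus E_j$ increase to $\Omega\setminus E$, and for $j$ large we have $z\in\Omega\setminus E_j\subset\Omega\setminus E\subset\Omega$ (as $z\notin E=\bigcap_jE_j$). Using only that $\Omega'\subset\Omega''$ implies $K_{\Omega'}(z)\geq K_{\Omega''}(z)$, this gives
\[
K_{\Omega\setminus E_j}(z)\ \geq\ K_{\Omega\setminus E}(z)\ \geq\ K_\Omega(z),\qquad j\gg1 .
\]
Letting $j\to\infty$ and invoking the hypothesis $\lim_jK_{\Omega\setminus E_j}(z)=K_\Omega(z)$ forces $K_{\Omega\setminus E}(z)=K_\Omega(z)$. (Note that no convergence theorem for Bergman kernels of exhausting sequences is needed here, only monotonicity under inclusion.) It therefore suffices to prove: if $K_{\Omega\setminus E}(z)=K_\Omega(z)\neq0$ for some $z\in\Omega\setminus E$, then $|E|=0$.

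The heart of the matter is a one-line extremal estimate. Suppose $|E|>0$ and put $g:=K_\Omega(\cdot,z)\in A^2(\Omega)$. Since $K_\Omega(z)=K_\Omega(z,z)=\|g\|_{L^2(\Omega)}^2\neq0$, the holomorphic function $g$ is not identically $0$, so its zero set is a proper complex-analytic subvariety of $\Omega$, hence Lebesgue-null in $\mathbb{C}^n=\mathbb{R}^{2n}$. Consequently $\int_E|g|^2>0$ and
\[
\int_{\Omega\setminus E}|g|^2\ =\ \int_\Omega|g|^2-\int_E|g|^2\ <\ \int_\Omega|g|^2\ =\ K_\Omega(z).
\]
Because $g$ restricts to a nonzero element of $A^2(\Omega\setminus E)$ (it does not vanish at $z$), the extremal characterization of the Bergman kernel yields
\[
K_{\Omega\setminus E}(z)\ \geq\ \frac{|g(z)|^2}{\int_{\Omega\setminus E}|g|^2}\ >\ \frac{|g(z)|^2}{K_\Omega(z)}\ =\ \frac{K_\Omega(z)^2}{K_\Omega(z)}\ =\ K_\Omega(z),
\]
which contradicts the equality obtained above. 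Hence $|E|=0$, completing the proof.

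I do not expect a genuine obstacle here; the content lies in the observation that, unlike the planar case (Proposition \ref{prop:capacity_zero}) or Proposition \ref{prop:Hartogs_absolute_continuity}, one needs no Hartogs-type extension of $L^2$ holomorphic functions across $E$. The mere fact that a nonzero holomorphic function cannot vanish on a set of positive Lebesgue measure already makes $K_\Omega(\cdot,z)|_{\Omega\setminus E}$ a strictly better competitor for the extremal problem on $\Omega\setminus E$ than it is on $\Omega$, as soon as $|E|>0$. The same argument works verbatim for $n=1$, but there Proposition \ref{prop:capacity_zero} supplies the stronger conclusion that $E$ is polar, which is why the statement is recorded only for $n\geq2$.
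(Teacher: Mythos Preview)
Your proof is correct and follows essentially the same route as the paper: both arguments rest on the observation that $g=K_\Omega(\cdot,z)$ satisfies $\int_E|g|^2=0$ once $K_{\Omega\setminus E}(z)=K_\Omega(z)$, and that a nonzero holomorphic function cannot vanish on a set of positive measure. The paper first deduces $K_{\Omega\setminus E}(\cdot,z)=K_\Omega(\cdot,z)$ on $\Omega\setminus E$ (via the $L^2$ computation from the proof of Proposition~\ref{prop:capacity_zero}) and then reads off $\int_E|g|^2=0$ directly, whereas you package the same computation as the contrapositive extremal estimate $K_{\Omega\setminus E}(z)\geq|g(z)|^2/\int_{\Omega\setminus E}|g|^2>K_\Omega(z)$; these are the same inequality rearranged. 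Your reduction step is in fact a slight simplification: the paper invokes Ramadanov's theorem (via the ``Again'' reference) to obtain $K_{\Omega\setminus E}(z)=K_\Omega(z)$, while your sandwich $K_{\Omega\setminus E_j}(z)\geq K_{\Omega\setminus E}(z)\geq K_\Omega(z)$ uses only monotonicity under inclusion and is all that is needed here.
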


\begin{proof}[Proof of Proposition \ref{prop:capacity_zero}]
It follows from Ramadanov's theorem that
\[
K_{\Omega\setminus{E}}(z)=\lim_{j\rightarrow+\infty}K_{\Omega\setminus E_j}(z)=K_\Omega(z).
\]
By the reproducing formula, we have
\begin{eqnarray*}
\int_{\Omega\setminus E}|K_{\Omega\setminus E}(\cdot,z)-K_\Omega(\cdot,z)|^2
&=& K_{\Omega\setminus E}(z)-2K_\Omega(z)+\int_{\Omega\setminus E}|K_\Omega(\cdot,z)|^2\\
&\leq& K_{\Omega\setminus E}(z)-2K_\Omega(z)+\int_\Omega|K_\Omega(\cdot,z)|^2\\
&=& K_{\Omega\setminus E}(z)-K_\Omega(z)\\
&=& 0,
\end{eqnarray*}
i.e., $K_{\Omega\setminus E}(\cdot,z)=K_\Omega(\cdot,z)$ on $\Omega\setminus E$. Thus for any $f\in{A^2(\Omega\setminus E)}$,
\begin{equation}\label{eq:f_extension}
f(z)=\int_{\Omega\setminus E} f\overline{K_{\Omega\setminus E}(\cdot,z)}=\int_{\Omega\setminus E} f\overline{K_\Omega(\cdot,z)}=\int_\Omega \chi_{\Omega\setminus E}f\overline{K_\Omega(\cdot,z)},\ \ \ \forall\,z\in\Omega\setminus E.
\end{equation}
Here, $\chi_{\Omega\setminus E}$ denotes the chracteristic function of $\Omega\setminus E$. Since the last integral in \eqref{eq:f_extension} is the Bergman projection of $\chi_{\Omega\setminus E}f$ onto $A^2(\Omega)$, it follows that $f$ can be extended to some function in $A^2(\Omega)$. By Carleson's theorem, we conclude $\mathcal{C}_l(E)=0$.
\end{proof}

\begin{proof}[Proof of Proposition \ref{prop:volume_zero}]
Again, we have
\[
K_{\Omega\setminus E}(\cdot,z)=K_\Omega(\cdot,z)
\]
on $\Omega\setminus E$. Thus
\begin{eqnarray*}
\int_\Omega|K_\Omega(\cdot,z)|^2
&=& K_\Omega(z) = K_{\Omega\setminus E}(z) = \int_{\Omega\setminus E}|K_{\Omega\setminus E}(\cdot,z)|^2\\
&=& \int_{\Omega\setminus E}|K_\Omega(\cdot,z)|^2,\ \ \ z\in\Omega\setminus E,
\end{eqnarray*}
so that $\int_E|K_\Omega(\cdot,z)|^2=0$. Since $K_\Omega(\cdot,z)$ is not identical zero on $\Omega$, we obtain $|E|=0$.
\end{proof}

We conclude this section by proposing the following
\begin{question}
(1) Let $\Omega\subset\mathbb{C}$ be a bounded domain. Given $\varepsilon>0$, does there exists some $\gamma>0$ such that
\[
\kappa(\Omega\setminus E)-\kappa(\Omega)<\varepsilon
\]
whenever $\mathcal{C}_l(E)<\gamma$?

(2) Let $\Omega$ be a domain in $\mathbb{C}^n$ ($n\geq2$). Given $\varepsilon>0$, does there exists some $\gamma>0$ such that
\[
\kappa(\Omega\setminus E)-\kappa(\Omega)<\varepsilon
\]
whenever $E$ is a closed set in $\mathbb{C}^n$ which satisfies the conditions (1)-(3) in Theorem \ref{th:Hartogs_L2} with $|E|<\gamma$?
\end{question}

\section{Minimum of the Bergman kernel}

\begin{proof}[Proof of Theorem \ref{th:minimum_value}]
Let $g_\Omega$ denote the (negative) Green function of a domain $\Omega\subset\mathbb{C}_\infty$, where $\mathbb{C}_\infty$ is the Riemann sphere. For $z\in\Omega\setminus\{\infty\}$, we set
\[
c_\Omega(z):=\exp\left(\lim_{\zeta\rightarrow z} \left(g_\Omega(\zeta,z)-\log|\zeta-z|\right)\right),
\]
i.e.,
\[
g_\Omega(\zeta,z)=\log|\zeta-z|+\log{c_\Omega(z)}+o(1),\ \ \ \zeta\rightarrow{z}.
\]
On the other hand, if $\infty\in\Omega$, then
\[
g_\Omega(\zeta,\infty)=-\log|\zeta|+\log{\mathcal{C}_l(\mathbb{C}_\infty\setminus\Omega)}+o(1),\ \ \ \zeta\rightarrow\infty
\]
(see, e.g., Ransford \cite{Ransford}, Theorem 5.2.1). Let $\Omega\subset\mathbb{C}$. Given $z\in\Omega$, define
\[
\Phi_z(\zeta)=\frac{1}{\zeta-z},\ \ \ \zeta\in\mathbb{C}_\infty.
\]
Clearly, $\Phi_z(\Omega)$ is a domain in $\mathbb{C}_\infty$ and
\begin{eqnarray*}
g_\Omega(\zeta,z)
&=& g_{\Phi_z(\Omega)}(\Phi_z(\zeta),\Phi_z(z)) = g_{\Phi_z(\Omega)}\left(\frac{1}{\zeta-z},\infty\right)\\
&=& \log|\zeta-z|+\log{\mathcal{C}_l(\mathbb{C}_\infty\setminus{\Phi_z(\Omega)})}+o(1),\ \ \ \zeta\rightarrow{z}.
\end{eqnarray*}
Thus
\begin{equation}\label{eq:Robin_constant}
c_\Omega(z)=\mathcal{C}_l(\mathbb{C}_\infty\setminus{\Phi_z(\Omega)})=\mathcal{C}_l(\Phi_z(\mathbb{C}_\infty\setminus\Omega)).
\end{equation}
By definition, there exists a sequence $r_j\downarrow\mathscr{R}_{L,\alpha}(\Omega)$ such that
\begin{equation}\label{eq:capacity_radius_lower}
\mathcal{C}_l(\overline{\Delta(z,r_j)}\setminus\Omega)>\alpha\cdot r_j
\end{equation}
for all $z\in\Omega$. Note that
\begin{equation}\label{eq:Phi_z}
|\Phi_z^{-1}(\zeta_1)-\Phi_z^{-1}(\zeta_2)|\leq r_j^2|\zeta_1-\zeta_2|,\ \ \ \forall\,\zeta_1,\zeta_2\in\Phi_z(\overline{\Delta(z,r_j)}).
\end{equation}
We have
\begin{eqnarray}\label{eq:lower_Robin_2}
c_\Omega(z)
&\geq& {\mathcal{C}_l\left(\Phi_z\left(\overline{\Delta(z,r_j)}\setminus\Omega\right)\right)}\ \ \ \text{(by \eqref{eq:Robin_constant})}\nonumber\\
&\geq& \frac{\mathcal{C}_l(\overline{\Delta(z,r_j)}\setminus\Omega)}{r_j^2}\ \ \ \text{(by \eqref{eq:Phi_z} and Theorem 5.3.1 in \cite{Ransford})}\nonumber\\
&>& \frac{\alpha}{r_j}\ \ \ \text{(by \eqref{eq:capacity_radius_lower})}\nonumber\\
&\rightarrow& \frac{\alpha}{\mathscr{R}_{L,\alpha}(\Omega)}\ \ \ (j\rightarrow+\infty).
\end{eqnarray}
The conclusion follows immediately from \eqref{eq:lower_Robin_2} and the following estimate B{\l}ocki \cite{Blocki13}:
\[
K_\Omega(z)\geq\frac{c_\Omega(z)^2}{\pi},\ \ \ \forall\,z\in\Omega.\qedhere
\]
\end{proof}

To study the higher dimensional case, we shall make use of the following special case of Ohsawa-Takegoshi extension theorem (cf. \cite{OT87}):

\begin{theorem}\label{th:OT}
Let $\Omega\subset\mathbb{C}^n$ be a bounded pseudoconvex domain and $\mathcal{L}\subset\mathbb{C}^n$ a complex line. For any $f\in{A^2(\Omega\cap\mathcal{L})}$, there exists $F\in{A^2(\Omega)}$ such that
\[
\int_\Omega|F|^2\leq C_n\mathscr{D}(\Omega)^{2n-2}\int_{\Omega\cap\mathcal{L}}|f|^2,
\]
where $\mathscr{D}(\Omega)$ denotes the diameter of\/ $\Omega$.
\end{theorem}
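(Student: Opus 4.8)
The plan is to deduce Theorem~\ref{th:OT} from the classical codimension-one Ohsawa--Takegoshi extension theorem of \cite{OT87} by iterating it $n-1$ times along a flag of complex-linear slices of $\Omega$.

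First I would put $\mathcal{L}$ into a normal position. If $\Omega\cap\mathcal{L}=\emptyset$ there is nothing to prove, so fix $p\in\Omega\cap\mathcal{L}$ and translate $\mathbb{C}^n$ so that $p=0$; then $0\in\Omega$, hence $\Omega\subset\overline{B(0,\mathscr{D}(\Omega))}$, and $\mathcal{L}$ becomes a complex line through the origin. Applying a unitary transformation $U$ (which fixes $0$ and preserves Lebesgue measure, holomorphy, pseudoconvexity, the ball $\overline{B(0,\mathscr{D}(\Omega))}$, and the diameter) we may assume $\mathcal{L}=\{z'=0\}$ is the $z_n$-axis, where $z'=(z_1,\dots,z_{n-1})$. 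In particular $\sup_{z\in\Omega}|z_k|\le\mathscr{D}(\Omega)$ for every $k=1,\dots,n$. Set $V_j:=\{z_1=\cdots=z_j=0\}$ for $j=0,1,\dots,n-1$, so that $V_0=\mathbb{C}^n$, $V_{n-1}=\mathcal{L}$, and $V_j=V_{j-1}\cap\{z_j=0\}$.

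Next I would run the extension from the inside out. Put $f_{n-1}:=f\in A^2(\Omega\cap V_{n-1})$. For $j=n-1,n-2,\dots,1$, the set $\Omega\cap V_{j-1}$ is a bounded pseudoconvex open subset of the affine space $V_{j-1}\cong\mathbb{C}^{\,n-j+1}$ (with coordinates $z_j,\dots,z_n$); it satisfies $\Omega\cap V_{j-1}\subset\{|z_j|\le\mathscr{D}(\Omega)\}$ since $\Omega\cap V_{j-1}\subset\Omega$, and $(\Omega\cap V_{j-1})\cap\{z_j=0\}=\Omega\cap V_j$. Hence the classical Ohsawa--Takegoshi theorem, applied in $V_{j-1}$ across the hyperplane $\{z_j=0\}$, yields $f_{j-1}\in A^2(\Omega\cap V_{j-1})$ with $f_{j-1}|_{\{z_j=0\}}=f_j$ and
\[
\int_{\Omega\cap V_{j-1}}|f_{j-1}|^2\le C\,\mathscr{D}(\Omega)^2\int_{\Omega\cap V_j}|f_j|^2 ,
\]
where $C$ is the (dimension- and domain-independent) Ohsawa--Takegoshi constant. (If $\Omega\cap V_{j-1}$ is disconnected one runs this on each connected component and sums; only the components meeting $V_j$ carry a nonzero function.) After $n-1$ steps, $F:=f_0\in A^2(\Omega)$ restricts to $f$ on $\Omega\cap\mathcal{L}$ and
\[
\int_\Omega|F|^2\le\big(C\,\mathscr{D}(\Omega)^2\big)^{\,n-1}\int_{\Omega\cap\mathcal{L}}|f|^2 .
\]
Undoing $U$ and the translation (neither of which changes any of these $L^2$ norms) and setting $C_n:=C^{\,n-1}$ gives the claimed estimate.

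I do not expect a genuine obstacle here: the only point needing care is to invoke the codimension-one theorem in the quantitative form where the constant is of the shape $C\cdot(\sup_\Omega|z_j|)^2$ — this is precisely the dependence present in \cite{OT87} (and in the sharp versions of B\l ocki and Guan--Zhou the constant is literally $\pi$ times the square of that bound), so the product over the $n-1$ steps telescopes to $C_n\mathscr{D}(\Omega)^{2n-2}$. The remaining verifications — that $\Omega\cap V_{j-1}$ is pseudoconvex as an open subset of $V_{j-1}$, that the flag identities $V_j=V_{j-1}\cap\{z_j=0\}$ hold, and that every transversal coordinate is bounded by $\mathscr{D}(\Omega)$ once $0\in\Omega$ — are routine, and the boundary case $n=1$ is trivial with $C_1=1$.
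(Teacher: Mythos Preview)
Your argument is correct; the iterated codimension-one Ohsawa--Takegoshi extension is the standard way to obtain the line-to-domain estimate with the $\mathscr{D}(\Omega)^{2n-2}$ factor, and your normalization, flag construction, and handling of possible disconnectedness are all fine. Note, however, that the paper does not supply a proof of this theorem at all: it is stated as a special case of the Ohsawa--Takegoshi theorem and simply cited to \cite{OT87}, so there is no ``paper's own proof'' to compare against---you are providing the details the paper chose to omit.
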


As a consequence of Theorem \ref{th:OT}, we have
\begin{equation}\label{eq:OT_Bergman}
K_\Omega(z)\geq C_n^{-1}\mathscr{D}(\Omega)^{2-2n}K_{\Omega\cap\mathcal{L}}(z),\ \ \ \forall\,z\in\Omega\cap\mathcal{L}.
\end{equation}
This leads to the following

\begin{proposition}\label{prop:minimum_value_2-1}
Let $\Omega\subset\mathbb{C}^n$ be a bounded pseudoconvex domain. For each $\alpha>1$, we define
\[
\mathscr{C}_\alpha(\Omega):=\inf_{z\in\Omega}\sup_{\mathcal{L}\ni z}\frac{\mathcal{C}_l\left(\left(\mathcal{L}\cap\overline{B(z,\alpha\delta_\Omega(z))}\right)\setminus\Omega\right)}{\delta_\Omega(z)},
\]
where $\mathcal{L}$ is a complex line in $\mathbb{C}^n$. Then
\[
\kappa(\Omega)\geq\frac{C_n^{-1}\mathscr{C}_\alpha(\Omega)^2}{\pi\alpha^4\mathscr{D}(\Omega)^{2n-2}\mathscr{R}(\Omega)^2}.
\]
\end{proposition}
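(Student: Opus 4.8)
The plan is to combine the one-dimensional estimate of Theorem~\ref{th:minimum_value} with the Ohsawa--Takegoshi restriction inequality \eqref{eq:OT_Bergman}. The key observation is that for a fixed $z\in\Omega$ and a complex line $\mathcal L\ni z$, the slice $\Omega\cap\mathcal L$ is a planar domain (after an affine identification of $\mathcal L$ with $\mathbb C$), so Theorem~\ref{th:minimum_value} applies to it and gives $K_{\Omega\cap\mathcal L}(z)\ge \alpha'^2/(\pi\,\mathscr R_{L,\alpha'}(\Omega\cap\mathcal L)^2)$ for any $0<\alpha'<1$. The task is then to bound $\mathscr R_{L,\alpha'}(\Omega\cap\mathcal L)$ from above in terms of the geometric quantity $\mathscr C_\alpha(\Omega)$, and to track how the parameters match up.

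First I would fix $z\in\Omega$ and use the definition of $\mathscr C_\alpha(\Omega)$ to pick, for any $\epsilon>0$, a complex line $\mathcal L\ni z$ with
\[
\mathcal C_l\!\left(\bigl(\mathcal L\cap\overline{B(z,\alpha\delta_\Omega(z))}\bigr)\setminus\Omega\right)\ge (\mathscr C_\alpha(\Omega)-\epsilon)\,\delta_\Omega(z).
\]
Identifying $\mathcal L$ with $\mathbb C$ so that the Euclidean structures agree and $z$ corresponds to the origin, write $D:=\Omega\cap\mathcal L\subset\mathbb C$ and $s_0:=\alpha\delta_\Omega(z)$. Then $\overline{\Delta(z,s_0)}\setminus D$ has logarithmic capacity at least $(\mathscr C_\alpha(\Omega)-\epsilon)\delta_\Omega(z)=(\mathscr C_\alpha(\Omega)-\epsilon)s_0/\alpha$. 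The next step is to translate this into an upper bound for $\mathscr R_{L,\beta}(D)$ for a suitable $\beta$: if $\mathscr R_{L,\beta}(D)>s_0$ then, by the definition of $\mathscr R_{L,\beta}$, in particular for the center $z\in D$ and the radius $s=s_0$ we would get $\mathcal C_l(\overline{\Delta(z,s_0)}\setminus D)\le \beta s_0$. Choosing $\beta$ slightly below $(\mathscr C_\alpha(\Omega)-\epsilon)/\alpha$ forces a contradiction, hence $\mathscr R_{L,\beta}(D)\le s_0=\alpha\delta_\Omega(z)\le\alpha\mathscr R(\Omega)$. (One must check $\beta<1$, which holds once $\mathscr C_\alpha(\Omega)<\alpha$, i.e.\ essentially always; the borderline can be absorbed since capacities are monotone and one may shrink $\mathscr C_\alpha$ harmlessly, or note that if $\mathscr C_\alpha(\Omega)\ge\alpha$ the claimed inequality is trivially weaker.)

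With $\mathscr R_{L,\beta}(D)\le\alpha\mathscr R(\Omega)$ in hand, Theorem~\ref{th:minimum_value} applied to the planar domain $D$ yields
\[
K_{\Omega\cap\mathcal L}(z)=K_D(z)\ge\frac{\beta^2}{\pi\,\mathscr R_{L,\beta}(D)^2}\ge\frac{\beta^2}{\pi\,\alpha^2\mathscr R(\Omega)^2}.
\]
Feeding this into \eqref{eq:OT_Bergman} gives
\[
K_\Omega(z)\ge C_n^{-1}\mathscr D(\Omega)^{2-2n}\,\frac{\beta^2}{\pi\,\alpha^2\mathscr R(\Omega)^2},
\]
and letting $\epsilon\to0$ makes $\beta\to\mathscr C_\alpha(\Omega)/\alpha$, producing the factor $\mathscr C_\alpha(\Omega)^2/\alpha^4$. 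Taking the infimum over $z\in\Omega$ on the left gives $\kappa(\Omega)\ge C_n^{-1}\mathscr C_\alpha(\Omega)^2/(\pi\alpha^4\mathscr D(\Omega)^{2n-2}\mathscr R(\Omega)^2)$, as claimed.

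The main obstacle I anticipate is the bookkeeping at the interface between $\mathscr C_\alpha$ (defined with ball radius $\alpha\delta_\Omega(z)$ and normalized by $\delta_\Omega(z)$) and $\mathscr R_{L,\beta}$ (defined via the requirement $\mathcal C_l(\overline{\Delta(z,s)}\setminus\Omega)\le\beta s$ \emph{for all} $s\in(0,r)$, not just one $s$): one only has capacity information at the single scale $s=s_0$, so the deduction $\mathscr R_{L,\beta}(D)\le s_0$ must be made carefully from the "for some $s$" failure of the defining inequality, and one must be sure the normalization constants $\alpha$ vs.\ $\delta_\Omega(z)$ are handled so that the clean power $\alpha^4$ emerges. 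A secondary minor point is justifying that $D=\Omega\cap\mathcal L$, while possibly disconnected, causes no trouble: one may pass to the connected component containing $z$, for which $K_{\Omega\cap\mathcal L}(z)$ is unchanged and Theorem~\ref{th:minimum_value} still applies, and the capacity of the complement of a smaller domain only increases, which is the direction we need.
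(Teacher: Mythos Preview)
Your reduction to the slice and the use of \eqref{eq:OT_Bergman} are exactly right, but the step where you bound $\mathscr R_{L,\beta}(D)$ does not go through. The definition of $\mathscr R_{L,\beta}$ quantifies \emph{existentially} over centers: $r$ belongs to the defining set as soon as there is \emph{some} $\zeta\in D$ with $\mathcal C_l(\overline{\Delta(\zeta,s)}\setminus D)\le\beta s$ for all $s\in(0,r)$. So from $\mathscr R_{L,\beta}(D)>s_0$ you only get such a $\zeta$, not that the inequality holds for your particular center $z$; conversely, the single piece of data $\mathcal C_l(\overline{\Delta(z,s_0)}\setminus D)>\beta s_0$ says nothing about other centers $\zeta$ and therefore does not cap $\mathscr R_{L,\beta}(D)$. (Think of $D$ a long thin strip: a center $\zeta$ far from $z$ may well satisfy the $\beta$-condition up to scales much larger than $s_0$.) Thus Theorem~\ref{th:minimum_value}, applied as a black box, only yields $K_D(z)\ge \beta^2/(\pi\,\mathscr R_{L,\beta}(D)^2)$ with an $\mathscr R_{L,\beta}(D)$ you cannot control by $\alpha\delta_\Omega(z)$.

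The paper's fix is to bypass $\mathscr R_{L,\beta}$ entirely and instead extract from the \emph{proof} of Theorem~\ref{th:minimum_value} the pointwise inequality (valid for any planar domain $D$, any $z\in D$ and any $r>0$)
\[
c_D(z)\ \ge\ \frac{\mathcal C_l\!\left(\overline{\Delta(z,r)}\setminus D\right)}{r^2},
\]
obtained via the inversion $\Phi_z(\zeta)=1/(\zeta-z)$ together with \eqref{eq:Robin_constant} and the Lipschitz behaviour of capacity. Taking $D=\Omega\cap\mathcal L$, $r=\alpha\delta_\Omega(z)$, and your choice of $\mathcal L$ gives $c_{\Omega\cap\mathcal L}(z)\ge\varepsilon/(\alpha^2\delta_\Omega(z))$; then B\l ocki's $K_D(z)\ge c_D(z)^2/\pi$ and \eqref{eq:OT_Bergman} yield
\[
K_\Omega(z)\ \ge\ \frac{C_n^{-1}\varepsilon^2}{\pi\alpha^4\mathscr D(\Omega)^{2n-2}\delta_\Omega(z)^2}\ \ge\ \frac{C_n^{-1}\varepsilon^2}{\pi\alpha^4\mathscr D(\Omega)^{2n-2}\mathscr R(\Omega)^2},
\]
and letting $\varepsilon\to\mathscr C_\alpha(\Omega)$ finishes the proof with the clean $\alpha^4$. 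Your remark about passing to the connected component of $\Omega\cap\mathcal L$ containing $z$ is correct and harmless here as well.
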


\begin{proof}
We may assume $\mathscr{C}_\alpha(\Omega)>0$. For any $z\in\Omega$ and $0<\varepsilon<\mathscr{C}_\alpha(\Omega)$, there exists a complex line $\mathcal{L}\ni z$ such that
\[
\frac{\mathcal{C}_l\left(\left(\mathcal{L}\cap\overline{B(z,\alpha\delta_\Omega(z))}\right)\setminus\Omega\right)}{\delta_\Omega(z)}\geq\varepsilon.
\]
Similar as the proof of Theorem \ref{th:minimum_value}, we have
\[
c_{\Omega\cap\mathcal{L}}(z)\geq\frac{\mathcal{C}_l\left(\left(\mathcal{L}\cap\overline{B(z,\alpha\delta_\Omega(z))}\right)\setminus\Omega\right)}{\alpha^2\delta_\Omega(z)^2}\geq\frac{\varepsilon}{\alpha^2\delta_\Omega(z)},
\]
so that
\[
K_{\Omega\cap\mathcal{L}}(z)\geq\frac{c_{\Omega\cap\mathcal{L}}(z)^2}{\pi}\geq\frac{\varepsilon^2}{\pi\alpha^4\delta_\Omega(z)^2}.
\]
Thus \eqref{eq:OT_Bergman} gives
\[
K_\Omega(z)
\geq\frac{C_n^{-1}\varepsilon^2}{\pi\alpha^4\mathscr{D}(\Omega)^{2n-2}\delta_\Omega(z)^2}
\geq\frac{C_n^{-1}\varepsilon^2}{\pi\alpha^4\mathscr{D}(\Omega)^{2n-2}\mathscr{R}(\Omega)^2},\ \ \ \forall\,z\in\Omega.
\]
Since $\varepsilon$ can be arbitrarily close to $\mathscr{C}_\alpha(\Omega)$, we conclude the proof.
\end{proof}

\begin{proof}[Proof of Theorem \ref{th:minimum_value_2}]
For any $z\in\Omega$, there exists a complex line $\mathcal{L}_z\ni{z}$ such that
\[
\mathcal{C}_l\left(\left(\mathcal{L}_z\cap{B(z,2\delta_\Omega(z))}\right)\setminus\Omega\right)
\geq \varepsilon_n\mathscr{V}(\Omega)\delta_\Omega(z)
\]
(cf. \cite{Chen23}, p. 1945), where $\varepsilon_n>0$ depends only on $n$. It suffices to apply Proposition \ref{prop:minimum_value_2-1} with $\alpha=2$.
\end{proof}

\section{Appendix: Maz'ya-Shubin upper estimate for planar domains}\label{sec:appendix}

Let $\Omega\subsetneq\mathbb{C}$ be a domain. Recall that
\[
\mathscr{R}_{L,\alpha} (\Omega):=\sup\left\{r>0:{\mathcal{C}_l(\overline{\Delta(z,s)}\setminus\Omega)} \leq\alpha s,\ \text{for some }z\in\Omega\text{ and any }s\in(0,r)\right\}.
\]
Then we have the following Maz'ya-Shubin type upper estimate.

\begin{theorem}\label{th:MS_upper}
For any $0<\alpha<e^{-1/2}$, there exists a constant $C_\alpha>0$ such that $\lambda_1(\Omega)\leq C_\alpha\mathscr{R}_{L,\alpha}(\Omega)^{-2}$.
\end{theorem}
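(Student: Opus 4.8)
The plan is to bound $\lambda_1(\Omega)$ from above by the Rayleigh quotient of a test function concentrated near a point where $\mathbb{C}\setminus\Omega$ is capacitively thin, imitating Maz'ya--Shubin's construction with logarithmic capacity in place of Wiener capacity. Since $\lambda_1(\Omega)=\inf\{\int_\Omega|\nabla\phi|^2/\int_\Omega|\phi|^2:\phi\in W_0^{1,2}(\Omega)\setminus\{0\}\}$, it suffices to produce one admissible $\phi$. First I would fix $\alpha\in(0,e^{-1/2})$, set $R:=\mathscr{R}_{L,\alpha}(\Omega)$ (the cases $R=0$ being vacuous and $R=\infty$ handled by a limit below), fix $r<R$, and use the definition of $\mathscr{R}_{L,\alpha}$ to obtain $z_0\in\Omega$ with $\mathcal{C}_l(\overline{\Delta(z_0,s)}\setminus\Omega)\le\alpha s$ for all $s\in(0,r)$. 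After translating and rescaling I may assume $z_0=0$ and $r=1$; the goal then becomes to exhibit $\phi\in W_0^{1,2}(\Omega)\setminus\{0\}$ with $\int_\Omega|\nabla\phi|^2/\int_\Omega|\phi|^2\le C_\alpha$, since undoing the scaling and letting $r\uparrow R$ turns this into the asserted bound $\lambda_1(\Omega)\le C_\alpha\,\mathscr{R}_{L,\alpha}(\Omega)^{-2}$.

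For the construction I would pick radii $0<a<b<1$ (optimized in terms of $\alpha$ at the end), let $\psi$ be the radial Lipschitz bump equal to $1$ on $\Delta(0,a)$, to $0$ off $\Delta(0,b)$, and affine in $|z|$ on the annulus, so $\int|\nabla\psi|^2=\pi(a+b)/(b-a)$, and set $U:=\Delta(0,1)$, $F:=\overline{\Delta(0,b)}\setminus\Omega$. Then $F\subset\subset U$ with $d(F,\partial U)\ge1-b$ and $\mathcal{C}_l(F)\le\alpha b$ by hypothesis, so $\mathcal{C}_g(F,U)\le\alpha b/(1-b)$ by \eqref{eq:Green_log_cap}. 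Next I would take $\mu_0$ the Frostman equilibrium measure of $F$ relative to $U$ and set $w:=p_{\mu_0}/I(\mu_0)$ (and $w:=0$ if $F$ is polar); the listed properties of $p_{\mu_0}$ give $0\le w\le1$ on $U$, $w=1$ outside a polar subset of $F$, $w$ harmonic on $U\setminus F$, and $\int_U|\nabla w|^2=-2\pi/I(\mu_0)=2\pi/\log(1/\mathcal{C}_g(F,U))\le 2\pi/L$ with $L:=\log\frac{1-b}{\alpha b}>0$. The test function is $\phi:=\psi(1-w)$, extended by $0$: since $\phi$ is supported in $\overline{\Delta(0,b)}$ and vanishes polar-a.e.\ on $\overline{\Delta(0,b)}\setminus\Omega=F$ (as $w=1$ there polar-a.e.), it vanishes polar-a.e.\ on $\mathbb{C}\setminus\Omega$, hence $\phi\in W_0^{1,2}(\Omega)$; and from $|\nabla\phi|\le|\nabla\psi|+|\nabla w|$,
\[
\int_\Omega|\nabla\phi|^2\le2\int|\nabla\psi|^2+2\int_U|\nabla w|^2\le 2\pi\frac{a+b}{b-a}+\frac{4\pi}{L}=:A(\alpha).
\]

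The heart of the matter is the lower bound for $\int_\Omega|\phi|^2$, i.e.\ showing $w$ stays away from $1$ on most of $\Delta(0,a)$. I would write $p_{\mu_0}(z)=\int_U g_U(z,\zeta)\,d\mu_0(\zeta)$ and $-g_U(z,\zeta)=\log\frac{1}{|z-\zeta|}+h_\zeta(z)$ with $h_\zeta$ harmonic in $z$ and $h_\zeta(0)=0$ (valid since $|\zeta|\le b<1$), which gives for every $\zeta\in\overline{\Delta(0,b)}$
\[
\int_{\Delta(0,a)}\bigl(-g_U(\cdot,\zeta)\bigr)=\int_{\Delta(0,a)}\log\frac{1}{|z-\zeta|}\,dA(z)\le\pi a^2\Bigl(\log\tfrac1a+\tfrac12\Bigr),
\]
the right side being the maximum over $|\zeta|\le b<1$ of the logarithmic potential of $\mathbf 1_{\Delta(0,a)}$ (attained at $\zeta=0$). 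Integrating in $\mu_0$ and using $-I(\mu_0)\ge L$ yields $\int_{\Delta(0,a)}w\le\frac{\pi a^2}{L}(\log\frac1a+\frac12)$, hence $\int_{\Delta(0,a)}(1-w)\ge\pi a^2\bigl(1-\frac1L(\log\frac1a+\frac12)\bigr)$, and then Cauchy--Schwarz gives $\int_\Omega|\phi|^2\ge\int_{\Delta(0,a)}(1-w)^2\ge\pi a^2\bigl(1-\frac1L(\log\frac1a+\frac12)\bigr)^2$. This is positive exactly when $L>\log\frac1a+\frac12$, i.e.\ when $\alpha<\frac{a(1-b)}{b\,e^{1/2}}$; and since $\sup_{0<a<b<1}\frac{a(1-b)}{b\,e^{1/2}}=e^{-1/2}$ (let $a\uparrow b\downarrow0$), the hypothesis $\alpha<e^{-1/2}$ is exactly what lets me fix admissible $a,b$ with $\alpha<\frac{a(1-b)}{b\,e^{1/2}}$. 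For such $a,b$ one gets $\int_\Omega|\phi|^2\ge c(\alpha)>0$, so $\lambda_1(\Omega)\le A(\alpha)/c(\alpha)$ in the normalized picture, and the theorem follows with $C_\alpha:=A(\alpha)/c(\alpha)$ after rescaling and letting $r\uparrow R$.

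I expect the $L^2$ lower bound to be the main obstacle. Unlike the Wiener-capacity setting of Maz'ya--Shubin in dimension $\ge3$, where the ambient ball can be dilated freely, the logarithmic framework pins the containing disc $U$ to the same scale as $F$, so passing from $\mathcal{C}_l$ to the Green capacity via \eqref{eq:Green_log_cap} irreducibly costs a factor $b/(1-b)$; carrying this factor through the potential estimate above is precisely what produces, and pins down, the threshold $e^{-1/2}$. The remaining ingredients---the bump energy, the Frostman properties, and the characterization of $W_0^{1,2}(\Omega)$ by polar-a.e.\ vanishing on the complement---I expect to be routine.
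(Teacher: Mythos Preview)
Your argument is correct and follows the same Maz'ya--Shubin template as the paper: build a test function $\phi=\psi(1-w)$ where $\psi$ is a radial cutoff and $w$ is the normalized equilibrium potential of $\overline{\Delta}\setminus\Omega$, bound the Dirichlet energy from above, and bound $\int\phi^2$ from below via the explicit logarithmic potential of the uniform measure on the inner disc, arriving at the threshold $\alpha<e^{-1/2}$ in the same way.

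The technical packaging differs in three places. First, you take the containing disc $U=\Delta(0,1)$ at the same scale as the test function, invoking the hypothesis at the intermediate radius $s=b$; the paper instead passes to a large disc $\Delta(0,R)$ with $R=Nr$ and uses only the single scale $s=r$. Second, for the energy you use the crude pointwise bound $|\nabla\phi|\le|\nabla\psi|+|\nabla w|$, picking up the extra term $2\pi/L$; the paper performs an integration by parts that makes the cross terms and $|\nabla w|^2$ cancel exactly, leaving only the bump energy $4\pi r^2/(r_2-r_1)^2$. Third, you justify $\phi\in W_0^{1,2}(\Omega)$ by the quasi-everywhere vanishing characterization (so your $\phi$ need not be Lipschitz), whereas the paper first fattens the compact set via Choquet's theorem so that $\chi_\varepsilon\equiv1$ on a full neighborhood of $\overline{\Delta(0,r)}\setminus\Omega$ and hence $\phi\in\mathrm{Lip}_0(\Omega)$. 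Your route is a bit shorter and avoids the auxiliary $\varepsilon$-approximation and the large parameter $R$, at the cost of invoking the (standard but nontrivial) fine characterization of $W_0^{1,2}$; the paper's route is more self-contained and yields a slightly sharper energy bound, though this sharpness is not needed for the qualitative statement.
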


\begin{proof}
Recall that
\begin{equation}\label{eq:Poincare}
\int_\Omega|\phi|^2\leq \lambda_1(\Omega)^{-1}\int_\Omega|\nabla \phi|^2,\ \ \ \forall\,\phi\in{\mathrm{Lip}_0(\Omega)},
\end{equation}
where $\mathrm{Lip}_0(\Omega)$ denotes the set of Lipschitz continuous functions with a compact support in $\Omega$. Given $0<r<\mathscr{R}_{L,\alpha}(\Omega)$, there exists some $z_0\in\Omega$ with $\mathcal{C}_l(\overline{\Delta(z_0,r)}\setminus\Omega)\leq\alpha r$. For simplicity, we may assume $z_0=0$. For any $\varepsilon>0$, we take a neighbourhood $V_\varepsilon$ of $\overline{\Delta(0,r)}\setminus\Omega$ with $\overline{V}_\varepsilon\subset\Delta(0,2r)$ and $\mathcal{C}_l(\overline{V}_\varepsilon)\leq (1+\varepsilon)\alpha r$, in view of Choquet's theorem. For $R>2r$, the Green capacity $\mathcal{C}_g(\overline{V}_\varepsilon,\Delta(0,R))$ satisfies
\[
\mathcal{C}_g(\overline{V}_\varepsilon,\Delta(0,R))\leq\frac{\mathcal{C}_l(\overline{V}_\varepsilon)}{R-2r}\leq \frac{(1+\varepsilon)\alpha r}{R-2r}.
\]
Let $\mu_\varepsilon$ and $p_{\mu_\varepsilon}$ be the corresponding equilibrium measure and Green potential, respectively. It follows that
\begin{equation}\label{eq:Green_potential}
I(\mu_\varepsilon)\leq-\log\frac{R-2r}{(1+\varepsilon)\alpha r}
\end{equation}
provided $0<\alpha<1$ and $0<\varepsilon\ll 1$. Set
\[
\chi_\varepsilon=\frac{p_{\mu_\varepsilon}}{I(\mu_\varepsilon)},
\]
which satisfies $\chi_\varepsilon=1$ on $V_\varepsilon$, $0\leq\chi_\varepsilon\leq1$ on $\Delta(0,R)$, and $\chi_\varepsilon=0$ on $\partial\Delta(0,R)$. Let $0<r_1<r_2<r$ and $\eta$ be a cut-off function with $\eta=1$ on $\Delta(0,r_1)$, $\eta=0$ outside $\Delta(0,r_2)$ and $|\nabla\eta|\leq 2(r_2-r_1)^{-1}$ on $\overline{\Delta(0,r_2)}\setminus\Delta(0,r_1)$. Then
\[
\phi:=(1-\chi_\varepsilon)\eta\in\mathrm{Lip_0(\Omega)}.
\]
Since $\chi_\varepsilon$ is a harmonic function when $0<\chi_\varepsilon<1$, we infer from Green's formula that
\begin{eqnarray*}
\int_{\Delta(0,r)}|\nabla \phi|^2
&=& \int_{\Delta(0,r)}\left((1-\chi_\varepsilon)^2|\nabla\eta|^2-2(1-\chi_\varepsilon)\eta\nabla\chi_\varepsilon\cdot\nabla\eta+\eta^2|\nabla\chi_\varepsilon|^2\right)\\
&=& \int_{\Delta(0,r)}\left((1-\chi_\varepsilon)^2|\nabla\eta|^2-(1-\chi_\varepsilon)\nabla\chi_\varepsilon\cdot\nabla\eta^2+\eta^2|\nabla\chi_\varepsilon|^2\right)\\
&=& \int_{\Delta(0,r)}\left((1-\chi_\varepsilon)^2|\nabla\eta|^2+\eta^2\nabla\left((1-\chi_\varepsilon)\nabla\chi_\varepsilon\right)+\eta^2|\nabla\chi_\varepsilon|^2\right)\\
&=& \int_{\Delta(0,r)}(1-\chi_\varepsilon)^2|\nabla\eta|^2\\
&\leq& \frac{4\pi r^2}{(r_2-r_1)^2}.
\end{eqnarray*}
On the other hand, we have
\[
\int_{\Delta(0,r)}|\phi|^2\geq\int_{\Delta(0,r_1)}(1-\chi_\varepsilon)^2\geq\frac{1}{\pi r_1^2}\left(\int_{\Delta(0,r_1)}(1-\chi_\varepsilon)\right)^2
\]
in view of the Cauchy-Schwarz inequality. These combined with \eqref{eq:Poincare} yield
\begin{equation}\label{eq:lambda_1}
\left(\pi r_1^2 - \int_{\Delta(0,r_1)}\chi_\varepsilon\right)^2\leq \frac{4\pi^2r^2r_1^2}{(r_2-r_1)^2}\lambda_1(\Omega)^{-1}.
\end{equation}

It remains to find an upper bound for
\begin{eqnarray*}
\int_{\Delta(0,r_1)}\chi_\varepsilon
&=& \frac{1}{I(\mu_\varepsilon)}\int_{z\in\Delta(0,r_1)}\int_{w\in\Delta(z_0,R)}g_{\Delta(0,R)}(z,w)d\mu_\varepsilon(w)\\
&=& \frac{1}{I(\mu_\varepsilon)}\int_{w\in\Delta(0,R)}\left(\int_{z\in\Delta(0,r_1)}g_{\Delta(0,R)}(z,w)\right)d\mu_\varepsilon(w)
\end{eqnarray*}
Given $0<s<r_1$, define
\[
F(s):=\int_{z\in\partial\Delta(0,s)}g_{\Delta(0,R)}(z,w)|dz|=s\int^{2\pi}_0g_{\Delta(0,R)}(se^{i\theta},w)d\theta.
\]
Note that
\[
g_{\Delta(0,R)}(z,w)=\log\frac{R|z-w|}{|R^2-z\overline{w}|},
\]
and
\[
\Delta_z g_{\Delta(0,R)}(z,w)=2\pi\delta_w,
\]
where $\Delta_z$ denotes the Laplacian operator with respect to the variable $z$ and $\delta_w$ is the Dirac measure at $w$. The Poisson-Jensen formula states that
\[
u(0)=\frac{1}{2\pi}\int^{2\pi}_0 u(se^{i\theta})d\theta + \frac{1}{2\pi}\int_{\zeta\in\Delta(0,s)}\log\frac{|\zeta|}{s}\Delta u
\]
holds if $u$ is a subharmonic function on $\overline{\Delta(0,s)}$. Take $u=g_{\Delta(0,R)}(\cdot,w)$. If $|w|>s$, then $\Delta u=0$ on $\Delta(0,s)$, so that
\[
F(s)=2\pi{s} u(0)=2\pi{s} \log\frac{|w|}{R}.
\]
If $|w|<s$, then $\Delta u=2\pi\delta_w$ on $\Delta(0,s)$, and hence
\[
F(s)=2\pi{s} u(0) - 2\pi{s}\log\frac{|w|}{s}
=2\pi{s} \log\frac{s}{R}.
\]
As a consequence, if $|w|>r_1$, then
\begin{eqnarray}\label{eq:lower_1}
\int_{z\in\Delta(0,r_1)}g_{\Delta(z_0,R)}(z,w)
&=& \int^{r_1}_0F(s)ds = \int^{r_1}_0 2\pi{s} \log\frac{|w|}{R}ds\notag\\
&\geq& -\log\frac{R}{r_1}\int^{r_1}_0 2\pi{s} ds;
\end{eqnarray}
while if $|w|\leq r_1$, then
\begin{eqnarray*}
\int_{z\in\Delta(0,r_1)}g_{\Delta(0,R)}(z,w)
&=& \int^{r_1}_0F(s)ds\\
&=& \int^{|w|}_0 2\pi{s} \log\frac{|w|}{R}ds + \int^{r_1}_{|w|}2\pi{s} \log\frac{s}{R}ds.
\end{eqnarray*}
Set
\[
\varphi(t):=\int^{t}_0 2\pi{s} \log\frac{t}{R}ds + \int^{r_1}_{t}2\pi{s} \log\frac{s}{R}ds.
\]
Since
\[
\varphi'(t)=\frac{1}{t}\int^t_02\pi{s}ds\geq0,
\]
we have
\[
\varphi(0)\leq\varphi(r_1)=-\log\frac{R}{r_1}\int^{r_1}_0 2\pi{s} ds.
\]
This combined with \eqref{eq:lower_1} gives
\begin{eqnarray}\label{eq:lower_2}
\int_{z\in\Delta(0,r_1)}g_{\Delta(0,R)}(z,w)
&\geq& \varphi(0) = \int^{r_1}_0 2\pi{s} \log\frac{s}{R}ds\notag\\
&=& -\pi{r_1^2}\left(\frac{1}{2}+\log\frac{R}{r_1}\right),\ \ \ \forall\,w\in\Delta(0,R).
\end{eqnarray}
Since $\mu_\varepsilon$ is a probability measure and $I(\mu_\varepsilon)<0$, we have
\[
\int_{\Delta(0,r_1)}\chi_\varepsilon
\leq -\frac{\pi{r_1^2}}{I(\mu_\varepsilon)}\left(\frac{1}{2}+\log\frac{R}{r_1}\right)
\leq  \pi{r_1^2}\frac{\frac{1}{2}+\log\frac{R}{r_1}}{\log\frac{R-2r}{(1+\varepsilon)\alpha r}}
\]
in view of \eqref{eq:Green_potential}. Therefore, if $0<\alpha<e^{-1/2}$, we can choose $\varepsilon$ so small that $e^{1/2}(1+3\varepsilon)\alpha<1$. If we set $r_1=e^{1/2}(1+2\varepsilon)\alpha r$ and $r_2=e^{1/2}(1+3\varepsilon)\alpha r$, then there exists $R=Nr$ with $N\gg1$ such that
\[
\frac{1}{2}+\log\frac{R}{r_1}=\log\frac{e^{1/2}R}{r_1}=\log\frac{N}{(1+2\varepsilon)\alpha }<\log\frac{N-2}{(1+\varepsilon)\alpha }=\log\frac{R-2r}{(1+\varepsilon)\alpha r}.
\]
Choose certain $\varepsilon$ and $N=R/r$ depending only on $\alpha$, we obtain
\[
\int_{\Delta(0,r_1)}\chi_\varepsilon \leq c_\alpha \pi r_1^2
\]
for some $0<c_\alpha<1$, so that \eqref{eq:lambda_1} gives
\[
\lambda_1(\Omega)\leq\frac{4r^2}{(1-c_\alpha)^2r_1^2(r_2-r_1)^2} =\frac{C_\alpha}{r^2}
\]
for some $C_\alpha>0$. It suffices to let $r\rightarrow\mathscr{R}_{L,\alpha}(\Omega)$.
\end{proof}

\end{document}